\newcolumntype{"}{@{\hskip\tabcolsep\vrule width 1.5pt\hskip\tabcolsep}}
\newcommand*\patchAmsMathEnvironmentForLineno[1]{%
\expandafter\let\csname old#1\expandafter\endcsname\csname #1\endcsname  \expandafter\let\csname oldend#1\expandafter\endcsname\csname end#1\endcsname  \renewenvironment{#1}%
{\linenomath\csname old#1\endcsname}%
{\csname oldend#1\endcsname\endlinenomath}}%
\newcommand*\patchBothAmsMathEnvironmentsForLineno[1]{%
\patchAmsMathEnvironmentForLineno{#1}%
\patchAmsMathEnvironmentForLineno{#1*}}%
\def\N{\mathbb{N}}
\def\D{\Delta}
\def\rsq{\hspace*{\fill}$\blacksquare$\medskip}
\newtheorem{theorem}{Theorem}[section]
\newtheorem{lemma}[theorem]{Lemma}%[section]
\newtheorem{corollary}[theorem]{Corollary}%[section]
\newtheorem{conjecture}{Conjecture}[section]
\numberwithin{equation}{section}
\newtheoremstyle{example}%name
  {10pt}          % space above
  {10pt}  % space below
  {\rm}  % bofy font
  {}%{\parindent}     % ident - empty=no indent,  \parindent= paragraph indent
  {\bf}  % thm head font
  {: }    % punctuation after thm head
  { }    % space after thm head: `` ``=normal \newline=linebreak
  {}     % thm head specification
\theoremstyle{example}
\newtheorem{example}{Example}[section]
\def\N{\mathbb{N}}
\newtheorem{defi}{Definition}[section]
\newcommand{\T}[1]{\fontsize{#1}{#1}\selectfont}
\def\ms{\medskip}
\def\nt{\noindent}
\begin{document}

\begin{center}
{\mathversion{bold}\Large \bf On Local Antimagic Chromatic Number of Spider Graphs}

\bigskip
{\large  Gee-Choon Lau$^{a,}$\footnote{Corresponding author.}, Wai-Chee Shiu{$^{b,c}$}, Chee-Xian Soo$^d$}\\

\medskip

\emph{{$^a$}Faculty of Computer \& Mathematical Sciences,}\\
\emph{Universiti Teknologi MARA (Segamat Campus),}\\
\emph{85000, Johor, Malaysia.}\\
\emph{geeclau@yahoo.com}\\

\medskip

\emph{{$^b$}Department of Mathematics, The Chinese University of Hong Kong,\\ Shatin, Hong Kong.}\\
\emph{{$^c$}College of Global Talents, Beijing Institute of Technology,\\ Zhuhai, China.}\\
\emph{wcshiu@associate.hkbu.edu.hk}\\

\medskip
\emph{{$^d$}School of Physical and Mathematical Sciences,\\Nanyang Technological University, 21 Nanyang Link,\\ 637731, Singapore.}\\
\emph{csoo002@e.ntu.edu.sg}\\

\end{center}

%{\blue Shiu} {\purple Shiu2} \quad {\red Lau}\quad {\violet Ng}\quad {\purple Li}\\
%{\magenta Question} \quad {\cyan Maybe no use}

\begin{abstract}
An edge labeling of a connected graph $G = (V,E)$ is said to be local antimagic if it is a bijection $f : E \to \{1, . . . , |E|\}$ such that for any pair of adjacent vertices $x$ and $y$, $f^+(x) \ne f^+(y)$, where the induced vertex label $f^+(x) = \sum f(e)$, with $e$ ranging over all the edges incident to $x$. The local antimagic chromatic number of $G$, denoted by $\chi_{la}(G)$, is the minimum number of distinct induced vertex labels over all local antimagic labelings of $G$. In this paper, we first show that a $d$-leg spider graph has $d+1\le \chi_{la}\le d+2$. We then obtain many sufficient conditions such that both the values are attainable. Finally, we show that each 3-leg spider has  $\chi_{la} = 4$ if not all legs are of odd length. We conjecture that almost all $d$-leg spiders of size $q$ that satisfies $d(d+1) \le 2(2q-1)$ with each leg length at least 2 has $\chi_{la} = d+1$. \\

\noindent Keywords: Local antimagic labeling, Local antimagic chromatic number, Spiders

\noindent 2010 AMS Subject Classifications: 05C78; 05C69.
\end{abstract}

\section{Introduction}

A connected graph $G = (V, E)$ is said to be local antimagic if it admits a local antimagic (edge) labeling, i.e., a bijection $f : E \to \{1,\ldots ,|E|\}$ such that the induced vertex labeling  $f^+ : V \to \mathbb Z$ given by $f^+(x) = \sum f(e)$ (with $e$ ranging over all the edges incident to $x$) has the property that any two adjacent vertices have distinct induced vertex labels.  The number of distinct induced vertex labels under $f$ is denoted by $c(f)$, and is called the {\it color number} of $f$. Also, $f$ is call a {\it local antimagic $c(f)$-labeling} of $G$. The {\it local antimagic chromatic number} of $G$, denoted by $\chi_{la}(G)$, is $\min\{c(f) : f\mbox{ is a local antimagic labeling of } G\}$ (see~\cite{Arumugam-Wang, Arumugam, LNS-GC, LauNgShiu-DMGT, LSN, LauShiuNg-pendants}). For integers $a<b$, we let $[a,b]=\{a,a+1,a+2,\ldots,b\}$.

The following two results in~\cite{LSN} are needed.

\begin{lemma}\label{lem-pendant} Let $G$ be a graph of size $q$ containing $d$ pendants. Let $f$ be a local antimagic labeling of $G$ such that $f(e)=q$. If $e$ is not a pendant edge, then $c(f)\ge d+2$.\end{lemma}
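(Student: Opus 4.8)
The plan is to split the required $d+2$ colors into two groups: $d$ colors contributed by the pendant vertices, and $2$ further colors contributed by the endpoints of the edge $e$ carrying the largest label $q$. The governing idea is that placing the maximum label $q$ on a non-pendant edge forces both of its endpoints to receive induced labels strictly larger than $q$, while every pendant is capped below $q$, so the two families of colors cannot overlap.

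First I would record the behavior of the pendants. Each pendant vertex $v$ is incident to a unique edge $e_v$, so $f^+(v)=f(e_v)$. Since $G$ is connected and possesses a non-pendant edge, it is not $K_2$, so distinct pendants determine distinct pendant edges; as $f$ is a bijection onto $[1,q]$, the $d$ induced labels $f^+(v)$ are pairwise distinct. Crucially, because $f(e)=q$ and $e$ is not a pendant edge, no pendant edge carries the label $q$, so every pendant induced label is at most $q-1$.

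Next I would examine the endpoints $x,y$ of $e$. Since $e$ is not a pendant edge, both $x$ and $y$ have degree at least $2$, hence each is incident to $e$ together with at least one further edge of positive label. Therefore $f^+(x)\ge q+1$ and $f^+(y)\ge q+1$, so both exceed every pendant induced label. Being adjacent via $e$, the vertices $x$ and $y$ satisfy $f^+(x)\ne f^+(y)$ by the definition of a local antimagic labeling. Hence $\{f^+(x),f^+(y)\}$ supplies two colors that are distinct from each other and from all $d$ pendant colors, giving $c(f)\ge d+2$.

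There is no substantial obstacle here beyond bookkeeping: the only points needing care are the degree bound $\deg(x),\deg(y)\ge 2$ (which is exactly what ``$e$ is not a pendant edge'' guarantees) and the strict separation $q-1<q+1$ that keeps the two color groups disjoint. The degenerate case $G=K_2$ is automatically excluded, since there the sole edge would be a pendant edge, contradicting the hypothesis $f(e)=q$ with $e$ non-pendant.
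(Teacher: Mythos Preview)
Your proof is correct. Note, however, that the paper does not actually prove this lemma: it is quoted from reference~\cite{LSN} as one of two preliminary results, so there is no ``paper's own proof'' to compare against. Your argument is the natural one and almost certainly the intended proof: the $d$ pendant vertices receive $d$ pairwise distinct induced labels (each equal to its unique incident edge label), all at most $q-1$ since the label $q$ sits on a non-pendant edge; meanwhile both endpoints of $e$ have degree at least $2$, forcing their induced labels to be at least $q+1$, and adjacency makes these two labels distinct. The bookkeeping you flag (distinct pendants have distinct pendant edges, and the $K_2$ case is vacuous) is handled correctly.
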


\begin{theorem}\label{thm-pendant}  Let $G$ be a graph having $d$ pendants. If $G$ is not $K_2$, then $\chi_{la}(G)\ge d+1$ and the bound is sharp.\end{theorem}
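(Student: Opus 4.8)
The plan is to prove the two assertions separately: first the lower bound $\chi_{la}(G)\ge d+1$ for every connected $G\ne K_2$ with $d$ pendants, and then to exhibit a family attaining equality. Throughout, write $q=|E(G)|$ and fix an arbitrary local antimagic labeling $f:E\to[1,q]$; the goal is to bound $c(f)$ from below by $d+1$. The starting observation is that each pendant $v$ is incident with a single edge $e_v$, so $f^+(v)=f(e_v)$. Since $f$ is a bijection, the $d$ pendant edges carry $d$ distinct labels, and hence the $d$ pendants already contribute $d$ distinct induced colors, which I will call the \emph{pendant colors}. It then remains only to locate one further color distinct from all of these.

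The key step is the following strict inequality at support vertices. Because $G$ is connected and $G\ne K_2$, every support vertex (a vertex adjacent to a pendant) has degree at least $2$: if a support $u$ of a pendant $v$ had degree $1$, its only edge would be $uv$, forcing $\{u,v\}$ to be a whole component and hence $G=K_2$. Consequently, for any pendant $v$ with support $u$, I have $f^+(u)=f(e_v)+\sum_{e\ni u,\,e\ne e_v}f(e)>f(e_v)=f^+(v)$, the inequality being strict because at least one further positive label is added. Now set $M=\max_{x\in V}f^+(x)$. Applying the inequality to each pendant gives $M\ge f^+(u)>f^+(v)$ for every pendant $v$, so $M$ strictly exceeds all $d$ pendant colors and is therefore a color different from each of them. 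Hence $c(f)\ge d+1$, and as $f$ was arbitrary, $\chi_{la}(G)\ge d+1$.

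For sharpness I would take the stars $K_{1,d}$ with $d\ge2$ — equivalently, the $d$-leg spiders all of whose legs have length $1$ — which have exactly $d$ pendants and are not $K_2$. Labeling the $d$ edges by $1,2,\dots,d$ assigns the leaves the distinct colors $1,\dots,d$ and assigns the center the color $1+2+\cdots+d=\tfrac{d(d+1)}{2}$; since $\tfrac{d(d+1)}{2}>d$ for all $d\ge2$, this is a valid local antimagic labeling using exactly $d+1$ colors. Combined with the lower bound just established, this yields $\chi_{la}(K_{1,d})=d+1$, so the bound $d+1$ is attained.

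The argument is short, and the one delicate point — the precise place where the hypothesis $G\ne K_2$ is used — is the strict inequality $f^+(u)>f^+(v)$ at a support, which rests entirely on every support having degree at least $2$. This is exactly what fails for $K_2$: there both endpoints of the single edge are pendants receiving the same induced value, so $K_2$ is not even local antimagic. I do not expect any genuine obstacle beyond this; in particular, a single vertex may serve as the support of several pendants without disturbing the count, since the per-pendant inequality (and hence the conclusion $M>$ every pendant color) holds regardless.
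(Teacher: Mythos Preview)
Your argument is correct. Note, however, that the paper does not actually prove this theorem: it is quoted (together with Lemma~\ref{lem-pendant}) from the reference~\cite{LSN} as a preliminary result, so there is no ``paper's own proof'' here to compare against. Your approach --- observing that the $d$ pendant edges force $d$ distinct induced labels at the leaves, and that the strict inequality $f^+(u)>f^+(v)$ at any support (valid because connectedness and $G\ne K_2$ force $\deg(u)\ge 2$) supplies a $(d+1)$-st colour via the global maximum --- is the standard short proof and is entirely sound. Your sharpness example $K_{1,d}=Sp(1^{[d]})$ is exactly the one the paper itself cites in the introduction when it records $\chi_{la}(Sp(1^{[d]}))=d+1$.
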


For $1\le i\le t$, $a_i, n_i\ge 1$ and $d=\sum\limits^t_{i=1} n_i\ge 3$, a spider of $d$ legs, denoted $Sp(a_1^{[n_1]}, a_2^{[n_2]}, \ldots, a_t^{[n_t]})$ (or $Sp(y_1,y_2,\ldots,y_d)$ for convenience), is a tree formed by identifying an end-vertex of $n_i$ path(s) of length $a_i$. The vertex $u$ of degree $d$ is the core of the spider. Note that, $Sp(1^{[d]})$ is the star graph of $d$ pendant vertices with $\chi_{la}(Sp(1^{[d]})) = d+1$.
%For $y_1,y_2,\ldots,y_d\ge 1$ and $d\ge 3$, a $d$-leg spider graph, denoted $Sp(y_1,y_2,\ldots,y_d)$, is the vertex-gluing of the paths $P_{{y_i}+1}, 1\le i\le d$, at an end-vertex. 
In this paper, we first show that $d+1\le \chi_{la}(Sp(y_1,y_2,\ldots,y_d))\le d+2$. We then obtain many sufficient conditions such that both the values are attainable. Finally, we show that $\chi_{la}(Sp(y_1,y_2,y_3)) = 4$ if not all $y_1,y_2,y_3$ are odd. We conjecture that each $d$-leg spider of size $q$ that satisfies $d(d+1) \le 2(2q-1)$ with each leg length at least 2 has $\chi_{la} = d+1$ except $Sp(2^{[n]},3^{[m]})$ for $(n,m)\in\{(4,0), (5,0), (6,0), (0,10), (1,8), (1,9), (2,7), (2,8), (3,5), (3,6), (4,4), (4,5), (5,3)\}$.    %$\chi_{la}(Sp(y_1,y_2,\ldots,y_d)) = d+1$ if $\sum^{d}_{i=1}y_i\ge \frac{1}{2}({d+1\choose2}+1)$ edges with $y_1,\ldots,y_d\ge 2$. 

\section{Spider Graphs}

In \cite{ LauShiuNg-pendants}, the authors gave a family of $d$-leg spiders to have $\chi_{la} > d+1$.

\begin{theorem}\label{thm-sp2} For $d\ge 3$, $$\chi_{la}(Sp(2^{[d]}))=\begin{cases}d+2 &\mbox{ if } d \ge 4 \\ d+1 & \mbox{ if }d=3. \end{cases}$$
\end{theorem}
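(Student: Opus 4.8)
The plan is to combine the two quoted results with a short extremal argument, treating $d\ge 4$ and $d=3$ separately. Throughout, write $q=2d$ for the size, let $u$ be the core, and for each leg $i$ let $x_i$ be the degree-$2$ vertex and $z_i$ the leaf, so that the edges are $e_i=ux_i$ and $g_i=x_iz_i$ for $1\le i\le d$. For any local antimagic labeling $f$ the induced labels are $f^+(z_i)=f(g_i)$, $f^+(x_i)=f(e_i)+f(g_i)$, and $f^+(u)=\sum_{i=1}^d f(e_i)$. Since $f$ is a bijection the $d$ values $f(g_i)$ are distinct, so the $d$ leaves already carry $d$ distinct colors; call this set $C$ and note $\max C\le q=2d$. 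The lower bound $\chi_{la}\ge d+1$ is immediate from Theorem~\ref{thm-pendant}, and the upper bound $\chi_{la}\le d+2$ I would quote from the general spider bound established in this paper.

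For $d\ge 4$ I would show every local antimagic labeling $f$ satisfies $c(f)\ge d+2$, which together with $\chi_{la}\le d+2$ gives equality. If the edge labeled $q=2d$ is not a pendant edge, then Lemma~\ref{lem-pendant} yields $c(f)\ge d+2$ directly. Otherwise $f(g_1)=2d$, say. Then $f^+(x_1)=f(e_1)+2d\ge 2d+1>\max C$, while $f^+(u)=\sum_i f(e_i)\ge 1+2+\cdots+d=\tfrac{d(d+1)}{2}>2d\ge\max C$, the last inequality being exactly where the hypothesis $d\ge 4$ enters. Hence both $f^+(u)$ and $f^+(x_1)$ lie strictly above every leaf color, so neither belongs to $C$; and since $u$ and $x_1$ are adjacent they differ from each other. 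Thus $f$ uses the $d$ colors of $C$ together with at least two further distinct colors, giving $c(f)\ge d+2$ in both cases.

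For $d=3$ this counting breaks down precisely because $\tfrac{d(d+1)}{2}=6=2d$, so $f^+(u)$ is no longer forced above $\max C$ and may coincide with a leaf color. To realize $\chi_{la}=4$ I would exhibit one explicit local antimagic $4$-labeling: set $f(e_1)=1,\ f(g_1)=4$; $f(e_2)=2,\ f(g_2)=6$; $f(e_3)=3,\ f(g_3)=5$. A direct check gives leaf colors $4,6,5$, middle colors $5,8,8$, and core color $6$, so exactly the four colors $\{4,5,6,8\}$ occur and every adjacent pair differs. With the lower bound $\chi_{la}\ge d+1=4$, this settles $d=3$.

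The substantive step is the lower bound for $d\ge 4$, and the key realization is that forcing the label $2d$ onto a pendant edge (which Lemma~\ref{lem-pendant} compels once one hopes for only $d+1$ colors) does not help: it unavoidably creates two ``oversized'' colors, one at $u$ and one at the middle vertex of that leg. Turning the inequality $\tfrac{d(d+1)}{2}>2d$ into the exact pivot both proves the claim for $d\ge 4$ and explains, rather than merely asserts, why $d=3$ is exceptional. The only remaining routine ingredient is the generic bound $\chi_{la}(Sp(y_1,\ldots,y_d))\le d+2$, which also supplies a $(d+2)$-coloring for the $d\ge 4$ case.
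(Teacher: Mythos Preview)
Your proof is correct. Note that the paper does not actually prove Theorem~\ref{thm-sp2}; it is quoted from~\cite{LauShiuNg-pendants}, so there is no in-paper argument to compare against directly.

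That said, your lower-bound argument for $d\ge 4$ is sharper than the paper's own general machinery. The criterion of Theorem~\ref{thm-maxdeg} (equivalently Theorem~\ref{thm-legnum}) specializes here to $d(d+1)>2(4d-1)$, i.e.\ $d^2-7d+2>0$, which only holds for $d\ge 7$; the paper even flags $Sp(2^{[n]})$, $n=4,5,6$, as cases that criterion misses. Your direct observation---that once the label $2d$ sits on a pendant edge, both the core label $\sum f(e_i)\ge d(d+1)/2>2d$ and the adjacent middle label $f(e_1)+2d\ge 2d+1$ strictly exceed every leaf label and must differ from each other---gives a uniform treatment of all $d\ge 4$ with no case splits. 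The upper bound you invoke is Corollary~\ref{cor-spider}, whose proof (via the fundamental labeling of Theorem~\ref{thm-leglen1}) is logically independent of Theorem~\ref{thm-sp2}, so there is no circularity despite its later placement in the paper. Your explicit $4$-labeling for $d=3$ checks out.
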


\begin{theorem}\label{thm-maxdeg}
Let $G$ be a graph of size $q$ with $k\ge 1$ pendant vertices. Suppose $G$ has only one vertex of maximum degree $\D$ which is not adjacent to any pendant vertex and all other  vertices of $G$ has degree at most  $m<\D$. If $\D(\D+1) > m(2q-m+1)$, then $\chi_{la}(G)\ge k+2$.
\end{theorem}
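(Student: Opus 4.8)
The plan is to prove the statement directly by showing that \emph{every} local antimagic labeling $f$ of $G$ satisfies $c(f)\ge k+2$. Since Theorem~\ref{thm-pendant} already guarantees $\chi_{la}(G)\ge k+1$, the real task is, for an arbitrary $f$, to exhibit one color beyond the $k$ colors forced by the pendants and beyond a single color that I will attach to the unique maximum-degree vertex $v$. The driving observation is that the hypothesis $\Delta(\Delta+1)>m(2q-m+1)$ is precisely a comparison between the smallest induced label that $v$ can receive and the largest induced label that any other vertex can receive.

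First I would record two extremal estimates. The $\Delta$ edges at $v$ carry distinct labels from $[1,q]$, so $f^+(v)\ge 1+2+\cdots+\Delta=\frac{\Delta(\Delta+1)}{2}$. For any vertex $w\ne v$ we have $\deg(w)\le m$, and since the map $d\mapsto q+(q-1)+\cdots+(q-d+1)$ is increasing for $1\le d\le q$, summing the $\deg(w)$ largest available labels gives $f^+(w)\le \frac{m(2q-m+1)}{2}$. The hypothesis then reads $\frac{\Delta(\Delta+1)}{2}>\frac{m(2q-m+1)}{2}$, so $f^+(v)>f^+(w)$ for every $w\ne v$; in particular $f^+(v)$ is a color used by no other vertex, and since a short check gives $\frac{m(2q-m+1)}{2}\ge q$ for $1\le m\le q$, one also obtains $f^+(v)>q$.

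Next I would count the colors. The $k$ pendant vertices receive the $k$ distinct labels of their pendant edges, all lying in $[1,q]$ and hence all strictly below $f^+(v)$; together with $f^+(v)$ this already furnishes $k+1$ distinct colors. To produce the last one I examine the edge $e$ with $f(e)=q$. If $e$ is not a pendant edge, Lemma~\ref{lem-pendant} immediately yields $c(f)\ge k+2$. Otherwise $e=xy$ with $x$ a pendant and $y$ its support; as $G\ne K_2$ we have $\deg(y)\ge 2$, so $f^+(y)\ge q+1>q$, placing $f^+(y)$ above every pendant color yet strictly below $f^+(v)$ by the estimates above. Crucially $v$ is not adjacent to any pendant, so $y\ne v$, and the three blocks---the $k$ pendant colors, the color $f^+(y)$, and the top color $f^+(v)$---are pairwise distinct, giving $c(f)\ge k+2$.

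The main obstacle is the second step: making the extremal bound on $f^+(w)$ for $w\ne v$ sharp enough that the stated inequality forces strict dominance of $f^+(v)$. This rests on the monotonicity of the ``sum of the $d$ largest labels'' and on the fact that no other vertex can reuse the labels already committed to the $\Delta$ edges at $v$. Once that strict maximality is secured, the remainder is the short case split on the location of the edge labeled $q$, in which the hypothesis that $v$ avoids all pendants is exactly what keeps the auxiliary vertex $y$ distinct from $v$.
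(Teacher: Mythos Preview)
Your argument is correct and follows essentially the same route as the paper: split on whether the edge labeled $q$ is a pendant edge, invoke Lemma~\ref{lem-pendant} in the non-pendant case, and in the pendant case use the bounds $f^+(v)\ge\frac{\Delta(\Delta+1)}{2}$ and $f^+(y)\le\frac{m(2q-m+1)}{2}$ together with the hypothesis that $v$ is not adjacent to a pendant to separate $f^+(v)$, $f^+(y)$, and the $k$ pendant colors. One small remark: your closing sentence about other vertices not being able to ``reuse the labels already committed to the $\Delta$ edges at $v$'' is not quite right (a neighbor of $v$ does share such a label), but your actual bound on $f^+(w)$ does not rely on this and stands as written.
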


\begin{proof}
Let $f$ be a local antimagic labeling of $G$.  If $q$ is assigned to a non-pendant edge, by Lemma~\ref{lem-pendant}, $c(f)\ge m+2$. Assume $q$ is assigned to a pendant edge that has a non-pendant end-vertex $x$. Now, all the $k$ pendant vertex labels of $G$ are distinct and at most $q$. Suppose $u$ is a vertex of degree $\D$, then $f^+(u)\ge \D(\D+1)/2$ and that $q+1\le f^+(x)\le m(2q-m+1)/2$. By the given hypothesis, $f^+(u)>f^+(x) > f^+(y)$ for every pendant vertex $y$.  Thus, $c(f)\ge k+2$. The theorem holds.
\end{proof}

\nt Note that if $G$ has at least two vertices of maximum degree $\D$, the condition $\D(\D+1) > \D(2q-\D+1)$ implies that $\D > q$, which is impossible. Also note that the conditions of Theorem~\ref{thm-maxdeg} is not necessary to have $\chi_{la}(G)\ge k+2$. A counterexample of non-tree graph is $\chi_{la}(K_3 \odot O_2)=9$, and counterexamples of trees are $Sp(2^{[n]}), n = 4,5,6$.

\begin{theorem}\label{thm-leglen1} A spider of $d \ge 3$ legs that has at least a leg of length $1$ has $\chi_{la} = d+1$. \end{theorem}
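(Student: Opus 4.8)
The lower bound is immediate and free of content: a spider with $d$ legs has exactly $d$ leaves (one at the far end of each leg), and for $d\ge 3$ it is not $K_2$, so Theorem~\ref{thm-pendant} gives $\chi_{la}\ge d+1$. The whole problem is therefore the upper bound, i.e. exhibiting one local antimagic labeling $f$ with $c(f)=d+1$. The structural key is Lemma~\ref{lem-pendant}: in any $(d+1)$-labeling the largest label $q$ must sit on a pendant edge. A leg of length $1$ is exactly what supplies a pendant edge whose far endpoint is the \emph{core} itself rather than a degree-$2$ vertex. This is precisely the configuration missing in the all-length-$\ge 2$ case: there the far end of an outer (pendant) edge is internal, and in the proof of Theorem~\ref{thm-maxdeg} (and in Theorem~\ref{thm-sp2}) that extra internal vertex is forced to a distinct large color, pushing the count to $d+2$. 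Note that Theorem~\ref{thm-maxdeg} does \emph{not} apply to our spiders, since here the core is adjacent to a pendant; this is why no size condition is needed.

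The plan for the construction is to place $q$ (and, where convenient, the next largest labels) on the length-$1$ leg edge(s) at the core, and to aim for the color split ``the $d$ leaves carry $d$ colors, the core carries one further color, and every degree-$2$ vertex is labeled so that its induced sum coincides with one of the leaf colors.'' Concretely, I would make all outer (leaf) edge labels distinct, so the $d$ leaves get $d$ distinct colors; for a length-$2$ leg $u\,v_1\,v_2$ I would choose $f(uv_1)+f(v_1v_2)=q$ so that $f^+(v_1)$ equals the color $q$ of a length-$1$ leaf (as in the pattern $f(e_a)=5$, $\{f(e_{b1}),f(e_{b2})\}=\{4,1\}$, $\{f(e_{c1}),f(e_{c2})\}=\{3,2\}$ for $Sp(1,2,2)$); and for a longer leg I would lay a block of consecutive labels in a high--low alternating order, so that consecutive internal sums differ and take only a couple of values, chosen to land on prescribed leaf colors. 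The star $Sp(1^{[d]})$, already known to have $\chi_{la}=d+1$, is the degenerate base case in which every leg has length $1$.

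It then remains to verify three things for the resulting $f$: that it is a bijection onto $[1,q]$; that adjacent vertices get different colors (core against its neighbors via the $q$-placement, consecutive internal vertices via the alternation, and each outermost internal vertex against its own leaf, which needs that leaf's color to avoid the internal values used on its leg); and that exactly $d+1$ colors occur. I expect the genuine difficulty to be this last, purely combinatorial layer: arranging the consecutive blocks and their high--low orders so that, simultaneously, (a) the labels tile $[1,q]$, (b) every internal sum equals some leaf color, and (c) the leaf colors are distinct and each one differs from the internal values on its own leg. Because the alternating internal sums depend on a leg's length and on where its block of labels begins, reconciling all of these arithmetic constraints at once forces a case analysis according to the number of length-$1$ legs and the lengths and parities of the remaining legs; making the bookkeeping consistent across every leg is the crux of the proof.
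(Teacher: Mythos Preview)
Your lower bound is fine. For the upper bound, however, what you have written is a plan rather than a proof: you never actually specify a labeling, and you explicitly defer the ``crux'' to an anticipated case analysis on the number of length-$1$ legs and on the lengths and parities of the remaining legs. That case analysis is precisely what the paper avoids, and the missing idea is simple enough that the whole construction fits in a paragraph.

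The trick you are missing is this: do \emph{not} give each long leg its own block of labels. Instead, concatenate the $r$ legs of length $a_1,\dots,a_r\ge 2$ into one sequence of $q-t$ edges $e_1,\dots,e_{q-t}$ (where $t$ is the number of length-$1$ legs and $q=\sum a_i+t$), and run a single global zigzag over that sequence: $f(e_j)=j/2$ for even $j$ and $f(e_j)=(q-t)-(j-1)/2$ for odd $j$. Then every degree-$2$ vertex has induced label either $q-t$ or $q-t+1$, regardless of which leg it sits on or of any parity. The $t$ length-$1$ edges take the remaining labels $q-t+1,\dots,q$. Now the $r$ long-leg leaves carry the values $f(e_1)=q-t$ and $r-1$ further distinct values at most $q-t-1$; the $t$ short-leg leaves carry $q-t+1,\dots,q$; the core exceeds $q$. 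All degree-$2$ colors coincide with the two leaf colors $q-t$ and $q-t+1$, adjacent degree-$2$ vertices alternate between these two values, and each long-leg leaf is strictly smaller than its degree-$2$ neighbour. So $c(f)=r+t+1=d+1$ with no case split at all.

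Your per-leg-block approach would also work in principle, but the internal sums on leg $i$ would depend on where its block begins, and stitching those sums together to hit a common pair of leaf colors is exactly the bookkeeping you flagged as hard. Concatenating first eliminates it entirely.
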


\begin{proof} Let $G = Sp(a_1,a_2,\ldots,a_r,1^{[t]})$ such that $r\ge 1$, $t\ge 1$, $a_i\ge 2$, and $d=r+t\ge 3$. Thus, $G$ is of size $q=\sum^r_{i=1}a_i + t$. Arrange all the paths of lengths $a_1, a_2, \ldots, a_r$ horizontally from left to right, and name the edges from left to right as $e_1, e_2, \ldots, e_{q-t}$. Let $E = \{e_j\,|\, 1\le j\le q-t\}$. Define $f : E \to [1,q-t]$ such that $f(e_j) = j/2$ for even $j$ and $f(e_j) = (q-t)-(j-1)/2$ for odd $j$. Finally, label the edges of the remaining $t$ path(s) of length 1 by $q-t+1$ to $q$ bijectively.

\ms\nt Identify all the $r+t$ right end-vertices of the paths as the core. We now have the graph $G$ with an induced edge labeling given by $f : E(G) \to [1,q]$. Observe that
\begin{enumerate}[(i)]
  \item each degree 2 vertex has induced label $j/2 + (q-t)-j/2 = q-t$ or $(q-t)-(j-1)/2 + (j+1)/2 = q-t+1$.
  \item each of the $t$ pendant vertices of paths length 1 has induced label $q-t+1, \ldots, q$ respectively.
  \item the pendant vertex of path length $a_1$ has induced label $q-t$ while all remaining $r-1$ pendant vertices have mutually distinct induced label at most $q-t-1$.
  \item the core has induced label larger than $q$.
\end{enumerate}

\nt Thus, $f$ is a local antimagic labeling with $c(f) = r+t+1=d+1$ and $\chi_{la}(G)\le d+1$. By Theorem~\ref{thm-pendant}, $\chi_{la}(G)\ge d+1$. The theorem holds.
\end{proof}

\begin{example}  In the following figure, we give the above defined labeling for $Sp(4,2,3,5,1)$.\\
\centerline{\begin{tabular}{m{7cm}p{6cm}}
\epsfig{file=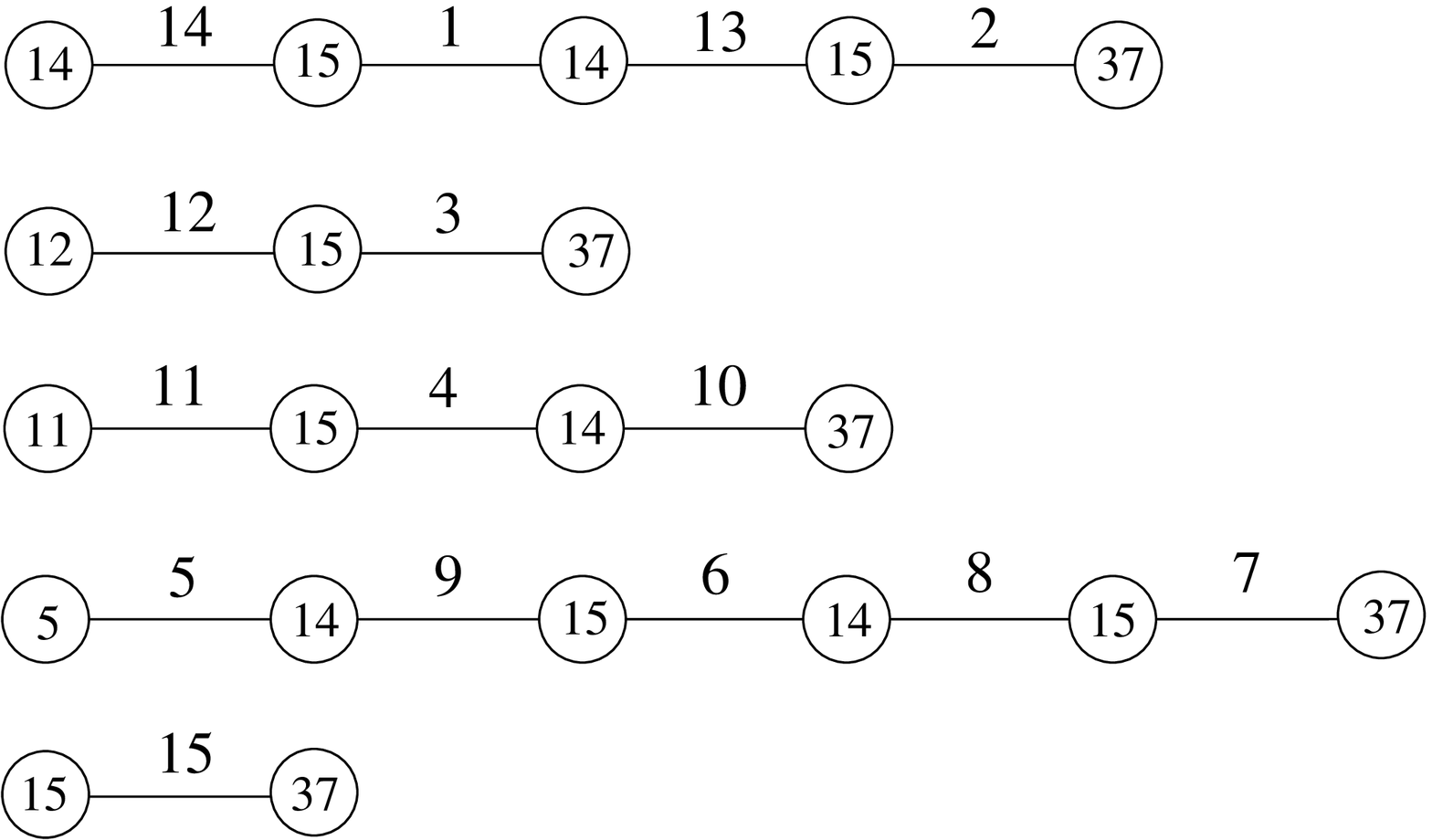,width=5cm} &  
$\begin{array}{*{5}{c}}
14 & 1 & 13 & 2\\
12 & 3\\
11 & 4 & 10\\
5 & 9 & 6 & 8 & 7\\
15\\
$(15,14,12,11,5)$
\end{array}$\\
\end{tabular}}

The right vertex of each path are identified as the core $u$. We use the right table to denote this labeling together with the induced vertex labels. We shall also use such kind of table to represent a labeling for a spider in the rest of the paper.
\rsq
\end{example}

\nt Call the labeling of the paths of length $a_i\ge 2, 1\le i\le r$ in Theorem~\ref{thm-leglen1} the {\it fundamental labeling} of spider.  By using this labeling, it is easy to get the following result:

\begin{corollary}\label{cor-spider}
Suppose $G$ is a spider with $d\ge 3$ legs, then $d+1\le \chi_{la}(G)\le d+2$.
\end{corollary}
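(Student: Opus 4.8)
The plan is to prove the two inequalities separately, invoking Theorem~\ref{thm-pendant} for the lower bound and the fundamental labeling for the upper bound, and splitting the upper bound according to whether or not the spider has a leg of length $1$.

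For the lower bound, observe that each of the $d$ legs of $G$ contributes exactly one leaf, so $G$ has exactly $d$ pendant vertices; since $d\ge 3$, $G$ has at least $d+1\ge 4$ vertices and is not $K_2$, so Theorem~\ref{thm-pendant} gives $\chi_{la}(G)\ge d+1$. For the upper bound, if some leg has length $1$ then Theorem~\ref{thm-leglen1} already yields $\chi_{la}(G)=d+1\le d+2$, so I would assume every leg has length $\ge 2$ and write $G=Sp(a_1,\dots,a_d)$ with $a_i\ge 2$ and $q=\sum_i a_i$. The idea is to apply the fundamental labeling to all $d$ legs (the $t=0$ instance of the construction of Theorem~\ref{thm-leglen1}): lay the legs out from left to right, number the edges $e_1,\dots,e_q$, put $f(e_j)=j/2$ for even $j$ and $f(e_j)=q-(j-1)/2$ for odd $j$, and identify the right ends as the core. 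As in that proof, every degree-$2$ vertex then receives induced label $q$ or $q+1$, and the leaf of the leftmost leg receives $f(e_1)=q$.

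The colour count is then immediate. Writing $s_i=a_1+\cdots+a_i$, the core is incident to $e_{s_1},\dots,e_{s_d}$ and the leaves to $e_1,e_{s_1+1},\dots,e_{s_{d-1}+1}$. The degree-$2$ vertices use only the two colours $q$ and $q+1$; the leaf of the first leg reuses $q$; the remaining $d-1$ leaf labels are distinct and, sitting at positions $\ge 3$, are all at most $q-1$ (hence distinct from $q,q+1$); and the core contributes at most one further colour. This gives $c(f)\le 2+(d-1)+1=d+2$, regardless of any further coincidences.

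The main obstacle is checking that $f$ is genuinely local antimagic. Within each leg the labels alternate, so consecutive degree-$2$ vertices alternate between $q$ and $q+1$, and each leaf differs from its unique neighbour since the latter carries an extra positive edge; the only delicate adjacencies are those between the core and its $d$ degree-$2$ neighbours, whose colours lie in $\{q,q+1\}$. This can genuinely fail for the naive layout: for instance $Sp(2,4,3)$ in this order has core colour $c_0=f(e_2)+f(e_6)+f(e_9)=1+3+5=9=q$, which collides with the near-core vertex of the last leg. I would remove such collisions using the freedom to permute the legs, which alters neither the degree-$2$ colours nor the count. Reordering $Sp(2,4,3)$ as $Sp(3,4,2)$, for example, makes every partial sum $s_i$ odd, forcing all core-incident labels to be large and $c_0>q+1$. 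In general I would choose the order so that enough $s_i$ are odd to push $c_0$ above $q+1$ (or, more weakly, so that $c_0$ avoids the colours actually appearing at the core's neighbours); in the residual all-even-lengths case every $s_i$ is forced even and every core-neighbour is coloured $q+1$, so I would instead vary the order to arrange $c_0=\tfrac12\sum_i s_i\ne q+1$, which only requires dodging one forbidden value and is checked directly in the all-legs-equal subcase. Establishing that a conflict-free ordering always exists—a short parity-and-sizing argument—is the technical heart of the proof.
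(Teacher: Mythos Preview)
Your lower bound and the general strategy for the upper bound coincide with the paper's: the paper simply says that by using the fundamental labeling ``it is easy to get'' the corollary, with no further detail. So the overall approach matches.

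Where you go beyond the paper is in noticing that the fundamental labeling, applied verbatim to a spider with all legs of length $\ge 2$, need not be local antimagic: your example $Sp(2,4,3)$ is correct, and the core does collide with a neighbour. The paper does not mention this at all, so you have uncovered a genuine subtlety that the paper glosses over.

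That said, your proposal does not actually close the gap you found. You reduce the upper bound to the claim that the legs can always be \emph{reordered} so that the core label $c_0$ avoids the colours at its neighbours, call this ``the technical heart of the proof'', and then stop. As written this is an assertion, not an argument: you give one worked example ($Sp(3,4,2)$) and a heuristic (``make enough $s_i$ odd''), but no proof that a good ordering exists for every spider. In the all-even case you say one must ``dodge one forbidden value'' and that this ``is checked directly in the all-legs-equal subcase'', but you neither perform that check nor handle unequal even legs; in the mixed-parity case you give no argument at all. Since this claim is precisely what makes the labeling local antimagic, the proposal is incomplete.

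To finish, you need either (i) a proof that some ordering always avoids $c_0\in\{q,q+1\}$ whenever the relevant near-core colour occurs (for instance: if all $a_i$ are even, show two distinct orderings give different values of $c_0=\tfrac12\sum s_i$ unless all $a_i$ are equal, and handle the equal case by the direct computation $c_0=\tfrac{a d(d+1)}{4}\ne ad+1$; then treat the case with an odd leg separately), or (ii) a different route to $\chi_{la}\le d+2$ that sidesteps the core-collision issue altogether.
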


\ms\nt In what follows, we assume every leg of spider is of length at least 2.

\begin{theorem}\label{thm-legnum}
Suppose $y_1, y_2,\ldots, y_d\ge 2$ and $d\ge 3$. If  $d(d+1)> 2(2q-1)$, then $\chi_{la}(Sp(y_1,y_2,\ldots,y_d)) = d+2$.
\end{theorem}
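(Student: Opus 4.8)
The plan is to establish the lower bound $\chi_{la}(G)\ge d+2$ and then close the gap using the bound already in hand. Corollary~\ref{cor-spider} gives $\chi_{la}(Sp(y_1,\ldots,y_d))\le d+2$ for every $d$-leg spider, so it suffices to rule out the value $d+1$; that is, I need only prove $\chi_{la}(G)\ge d+2$. The natural instrument is Theorem~\ref{thm-maxdeg}, and the entire argument reduces to checking that the spider's parameters satisfy its hypotheses.

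First I would record the structural data. Write $G=Sp(y_1,\ldots,y_d)$ with $q=\sum_{i=1}^d y_i$ edges. Under the standing assumption (stated just before the theorem) that every leg has length at least $2$, each of the $d$ leg-ends is a pendant vertex, so $G$ has exactly $k=d\ge 1$ pendants; every vertex other than the core $u$ is either such a pendant or an internal path vertex of degree $2$. Hence the core $u$ is the unique vertex of maximum degree $\D=d$, and because no leg has length $1$, all neighbors of $u$ have degree $2$, so $u$ is not adjacent to any pendant. Taking $m=2$, the remaining vertices all have degree at most $m$, and the hypothesis $d\ge 3$ guarantees $m=2<d=\D$. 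Thus $G$ meets every structural requirement of Theorem~\ref{thm-maxdeg}.

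It then remains only to translate the numerical condition. Substituting $\D=d$ and $m=2$ into the inequality $\D(\D+1)>m(2q-m+1)$ of Theorem~\ref{thm-maxdeg} gives exactly $d(d+1)>2(2q-1)$, which is precisely the hypothesis of the present theorem. Therefore Theorem~\ref{thm-maxdeg} applies and yields $\chi_{la}(G)\ge k+2=d+2$. Combining this with $\chi_{la}(G)\le d+2$ from Corollary~\ref{cor-spider} forces $\chi_{la}(G)=d+2$, as claimed.

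I do not anticipate a real obstacle: the statement is essentially a specialization of Theorem~\ref{thm-maxdeg} to the spider family. The only points needing care are verifying that the core is genuinely the \emph{unique} vertex of maximum degree and that it is non-adjacent to pendants; both follow from the leg-length-$\ge 2$ convention together with $d\ge 3$, after which the degree inequality matches the hypothesis verbatim.
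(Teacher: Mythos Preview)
Your proof is correct and follows the same approach as the paper: apply Theorem~\ref{thm-maxdeg} with $\Delta=d$, $m=2$, $k=d$ to obtain the lower bound $d+2$, and combine with the upper bound from Corollary~\ref{cor-spider}. The paper's version is terser, simply citing the two results, whereas you spell out why the spider meets each hypothesis of Theorem~\ref{thm-maxdeg}; the substance is identical.
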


\begin{proof}
Let $g$ be a local antimagic labeling of $G=Sp(y_1,y_2,\ldots,y_d)$. Under the hypothesis, by Theorem~\ref{thm-maxdeg}, we know  $c(g)\ge d+2$. By Corollary~\ref{cor-spider}, the theorem holds.
\end{proof}

\begin{corollary} There are infinitely many spiders of $d\ge 3$ legs with $\chi_{la} = d+2$. \end{corollary}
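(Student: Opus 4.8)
The plan is to produce an explicit infinite family of spiders that all satisfy the hypothesis $d(d+1) > 2(2q-1)$ of Theorem~\ref{thm-legnum}, so that each member automatically has $\chi_{la} = d+2$. Note first that the inequality can be rewritten as $q < (d^2+d+2)/4$, so for a \emph{fixed} number of legs $d$ only finitely many sizes $q$ can qualify; the infinitude must therefore come from letting $d$ grow. For a fixed $d$, the inequality is easiest to meet when $q$ is as small as possible, which (since every leg has length at least $2$ in this section) happens for the uniform spider $Sp(2^{[d]})$ of size $q = 2d$.

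The next step is simply to substitute $q = 2d$ into the hypothesis: the condition $d(d+1) > 2(2\cdot 2d - 1)$ reduces to $d^2 - 7d + 2 > 0$. A quick inspection of this quadratic shows that it fails at $d = 6$ but holds for every integer $d \ge 7$. Hence for each $d \ge 7$ the spider $Sp(2^{[d]})$ meets the hypothesis of Theorem~\ref{thm-legnum}, and we conclude $\chi_{la}(Sp(2^{[d]})) = d + 2$. As $d$ ranges over $\{7, 8, 9, \ldots\}$ these are pairwise non-isomorphic graphs, giving infinitely many spiders with $\chi_{la} = d+2$, as required.

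I expect no real obstacle here beyond verifying the single quadratic inequality, since the substance of the argument is already carried by Theorem~\ref{thm-legnum}. The only points requiring care are that $Sp(2^{[d]})$ genuinely has all legs of length at least $2$ (so that Theorem~\ref{thm-legnum} applies) and that its size is exactly $q = 2d$. For completeness I would remark that the sharper Theorem~\ref{thm-sp2} already gives $\chi_{la}(Sp(2^{[d]})) = d+2$ for all $d \ge 4$, so one could alternatively quote it to lower the threshold; but invoking Theorem~\ref{thm-legnum} keeps the corollary self-contained as an immediate consequence of the result it follows.
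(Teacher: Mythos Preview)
Your argument is correct. The paper states this corollary without proof, treating it as immediate from Theorem~\ref{thm-legnum} (and implicitly Theorem~\ref{thm-sp2}); your choice of the family $Sp(2^{[d]})$ for $d\ge 7$ is exactly the natural way to make that immediacy explicit, and your closing remark about lowering the threshold to $d\ge 4$ via Theorem~\ref{thm-sp2} matches the paper's earlier result.
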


\begin{theorem}\label{thm-len23} For $n\ge 0, m\ge 1$ and $n+m\ge 3$, $\chi_{la}(Sp(2^{[n]},3^{[m]}))=n+m+2$ if $(n,m)\not\in A$, where

\centerline{$\begin{aligned}A= & \{(0,i_0)\;|\;i_0\in[3,10]\}\cup\{(1,i_1)\;|\; i_1\in[2,9]\}\cup\{(2,i_2)\;|\; i_2\in[1,8]\}\cup\{(3,i_3)\;|\; i_3\in[1,6]\}\\& \cup\{(4,i_4)\;|\; i_4\in[1,5]\}\cup\{(5,1), (5,2), (5,3), (6,1)\}.\end{aligned}$}
\end{theorem}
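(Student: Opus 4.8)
The plan is to reduce the entire statement to a single application of Theorem~\ref{thm-legnum}. Write $d=n+m$ for the number of legs and $q=2n+3m$ for the size of $G=Sp(2^{[n]},3^{[m]})$. By Corollary~\ref{cor-spider} we always have $\chi_{la}(G)\le d+2$, so it suffices to establish the reverse inequality $\chi_{la}(G)\ge d+2$ for every admissible pair $(n,m)\notin A$. Since Theorem~\ref{thm-legnum} delivers exactly the conclusion $\chi_{la}(G)=d+2$ whenever $d(d+1)>2(2q-1)$, the heart of the argument is the purely arithmetic claim that, among the pairs with $n\ge 0$, $m\ge 1$, $n+m\ge 3$, one has $(n,m)\notin A$ if and only if $d(d+1)>2(2q-1)$.

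To verify this equivalence I would first rewrite the inequality. Substituting $d=n+m$, $q=2n+3m$ and setting $s=n+m$, the condition $d(d+1)>2(2q-1)$ becomes $s(s+1)>8n+12m-2$, i.e.\ $s^2+s>8s+4m-2$, which simplifies to
\[
4m < s^2-7s+2 .
\]
Thus Theorem~\ref{thm-legnum} applies precisely when $4m<s^2-7s+2$, and the task is to check that the explicit set $A$ in the statement is exactly $\{(n,m): 4m\ge s^2-7s+2\}$ subject to $1\le m\le s$, $s\ge 3$.

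This splits into two regimes. For $s\ge 11$ the inequality holds for every admissible $m$: since $m\le s$, it is enough that $4s<s^2-7s+2$, i.e.\ $s^2-11s+2>0$, and as the quadratic $s^2-11s+2$ equals $2$ at $s=11$ and is increasing for $s\ge 6$, this holds for all $s\ge 11$; hence no pair with $s\ge 11$ lies in $A$, matching the fact that the largest $s$ occurring in $A$ is $10$. For each $s\in\{3,4,\dots,10\}$ one computes the threshold $(s^2-7s+2)/4$ explicitly --- its values are $-2.5,\,-2.5,\,-2,\,-1,\,0.5,\,2.5,\,5,\,8$ --- reads off the least $m$ with $4m\ge s^2-7s+2$ in the range $1\le m\le s$, and lists the failing pairs. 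Reorganising these row by row in $n$ yields: $n=0$ with $m\in[3,10]$, $n=1$ with $m\in[2,9]$, $n=2$ with $m\in[1,8]$, $n=3$ with $m\in[1,6]$, $n=4$ with $m\in[1,5]$, together with $(5,1),(5,2),(5,3),(6,1)$, which is precisely $A$.

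With the equivalence in hand the theorem follows immediately: for every admissible $(n,m)\notin A$ we have $d(d+1)>2(2q-1)$, so Theorem~\ref{thm-legnum} gives $\chi_{la}(G)=d+2=n+m+2$. The only genuine work is the bookkeeping of the finite check for $3\le s\le 10$, where one must respect the strict-versus-nonstrict boundary --- for instance $(4,5)$ satisfies $4m=20=s^2-7s+2$, so the equality places it in $A$, consistent with $d(d+1)=90=2(2q-1)$ there. There is no conceptual obstacle, since all of the structural content has already been absorbed into Theorem~\ref{thm-maxdeg} and hence into Theorem~\ref{thm-legnum}; the main point of interest is simply that the exceptional set $A$ is exactly the region where the degree inequality fails.
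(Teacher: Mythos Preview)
Your proof is correct and follows essentially the same route as the paper: both reduce the statement to showing that $(n,m)\notin A$ is equivalent to the arithmetic hypothesis $d(d+1)>2(2q-1)$ of Theorem~\ref{thm-legnum}, and then invoke that theorem. The only cosmetic difference is the change of variables: the paper completes the square to obtain $(n+m-\tfrac{7}{2})^2\le 4m+\tfrac{41}{4}$ and cuts down to a finite check via the separate bounds $m\ge 11$ and $n\ge 7$, whereas you parameterise by $s=n+m$ to reach $4m<s^2-7s+2$ and reduce to $3\le s\le 10$ in one stroke; this is arguably a little tidier but not a genuinely different argument.
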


\begin{proof} Let $G=Sp(2^{[n]},3^{[m]})$. Now, $q=2n+3m$ and $d=n+m$.  The inequality $(n+m)(n+m+1) \le 2(4n+6m-1)$ is equivalent to $(n+m-\frac{7}{2})^2\le 4m+\frac{41}{4}$.

\ms\nt If $m\ge 11$, then $(n+m-\frac{7}{2})^2\ge (m-\frac{7}{2})^2=m^2-7m+\frac{49}{4} > 4m+\frac{41}{4}$.

\ms\nt If $n\ge 7$, then $(n+m-\frac{7}{2})^2\ge m^2+7m+\frac{49}{4}>4m+\frac{41}{4}$.

\ms\nt So we have to deal with $1\le m\le 10$ and $0\le n\le 6$. It is routine to check that $(n+m)(n+m+1) \le 2(4n+6m-1)$ if and only if $(n,m)\in A$.  In other word, $(n,m)\not\in A$ if and only if $d(d+1) > 2(2q-1)$. By Theorem~\ref{thm-legnum}, we know that $\chi_{la}(G)=n+m+2$ if $(n,m)\not\in A$.
Thus, the theorem holds.
\end{proof}

Let us analyze the graph $G=Sp(2^{[n]},3^{[m]})$, where $(n,m)\in A$ and $A$ is defined in Theorem~\ref{thm-len23}.
By Appendix, it suffices to consider $(n,m)\in\{(0,10), (1,8), (1,9), (2,7), (2,8), (3,5), (3,6), (4,4), (4,5), (5,3)\}$.

Denote the legs of length 2 as $P_3^{(i)}=x_{i1}x_{i2}x_{i3}$, $1\le i\le n$ and those of length 3 as $P_4^{(j)}=y_{j1}y_{j2}y_{j3}y_{j4}$, $1\le j\le m$. Also the core $u=x_{i3}=y_{j4}$ for all $i,j$. Let $f$ be a local antimagic $(m+n+1)$-labeling. Note that, $f^+(u)\ge \frac{1}{2}(n+m)(n+m+1)\ge 4n+4m+4>2n+3m=q$.

By Lemma~\ref{lem-pendant}, $q$ is labeled at a pendant edge.
Suppose $q=2n+3m$ is labeled at a pendant edge of a leg of length 2 ($n\ge 1$). Without loss of generality, we may assume $f(x_{11}x_{12})=q$.
Since $f^+(x_{12})\ge q+1$ and $f^+(x_{i1})$ and $f^+(y_{j1})$ are at most $q$ for $1\le i\le n$ and $1\le j\le m$, $f^+(x_{13})=f^+(u)\le q$ which is a contradiction.
So $q$ must be labeled at a pendant edge of a leg of length 3.
By symmetric we may assume that $f(y_{11}y_{12})=q$. Then, $f^+(y_{11})=q$, $f^+(y_{12})\ge q+1$ and the other induced vertex colors are less than $q$. Thus $f^+(u)=f^+(y_{12})$. Let $f(y_{12}y_{13})=x$. Then $f^+(u)=f^+(y_{12})=q+x$. Then $q+x\ge \sum\limits_{i=1}^{m+n} i=\frac{1}{2}(m+n)(m+n+1)$.
So
 \begin{equation}\label{eq-cond1}
x\in[\textstyle\frac{1}{2}(n+m)(n+m+1)-(2n+3m),\ 2n+3m-1].\end{equation}

\begin{enumerate}[(A)]
\item Suppose $n=0$. Only the case $(n,m)=(0,10)$. From \eqref{eq-cond1} we have $x\in[25,29]$. Following table lists all the possible labels assigned at the edges incident to the core:
\[\begin{array}{c|c|l}
x & f^+(u) & \mbox{labels incident to the core}\\\hline
25 & 55 & 1,2,3,4,5,6,7,8,9,10\\
26 & 56 & 1,2,3,4,5,6,7,8,9,11\\
27 & 57 & 1,2,3,4,5,6,7,8,10,11\\
27 & 57 & 1,2,3,4,5,6,7,8,9,12\\
28 & 58 & 1,2,3,4,5,6,7,8,9,13\\
28 & 58 & 1,2,3,4,5,6,7,8,10,12\\
28 & 58 & 1,2,3,4,5,6,7,9,10,11\\
29 & 59 & 1,2,3,4,5,6,7,8,9,14\\
29 & 59 & 1,2,3,4,5,6,7,8,10,13\\
29 & 59 & 1,2,3,4,5,6,7,9,10,12\\
29 & 59 & 1,2,3,4,5,6,8,9,10,11
\end{array}\]
Note that integers in $[1,6]$ are labeled to edges incident to the core (we shall say that the labels are {\it incident to} the core). So, the edge with label 24 must be adjacent to an edge with label $s, 7\le s\le 29$. Thus the incident vertex of these two labels must have label $q+x$. This means $s=q+x-24 = x+6\ge 31$ which is impossible. Thus $\chi_{la}(Sp(3^{[10]}))=12$.

\item Suppose $n=1$. There are two cases $(n,m)=(1,8), (1,9)$.
When $(n,m)=(1,8)$.  From \eqref{eq-cond1} we have $x\in[19,25]$.
It is easy to check that labels in $[1,28-x]$ are incident to the core. Since there is only one leg of length 2,
there exists $y\in\{x-1, x-2\}$ is labeled at an edge of the leg of length 3. Since labels in $[1,28-x]$ are incident to the core, similar to Case~(A), we get that $q+x-y\ge q+1$. So, there is no local antimagic $10$-labeling for $Sp(2, 3^{[8]})$. Thus, $\chi_{la}(Sp(2, 3^{[8]}))=11$.

When $(n,m)=(1,9)$.  From \eqref{eq-cond1} we have $x\in[26,27]$. By a similar argument as above, we get that $\chi_{la}(Sp(2,3^{[9]}))=12$.

\item Suppose $n=2$. There are two cases $(n,m)=(2,7), (2,8)$.

When $(n,m)=(2,7)$. From \eqref{eq-cond1} we have $x\in[20,24]$. It is easy to check that labels in $[1,29-x]$ are incident to the core. Since there is only two legs of length 2,
there exists $y\in\{x-1, x-2, x-3\}$ is labeled at an edge of the leg of length 3. Since labels in $[1,29-x]$ are incident to the core, similar to Case~(A), we get that $q+x-y\ge q+1$. So, there is no local antimagic $10$-labeling for $Sp(2^{[2]}, 3^{[7]})$. Thus, $\chi_{la}(Sp(2^{[2]}, 3^{[7]}))=11$.

When $(n,m)=(2,8)$. From \eqref{eq-cond1} we have $x=27$. By a similar argument, we have $\chi_{la}(Sp(2^{[2]},3^{[8]}))=12$.

\item For the case $(n,m)\in\{(3,5), (3,6), (4,4), (4,5), (5,3)\}$. The proof is similar to the above cases. So we omit here.
\end{enumerate}

Hence we have,

\begin{theorem}
For $n\ge 0$, $m\ge 1$ and $n+m\ge 3$, $\chi_{la}(Sp(2^{[n]},3^{[m]}))=n+m+1$ if and only if $(n,m)\in B$, where

\centerline{$\begin{aligned}B= & \{(0,i_0)\;|\;i_0\in[3,9]\}\cup\{(1,i_1)\;|\; i_1\in[2,7]\}\cup\{(2,i_2)\;|\; i_2\in[1,6]\}\cup\{(3,i_3)\;|\; i_3\in[1,4]\}\\& \cup\{(4,i_4)\;|\; i_4\in[1,3]\}\cup\{(5,1), (5,2), (6,1)\}.\end{aligned}$}\end{theorem}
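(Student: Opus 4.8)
The plan is to exploit the dichotomy already in place for these spiders. Write $d=n+m$ and $q=2n+3m$. By Corollary~\ref{cor-spider} every spider with $d\ge 3$ legs satisfies $d+1\le\chi_{la}\le d+2$, so $\chi_{la}(Sp(2^{[n]},3^{[m]}))$ takes only the two values $d+1$ and $d+2$. Hence $\chi_{la}=d+1$ is equivalent to $\chi_{la}\ne d+2$, and the claimed biconditional reduces to the single statement
\[
\chi_{la}(Sp(2^{[n]},3^{[m]}))=d+2\iff(n,m)\notin B.
\]
The proof therefore splits into an ``only if'' direction, $(n,m)\notin B\Rightarrow\chi_{la}=d+2$, and an ``if'' direction, $(n,m)\in B\Rightarrow\chi_{la}=d+1$.

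For the ``only if'' direction I would partition the complement of $B$ by membership in the set $A$ of Theorem~\ref{thm-len23}. If $(n,m)\notin A$, then Theorem~\ref{thm-len23} gives $\chi_{la}=d+2$ at once. Otherwise $(n,m)\in A\setminus B$, and a direct set computation yields
\[
A\setminus B=\{(0,10),(1,8),(1,9),(2,7),(2,8),(3,5),(3,6),(4,4),(4,5),(5,3)\},
\]
which are exactly the ten pairs examined in the case analysis~(A)--(D) preceding the theorem. There it was shown that no local antimagic $(d+1)$-labeling exists, so again $\chi_{la}=d+2$. Since $(n,m)\notin B$ means either $(n,m)\notin A$ or $(n,m)\in A\setminus B$, this direction is complete.

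For the ``if'' direction note that $B$ is finite, consisting of $29$ pairs. For each such pair I would exhibit one explicit local antimagic labeling attaining exactly $d+1$ colors, certifying $\chi_{la}\le d+1$; combined with the lower bound $\chi_{la}\ge d+1$ from Corollary~\ref{cor-spider}, this forces $\chi_{la}=d+1$. I would organize these by the value of $n$, matching the rows defining $B$, and record each as a labeling table of the kind introduced in the Example, so that bijectivity of the edge labels and distinctness of adjacent induced labels can be read off directly. These are precisely the families collected in the Appendix.

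The main obstacle is the ``if'' direction. In contrast to the \emph{fundamental labeling} of Theorem~\ref{thm-leglen1}, which works uniformly once a leg of length $1$ is present, the pairs in $B$ lie exactly on the boundary $d(d+1)\le 2(2q-1)$, so a $(d+1)$-labeling only just exists: the large labels near $q$ must be placed so that the core does not split off as a $(d+2)$-nd color class, while the induced values on the interior and pendant vertices must coalesce into precisely $d+1$ classes. Achieving this has to be done pair by pair, and verifying that none of the $29$ tables accidentally repeats an induced label across an edge is where the real effort lies. A secondary difficulty, already discharged in~(A)--(D), is the nonexistence argument for the ten exceptional pairs, which hinges on the delicate bookkeeping of which small labels are forced to be incident to the core.
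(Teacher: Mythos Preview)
Your proposal is correct and follows exactly the paper's approach: the theorem is assembled from Corollary~\ref{cor-spider} (the $d+1/d+2$ dichotomy), Theorem~\ref{thm-len23} (handling $(n,m)\notin A$), the case analysis (A)--(D) (ruling out the ten pairs of $A\setminus B$), and the Appendix (the $29$ explicit $(d+1)$-labelings for $(n,m)\in B$). Your identification of the set computation $A\setminus B$ and the count $|B|=29$ is accurate, and your assessment of where the real work lies---the ad hoc constructions in the Appendix and the incidence bookkeeping in (A)--(D)---matches the paper's structure precisely.
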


\begin{theorem}\label{thm-evenlegs} Let $y_1, \ldots, y_d\ge 2$ be evens such that $y_d = y_{d-2} + 2y_{d-3} + \cdots + (d-3)y_2 + (d-2)y_1$. Then $\chi_{la}(Sp(y_1,\ldots,y_d)) = d+1$, where $d\ge 2$.  \end{theorem}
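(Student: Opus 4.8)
The plan is to get the lower bound $\chi_{la}(Sp(y_1,\ldots,y_d))\ge d+1$ for free from Theorem~\ref{thm-pendant} — the spider has exactly $d$ pendant vertices (the far ends of the $d$ legs) and is not $K_2$ — and then to exhibit a local antimagic $(d+1)$-labeling by applying the \emph{fundamental labeling} of Theorem~\ref{thm-leglen1} to all $d$ legs simultaneously. Since every leg has length at least $2$, there are no length-$1$ legs, so I take $t=0$ and $q-t=q$: arranging the legs from left to right and naming the edges $e_1,\ldots,e_q$, I set $f(e_j)=j/2$ for even $j$ and $f(e_j)=q-(j-1)/2$ for odd $j$. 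Because each $y_i$ is even, the partial sums $S_i=y_1+\cdots+y_i$ are all even; the first edge of each leg then lies at an odd position and the last edge of leg $i$ (the one meeting the core) is $e_{S_i}$, carrying the label $S_i/2$.

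Next I would read off the three kinds of induced colors. A direct check of the alternating pattern shows every degree-$2$ vertex receives color $q$ or $q+1$, the two colors alternating along each leg, and — crucially — the internal vertex adjacent to the core always receives $q+1$. The pendant of leg $i$ is incident only to $e_{S_{i-1}+1}$ (an odd position, with $S_0=0$), so its color is $q-S_{i-1}/2$; these $d$ values are strictly decreasing in $i$, hence distinct, all at most $q$, with the first equal to $q$. Finally the core is incident to $e_{S_1},\ldots,e_{S_d}$, so its color is
\[
\tfrac12\sum_{i=1}^d S_i=\tfrac12\sum_{i=1}^d\sum_{j=1}^i y_j=\tfrac12\sum_{j=1}^d(d-j+1)\,y_j .
\]

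The key step is to observe that this core color equals $q=\sum_j y_j$ exactly when $\sum_{j=1}^d(d-1-j)y_j=0$; isolating the $j=d-1$ term (whose coefficient vanishes) and the $j=d$ term (coefficient $-1$), this rearranges to $y_d=\sum_{j=1}^{d-2}(d-1-j)y_j$, which is precisely the hypothesis (note it forces $d\ge 3$, as the right-hand side is empty for $d=2$). Thus under the stated relation the core and the first pendant both receive color $q$. I would then verify that this causes no clash: the core is adjacent only to internal vertices of color $q+1$; the first pendant (color $q$) is adjacent only to an internal vertex of color $q+1$; and every internal vertex of color $q$ is flanked by internal vertices of color $q+1$. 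Hence the colors actually used are the $d$ distinct pendant colors together with the single extra color $q+1$, so $c(f)=d+1$ and $\chi_{la}(Sp(y_1,\ldots,y_d))\le d+1$, which combined with the lower bound finishes the proof.

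The construction itself is routine once the right target is guessed; the genuinely nontrivial point — and the step I expect to be the crux — is recognizing that the arithmetic identity in the hypothesis is \emph{exactly} the condition forcing the core color down to $q$ in the fundamental labeling, so that it is absorbed into the (necessarily present) color of the first pendant rather than contributing a $(d+2)$nd color. The rest is the bookkeeping verification that color $q$ never lands on two adjacent vertices, which is immediate from the alternating structure.
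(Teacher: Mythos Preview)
Your proposal is correct and follows essentially the same approach as the paper: apply the fundamental labeling (with $t=0$), compute the core's induced label as $\tfrac{1}{2}\sum_{i=1}^d(d-i+1)y_i$, and observe that the hypothesis is exactly the condition $\sum_{i=1}^d(d-i-1)y_i=0$ making this equal to $q$, so the core color merges with the first pendant's color. Your write-up is in fact slightly more explicit than the paper's in verifying that the neighbor of the core and of each pendant has color $q+1$, so no adjacency clash arises.
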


\begin{proof} Let $G = Sp(y_1,y_2,\ldots,y_d)$ that satisfies the given conditions. Note that, the size of $G$ is $q = \sum\limits^d_{i=1} y_i$ and the condition can be rewritten as $\sum\limits_{i=1}^d (d-i-1)y_i=0$. Apply the fundamental labeling of spider to $G$. Identify all the right end-vertices (with label $\frac{1}{2}(y_1+\cdots +y_i)$) for path of length $y_i$. We now have the graph $G$ with an induced edge labeling $f: E(G)\to [1,q]$ such that
\begin{enumerate}[(i)]
  \item each degree 2 vertex has induced label $i/2 + q-i/2 = q$ or $q - (i-1)/2 + (i+1)/2 = q+1$. Moreover, the induced label of the second or the last second vertex of each path is $q+1$.
  \item the end-vertex incident to edge $e_1$ has induced label $q$ and the remaining $d-1$ end-vertices have mutually distinct induced labels at least $q/2 + 1$ and at most $q-1$.
  \item the core has induced label
\begin{align}
f^+(u) & = y_1/2 + (y_1+y_2)/2 + (y_1+y_2+y_3)/2 + \cdots + (y_1 + y_2 + \cdots + y_{d})/2 \nonumber\\
 & =  \frac{1}{2}\sum_{i=1}^d(d-i+1)y_i=\frac{1}{2}\sum_{i=1}^d(d-i-1)y_i+q \label{eq-evenlegs}\\ &= q.\nonumber
\end{align}
\end{enumerate}
\nt Thus, $f$ is a local antimagic labeling with $c(f) = d+1$ and $\chi_{la}(G)\le d+1$. Hence, the theorem holds by  Theorem~\ref{thm-pendant}.
\end{proof}

\newpage
\begin{example} Consider $Sp(2,4,6,4,20)$ which satisfies the hypothesis of Theorem~\ref{thm-evenlegs}.
According to the fundamental labeling we have:\\
36, 1;\\ %$P_3$: 
35, 2, 34, 3;\\%$P_5$: 
33, 4, 32, 5 ,31 ,6;\\%$P_7$: 
30, 7, 29, 8;\\%$P_5$: 
28, 9, 27, 10, 26, 11, 25, 12, 24, 13, 23, 14, 22, 15, 21, 16, 20, 17, 19, 18.\\%$P_{21}$: 
$(37,36,36,33,30)$ \rsq %The right vertex of each path are identified as the core. 
\end{example}

\begin{corollary} \label{cor-evenlegs} Let $y_1, \ldots,y_d\ge 2$ be evens. If $\sum\limits_{i=1}^{d-1} (d-i)y_i=\sum\limits_{i=k+1}^{d} y_i$, where $0\le k\le d-1$, then $\chi_{la}(Sp(y_1,\ldots,y_d))=d+1$.
\end{corollary}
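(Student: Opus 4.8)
The plan is to run the fundamental labeling of $G=Sp(y_1,\ldots,y_d)$ used in Theorem~\ref{thm-evenlegs} verbatim, and to show that the hypothesis forces the induced core label to coincide with one of the $d$ pendant labels, so that the core contributes no new color. Since Theorem~\ref{thm-pendant} already supplies $\chi_{la}(G)\ge d+1$, it suffices to produce a local antimagic $(d+1)$-labeling.

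First I would read off the colors the fundamental labeling produces. Because every $y_i$ is even, each partial sum $y_1+\cdots+y_j$ is even, and the pendant (left end-vertex) of the $i$-th leg acquires the induced label $\ell_i=q-\tfrac12(y_1+\cdots+y_{i-1})$; these are $d$ strictly decreasing integers with $\ell_1=q$ and $\ell_d\ge q/2+1$. Every degree-$2$ vertex is colored $q$ or $q+1$, and in particular each neighbour of the core (a second-to-last vertex) is colored $q+1$. The core label was already computed in~\eqref{eq-evenlegs} as $f^+(u)=\tfrac12\sum_{i=1}^d(d-i+1)y_i$.

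Next I would feed the hypothesis into this core value. Peeling off the constant term gives $\sum_{i=1}^d(d-i+1)y_i=\sum_{i=1}^{d-1}(d-i)y_i+q$, and substituting $\sum_{i=1}^{d-1}(d-i)y_i=\sum_{i=k+1}^{d}y_i=q-\sum_{i=1}^{k}y_i$ collapses the core label to $f^+(u)=q-\tfrac12\sum_{i=1}^{k}y_i=\ell_{k+1}$. Hence the core is colored exactly like the pendant of the $(k+1)$-st leg; the case $k=0$ returns $f^+(u)=q=\ell_1$, recovering Theorem~\ref{thm-evenlegs}.

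Finally I would check validity and count colors. As every leg has length at least $2$, the core and the pendant of the $(k+1)$-st leg are non-adjacent, while the only neighbours of the core are colored $q+1\ne\ell_{k+1}$ (here $\ell_{k+1}\le q$), so the labeling is a valid local antimagic labeling. The full color set is $\{\ell_1,\ldots,\ell_d\}\cup\{q,q+1\}=\{\ell_1,\ldots,\ell_d,q+1\}$ since $q=\ell_1$, and the core adds nothing as $f^+(u)=\ell_{k+1}$; this set has exactly $d+1$ elements, so $c(f)=d+1$ and therefore $\chi_{la}(G)=d+1$. The only genuinely new step beyond Theorem~\ref{thm-evenlegs} — and the one I would verify most carefully — is the index identity $f^+(u)=\ell_{k+1}$, ensuring the core falls on an already-present pendant color rather than spawning a $(d+2)$-nd color or clashing with its $q+1$ neighbours.
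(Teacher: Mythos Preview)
Your argument is correct and follows essentially the same route as the paper: you reuse the fundamental labeling from Theorem~\ref{thm-evenlegs}, identify the pendant labels as $\ell_i=q-\tfrac12\sum_{j<i}y_j$, and then derive from the hypothesis that $f^+(u)=\ell_{k+1}$, so the core introduces no new color. The only difference is cosmetic: you spell out the verification that the core's neighbours are colored $q+1\ne\ell_{k+1}$ and that the color set has exactly $d+1$ elements, whereas the paper leaves these checks implicit by appealing directly to the proof of Theorem~\ref{thm-evenlegs}.
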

\begin{proof} Keep the notation in the proof of Theorem~\ref{thm-evenlegs}.
From the labeling $f$, we have known that each pendant vertex has induced label $q-\frac{1}{2}\sum\limits_{i=1}^{k} y_i$, $0\le k\le d-1$. Note that when $k=0$ the empty summation is treated as zero. Thus, $\chi_{la}(Sp(y_1, \dots, y_d)=d+1$ if $f^+(u)=q-\frac{1}{2}\sum\limits_{i=1}^{k} y_i$ for some $0\le k\le d-1$.
From \eqref{eq-evenlegs}, we have $\frac{1}{2}\sum\limits_{i=1}^{d}(d-i-1)y_i + q= q-\frac{1}{2}\sum\limits_{i=1}^{k} y_i$.
It is equivalent to $\sum\limits_{i=1}^{d}(d-i)y_i =\sum\limits_{i=1}^{d}y_i -\sum\limits_{i=1}^{k} y_i$ or  $\sum\limits_{i=1}^{d-1}(d-i)y_i=\sum\limits_{i=k+1}^{d} y_i$.
\end{proof}
\begin{corollary}\label{cor-even+1odd}
Let $y_1, \ldots, y_d\ge 2$ be evens. If $\sum\limits_{i=1}^{d-1} (d-i)y_i=\sum\limits_{i=k+1}^{d} y_i$, where $1\le k\le d-1$, then $\chi_{la}(Sp(y_1,\ldots,y_d+1))=d+1$.
\end{corollary}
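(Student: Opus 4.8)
The plan is to prove the lower bound via the general pendant bound and to establish the upper bound by exhibiting an explicit local antimagic labeling obtained from the fundamental labeling, exactly as in Corollary~\ref{cor-evenlegs} but now accounting for the fact that the last leg has the \emph{odd} length $y_d+1$. Write $G'=Sp(y_1,\ldots,y_{d-1},y_d+1)$, so that $G'$ has $d$ pendants and size $q'=q+1$ where $q=\sum_{i=1}^d y_i$ (note $q$ is even, so $q/2$ is an integer). Since $G'$ is not $K_2$, Theorem~\ref{thm-pendant} immediately gives $\chi_{la}(G')\ge d+1$, and everything reduces to producing a labeling with exactly $d+1$ colors.

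For the upper bound I would apply the fundamental labeling to $G'$ with respect to its own size $q'$, keeping the first $d-1$ (even) legs arranged as before and letting the odd last leg carry the remaining labels. First I would record the induced labels: every degree-$2$ vertex again receives $q'=q+1$ or $q'+1=q+2$ (the second vertex of each leg being $q+2$), and the pendant of leg $i$ receives $q+1-\tfrac12\sum_{l<i}y_l$; these $d$ values are strictly decreasing, hence distinct, with the pendant of leg~$1$ equal to $q+1$. The one genuinely new computation is the core label: because the last leg is odd, its core-incident edge (the globally last edge, of odd index $q'$) carries the small label $q/2+1$ rather than a large one, and a short sum built on \eqref{eq-evenlegs} gives $f^+(u)=\tfrac12\sum_{i=1}^{d-1}(d-i)y_i+\tfrac{q}{2}+1$. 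Substituting the hypothesis $\sum_{i=1}^{d-1}(d-i)y_i=\sum_{i=k+1}^{d}y_i$ and simplifying, this collapses to $f^+(u)=q+1-\tfrac12\sum_{l=1}^{k}y_l$, which is exactly the pendant label of leg $k+1$.

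It then remains to count colors and verify properness. The internal vertices contribute the two colors $\{q+1,q+2\}$; the pendant of leg~$1$ reuses $q+1$; the pendants of legs $2,\ldots,d$ give $d-1$ further distinct values below $q+1$; and the core reuses the pendant label of leg $k+1$ (legitimate since $k+1\in\{2,\ldots,d\}$). This totals $d+1$ colors, so $c(f)=d+1$. For properness, the only nontrivial adjacencies are at the core: its neighbours on the $d-1$ even legs have color $q+2$, its neighbour on the odd leg has color $q'=q+1$, and the core itself has color $q+1-\tfrac12\sum_{l=1}^{k}y_l$.

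This is where the hypothesis $k\ge1$ (rather than the $k\ge0$ of Corollary~\ref{cor-evenlegs}) is essential, and I expect this to be the main point to get right: when $k\ge1$ the core color is at most $q<q+1$, so it differs from every one of its neighbours, whereas $k=0$ would force the core color to equal $q+1$ and clash with its neighbour on the odd leg. All remaining adjacencies are immediate (each pendant is joined only to its second vertex of color $q+2$, and the colors alternate between $q+1$ and $q+2$ along each leg), and since every leg has length at least $2$ the core and the pendant of leg $k+1$ are non-adjacent despite sharing a color. Hence $f$ is a local antimagic $(d+1)$-labeling, which together with the lower bound yields $\chi_{la}(G')=d+1$.
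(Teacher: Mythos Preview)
Your argument is correct and is exactly the approach the paper intends: the paper states this corollary without proof because it follows from the same fundamental-labeling computation as Corollary~\ref{cor-evenlegs}, with the single change that the last (now odd) leg contributes $q/2+1$ at the core instead of $(y_1+\cdots+y_d)/2$, and you have filled in precisely those details, including the crucial observation that $k\ge 1$ is what keeps the core color strictly below $q+1$ and hence distinct from its neighbour on the odd leg.
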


\begin{example}
Consider $y_1=4$, $y_2=2$, $y_3=4$, $y_4=2$.
\begin{enumerate}[1.]
\item  By Theorem~\ref{thm-evenlegs} or Corollary~\ref{cor-evenlegs} with $k=0$, we get $y_5=20$ and so $\chi_{la}(Sp(4,2,4,2,20))=6$.
According to the fundamental labeling we get the following labeling of each leg:\\
32, 1, 31, 2;\\%$P_5$: 
30, 3;\\%$P_3$: 
29, 4, 28, 5;\\%$P_5$: 
27, 6;\\%$P_3$: 
26, 7, 25, 8, 24, 9, 23, 10, 22, 11, 21, 12, 20, 13, 19, 14, 18, 15, 17, 16.\\%$P_{21}$: 
$(33, 32, 30, 29, 27, 26)$ % The right vertex of each path are identified as the core. The induced colors are 33, 32, 30, 29, 27, 26.

\item By Corollary~\ref{cor-evenlegs} with $k=1$, we get $y_5=24$ and so $\chi_{la}(Sp(4,2,4,2,24))=6$. According to the fundamental labeling we get the following labeling of each leg:\\
36, 1, 35, 2, ;\\%$P_5$: 
34, 3;\\%$P_3$: 
33, 4, 32, 5;\\%$P_5$: 
31, 6;\\%$P_3$: 
30, 7, 29, 8, 28, 9, 27, 10, 26, 11, 25, 12, 24, 13, 23, 14, 22, 15, 21, 16, 20, 17, 19, 18.\\%$P_{25}$: 
$(37, 36, 34, 33, 31, 30)$  %The right vertex of each path are identified as the core. The induced colors are 37, 36, 34, 33, 31, 30.

\item By Corollary~\ref{cor-even+1odd}, from Case~2, $\chi_{la}(Sp(4,2,4,2,25))=6$.
According to the fundamental labeling we get the following labeling of each leg:\\
37, 1, 36, 2, ;\\%$P_5$: 
35, 3;\\%$P_3$: 
34, 4, 33, 5;\\%$P_5$: 
32, 6;\\%$P_3$: 
31, 7, 30, 8, 29, 9, 28, 10, 27, 11, 26, 12, 25, 13, 24, 14, 23, 15, 22, 16, 21, 17, 20, 18, 19.\\%$P_{26}$: 
$(38, 37, 35, 34, 32, 31)$  %The induced colors are 38, 37, 35, 34, 32, 31. %The right vertex of each path are identified as the core. 
\end{enumerate} 
\noindent By Corollary~\ref{cor-even+1odd} with $k=4$, we also get $\chi_{la}(Sp(4,2,4,2,33))=6$.\rsq
\end{example}

\begin{example} Let $y_1=4$ $y_2=6$. Using $k=1$, $2y_1+y_2=14=y_2+y_3$ gives $y_3=8$. So, we get $Sp(4,6,8)$ with the following labeling.\\
18, 1, 17, 2;\\%$P_5$: 
16, 3, 15, 4, 14, 5;\\%$P_7$: 
13, 6, 12, 7, 11, 8, 10, 9\\%$P_9$: 
$(19,18,16,13)$   %is a local antimagic $4$-labeling for $Sp(4,6,8)$ with induced colors 19, 18, 16, 13.

\nt By Corollary~\ref{cor-even+1odd}, we also obtain $\chi_{la}(Sp(4,6,13))=\chi_{la}(Sp(4,6,19))=4$.
\rsq
\end{example}

\begin{corollary}\label{cor-2even} If $a,b\ge 2$ are even, then $\chi_{la}(Sp(a,b,a)) =  \chi_{la}(Sp(a,b,2a)) = \chi_{la}Sp(a,b,2a+b)) = \chi_{la}(Sp(a,b,a+1))= \chi_{la}(Sp(a,b,2a+1)) = \chi_{la}Sp(a,b,2a+b+1)) = 4$. \end{corollary}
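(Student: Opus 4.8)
The plan is to read off all six spider graphs as instances of the two preceding corollaries applied in the case $d=3$, so that no new labeling needs to be constructed from scratch. Throughout, $a,b\ge 2$ are even, so the hypothesis ``$y_1,\ldots,y_{d-1}$ even'' of Corollary~\ref{cor-evenlegs} and Corollary~\ref{cor-even+1odd} is met with $(y_1,y_2)=(a,b)$ and $d=3$. For $d=3$ the defining relation of Corollary~\ref{cor-evenlegs} reads $\sum_{i=1}^{2}(3-i)y_i=\sum_{i=k+1}^{3}y_i$, i.e.\ $2a+b=\sum_{i=k+1}^{3}y_i$, and the third-leg length is the even number $y_3$ determined by the choice of $k\in\{0,1,2\}$.

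First I would dispatch the three ``even third leg'' graphs. Taking $k=2$ gives $2a+b=y_3$, but this is not one of the listed targets; instead the three useful values come from $k=0,1$ together with the small symmetric relation. Concretely, $k=1$ yields $2a+b=y_2+y_3=b+y_3$, hence $y_3=2a$, giving $\chi_{la}(Sp(a,b,2a))=4$; $k=0$ yields $2a+b=y_1+y_2+y_3=a+b+y_3$, hence $y_3=a$, giving $\chi_{la}(Sp(a,b,a))=4$; and $k=2$ yields $y_3=2a+b$, giving $\chi_{la}(Sp(a,b,2a+b))=4$. In each case the third leg $y_3$ is even (being $a$, $2a$, or $2a+b$ with $a,b$ even), so Corollary~\ref{cor-evenlegs} applies and each color number equals $d+1=4$.

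Next I would obtain the three ``odd third leg'' graphs by adding one to the even third leg via Corollary~\ref{cor-even+1odd}, whose relation is the same equation but which requires $1\le k\le d-1=2$ and produces $Sp(y_1,y_2,y_3+1)$. Using $k=1$ (so $y_3=2a$) gives $Sp(a,b,2a+1)$ with $\chi_{la}=4$; using $k=2$ (so $y_3=2a+b$) gives $Sp(a,b,2a+b+1)$ with $\chi_{la}=4$. The remaining target $Sp(a,b,a+1)$ corresponds to $y_3=a$, i.e.\ $k=0$, which lies just outside the range $k\ge 1$ permitted by Corollary~\ref{cor-even+1odd}; I would verify directly that the ``$+1$'' construction underlying that corollary still produces a valid local antimagic $4$-labeling here, or equivalently re-derive it by the fundamental labeling as in Theorem~\ref{thm-evenlegs}, checking that the core color still coincides with a pendant color. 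This boundary case $k=0$ is the only place the two cited corollaries do not literally cover the claim, so verifying $Sp(a,b,a+1)$ is the main (and essentially only) obstacle; everything else is bookkeeping in the substitutions $k=0,1,2$.
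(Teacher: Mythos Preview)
Your substitution argument for five of the six cases is exactly what the paper intends (the paper gives no separate proof of the corollary): $Sp(a,b,a)$, $Sp(a,b,2a)$, $Sp(a,b,2a+b)$ come from Corollary~\ref{cor-evenlegs} with $k=0,1,2$, and $Sp(a,b,2a+1)$, $Sp(a,b,2a+b+1)$ from Corollary~\ref{cor-even+1odd} with $k=1,2$.

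Your treatment of the remaining case $Sp(a,b,a+1)$, however, has a real gap. You propose to ``verify directly that the `$+1$' construction underlying that corollary still produces a valid local antimagic $4$-labeling'' at $k=0$. It does not. Apply the fundamental labeling to $Sp(a,b,a+1)$ (size $q'=2a+b+1$, odd). The edge incident with the core on the odd leg is $e_{q'}$, with $f(e_{q'})=q'-(q'-1)/2=(q'+1)/2$, and its neighbour $e_{q'-1}$ carries $(q'-1)/2$; hence the degree-$2$ vertex between them has colour $q'$. Meanwhile the core gets
\[
f^+(u)=\tfrac12 a+\tfrac12(a+b)+\tfrac{q'+1}{2}=2a+b+1=q',
\]
which is indeed the first pendant colour (the $k=0$ coincidence), but it now equals the colour of an \emph{adjacent} vertex, so the labeling is not local antimagic. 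The smallest instance already shows this: for $Sp(2,2,3)$ the fundamental labeling $7,1;\ 6,2;\ 5,3,4$ gives core colour $1+2+4=7$, while the neighbour between labels $3$ and $4$ also has colour $7$. The restriction $k\ge 1$ in Corollary~\ref{cor-even+1odd} is there precisely to avoid this clash and cannot be waived.

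So $Sp(a,b,a+1)$ is not reachable from the Section~2 machinery at all; the paper gives no argument for it, and this particular item appears to be an over-claim in the corollary. The statement is true, but only via the later Theorem~\ref{thm-2parity} (one even and one odd leg), which is proved independently in Section~3.
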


\section{Spiders with 3 legs}

Before considering some labelings of spider of 3 legs, we show some useful labelings for a path first.

\begin{lemma}\label{lem-circ-perm}
Suppose $N\ge 2$ and
\[r\in \begin{cases}
\{1\}, & N=2;\\
\{2j\;|\; 1\le j\le N/2-1\}\cup\{1,N-1\}, & \mbox{even } N\ge 4;\\
\{2j-1\;|\; 1\le j\le (N-1)/2\}\cup\{N-1\}, & \mbox{odd } N.
\end{cases}\]
There is a circular permutation $(a_i)_{i=1}^N$ of $[1,N]$ such that the sum of two consecutive terms $a_i+a_{i+1}\in \{N, N+1, N+2\}$ for $1\le i\le N$ and $|a_N-a_1|=r$, where the indices of $a_i$'s are taken modulo $N$.
\end{lemma}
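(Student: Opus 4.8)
The plan is to recast the statement in graph-theoretic language and then produce a single explicit cyclic arrangement that realises every admissible gap at once. Let $H$ be the graph on vertex set $[1,N]$ in which $i\sim j$ exactly when $i\neq j$ and $i+j\in\{N,N+1,N+2\}$. A circular permutation $(a_i)_{i=1}^N$ of $[1,N]$ whose consecutive sums all lie in $\{N,N+1,N+2\}$ is precisely a Hamiltonian cycle of $H$, and the required quantity $|a_N-a_1|$ is just the absolute difference across the wrap-around edge $\{a_N,a_1\}$. Since every rotation of a Hamiltonian cycle is again such a circular permutation, it suffices to build one Hamiltonian cycle of $H$ whose edge set contains, for each admissible value $r$, an edge whose endpoints differ by $r$; one then rotates the cycle so that this edge sits in the position $\{a_N,a_1\}$, which makes both the window condition and the gap condition hold simultaneously.

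The cycle I would use is the union of the two ``complementary'' matchings with one or two bridging edges. Let $A=\{\{k,N-k\}\}$ be the set of all pairs of sum $N$ and $B=\{\{k,N+2-k\}\}$ the set of all pairs of sum $N+2$. For odd $N$ I would take $C_N=A\cup B\cup\{\{1,N\}\}$, and for even $N$ take $C_N=A\cup B\cup\{\{1,N\},\{N/2,N/2+1\}\}$. The key computational observation that makes everything transparent is that composing the two matchings shifts the small label by two: applying $A$ then $B$ sends $k\mapsto N-k\mapsto k+2$. Hence the components of $A\cup B$ are governed by the residue of $k$ modulo $2$, which forces them to be paths (no stray cycles can appear): for odd $N$ a single Hamiltonian path from $1$ to $N$, closed by $\{1,N\}$; for even $N$ two paths whose four endpoints are exactly $1,N,N/2,N/2+1$, and the two bridging edges (both of sum $N+1$) splice them into one Hamiltonian cycle. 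Every edge of $C_N$ has sum $N$, $N+1$ or $N+2$ by construction, so the window requirement is automatic.

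It then remains to read off the edge-differences of $C_N$ and match them against the statement. The sum-$N$ edges $\{k,N-k\}$ give every difference $N-2k$, and the sum-$(N+2)$ edges give the same list; together these produce exactly the differences of the same parity as $N$ up to $N-2$, namely $\{2,4,\dots,N-2\}$ for even $N$ and $\{1,3,\dots,N-2\}$ for odd $N$. The bridging edge $\{1,N\}$ contributes $N-1$, and for even $N$ the edge $\{N/2,N/2+1\}$ contributes $1$. Comparing with the three cases in the lemma, the differences realised by $C_N$ are precisely the admissible set of $r$, so choosing the right edge and rotating finishes the argument, with $N=2$ treated as a degenerate base case. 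The main obstacle is the bookkeeping in the second paragraph: one must verify carefully, and uniformly over the parity of $N$, that $A\cup B$ breaks into exactly the claimed paths with the stated endpoints and that the bridging edges merge them into one cycle rather than several. The $k\mapsto k+2$ identity is what controls this, but the endpoint analysis (in particular how the parity of $N/2$ decides which of $N/2,N/2+1$ ends which path) should be pinned down, and it is prudent to confirm the smallest cases $N\le 5$ by hand before invoking the general scheme.
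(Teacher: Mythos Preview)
Your proposal is correct, and in fact the Hamiltonian cycle $C_N$ you build is \emph{the same cycle} the paper writes down in its four tables for $N\equiv 0,1,2,3\pmod 4$; you have simply uncovered the structure behind those tables. In the paper's sequences the consecutive sums alternate between $N$ and $N+2$ except at two places (for even $N$) or one place (for odd $N$) where the sum is $N+1$, and those exceptional edges are exactly your bridging edges $\{1,N\}$ and $\{N/2,N/2+1\}$. The difference lies entirely in the presentation and verification. The paper proceeds by brute force: it splits into four residue classes modulo $4$, writes out an explicit formula for each $a_i$, and then checks by inspection that the set of consecutive differences $\{|a_i-a_{i+1}|\}$ equals the admissible set of $r$. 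Your route is more conceptual: the shift identity $k\mapsto N-k\mapsto k+2$ under $A$-then-$B$ shows in one stroke that $A\cup B$ has no cycles (a putative cycle would force $k=k+2\ell$ for some $\ell\ge 1$), and the degree count pins down the endpoints as $\{1,N\}$ (odd $N$) or $\{1,N,N/2,N/2+1\}$ (even $N$). This buys you a uniform argument that avoids the mod-$4$ case split and makes the difference set fall out immediately from the form of the edges, whereas the paper's approach trades insight for the security of fully explicit sequences that can be checked line by line. The one point you flag as needing care---that for even $N$ the two bridging edges join the two paths into a single cycle rather than two---is settled by observing that the path through $1$ visits vertices of a single parity among $\{1,\dots,N/2\}\cup\{N/2+1,\dots,N\}$ on each side, so $1$ and $N$ (likewise $N/2$ and $N/2+1$) always lie on different paths; this is routine once stated.
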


\begin{proof} We separate into 4 cases.
\begin{enumerate}[(a)]
\item $N=4k$, $k\ge 1$.
\[\begin{array}{|c*{8}{|c}|}
i & 1 & 2 & 3 & 4 & \cdots & 2k-2 & 2k-1 & 2k \\\hline\hline
a_i & 2k & 2k+2 & 2k-2 & 2k+4 & \cdots & 4k-2 & 2 & 4k\\\hline
a_{2k+i} & 1 & 4k-1 & 3 & 4k-3 & \cdots & 2k+3 & 2k-1 & 2k+1\\\hline
\end{array}\]
The set of difference of two consecutive terms of the obtained sequence is 
$\{|a_i-a_{i+1}|\;:\; 1\le i\le 4k\}=\{2j\;|\; 1\le j\le 2k-1\}\cup\{1,4k-1\}$. When we shift suitably the subscripts of $a_i$ we will obtain that $|a_N-a_1|=r$. For the remaining cases, we will not point out this arrangement again.

\item $N=4k+1$, $k\ge 1$.
\[\begin{array}{|c*{9}{|c}|}
i & 1 & 2 & 3 & 4 & \cdots & 2k-2 & 2k-1 & 2k & 2k+1\\\hline\hline
a_i & 2k+2 & 2k-1 & 2k+4 & 2k-3 & \cdots & 3 & 4k & 1 & \varnothing\\\hline
a_{2k+i} & 4k+1 & 2 & 4k-1 & 4 & \cdots & 2k-2 & 2k+3 & 2k & 2k+1\\\hline
\end{array}\]
The set of  difference of two consecutive terms of the obtained sequence is
$\{|a_i-a_{i+1}|\;:\; 1\le i\le 4k+1\}=\{2j-1\;|\; 1\le j\le 2k\}\cup\{4k\}$.

\item $N=4k+2$, $k\ge 0$.
\[\begin{array}{|c*{8}{|c}|}
i & 1 & 2 & 3 & 4 & \cdots & 2k-1 & 2k & 2k+1\\\hline\hline
a_i & 2k+1 & 2k+3 & 2k-1 & 2k+5 & \cdots & 3 & 4k+1 & 1\\\hline
a_{2k+i+1} & 4k+2 & 2 & 4k & 4 & \cdots & 2k+4 & 2k & 2k+2\\\hline
\end{array}\]
The set of difference of two consecutive terms of the obtained sequence is
$\{|a_i-a_{i+1}|\;:\; 1\le i\le 4k+2\}=\{2j\;|\; 1\le j\le 2k\}\cup\{1,4k+1\}$, where $k\ge 1$.
When $k=0$, this set is $\{1\}$.
\item $N=4k+3$, $k\ge 0$.
\[\begin{array}{|c*{9}{|c}|}
i & 1 & 2 & 3 & 4 & \cdots & 2k-1 & 2k & 2k+1 & 2k+2\\\hline\hline
a_i & 2k+1 & 2k+4 & 2k-1 & 2k+6 & \cdots & 3 & 4k+2 & 1 & \varnothing\\\hline
a_{2k+i+1} & 4k+3 & 2 & 4k+1 & 4 & \cdots & 2k+5 & 2k & 2k+3 & 2k+2\\\hline
\end{array}\]
The set of difference of two consecutive terms of the obtained sequence is
$\{|a_i-a_{i+1}|\;:\; 1\le i\le 4k+3\}=\{2j-1\;|\; 1\le j\le 2k+1\}\cup\{4k+2\}$.
\end{enumerate}

Suppose $|a_j-a_{j+1}|=r$. We may choose the circular permutation starting at $a_{j+1}$ and ending at $a_j$. This is a required sequence.
\end{proof}

Actually, these labelings induce local antimagic $3$-labelings of cycles accordingly.

\ms
Adding each term of the circular permutation above by $a\in\N$, we have

\begin{corollary}\label{cor-circ-perm}
Suppose $N\ge 2$, $a\in \N$ and
\[r\in \begin{cases}
\{1\}, & N=2;\\
([2,N-2]\cap \mathbb E)\cup\{1,N-1\}, & \mbox{even } N\ge 4;\\
([1,N-2]\cap\mathbb O)\cup\{N-1\}, & \mbox{odd } N,
\end{cases}\]
where $\mathbb E$ and $\mathbb O$ are the set of even and odd integers, respectively.
All integers in $[a+1,a+N]$ can be arranged as a sequence $(a_i)_{i=1}^N$ of length $N$ such that the sum of two consecutive terms $a_i+a_{i+1}\in \{2a+N, 2a+N+1, 2a+N+2\}$ for $1\le i\le N-1$ and $|a_N-a_1|=r$.
\end{corollary}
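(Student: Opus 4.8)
The plan is to obtain this statement as an immediate consequence of Lemma~\ref{lem-circ-perm} via an affine shift of the labels. First I would apply Lemma~\ref{lem-circ-perm} to produce a circular permutation $(a_i)_{i=1}^N$ of $[1,N]$ with $a_i + a_{i+1} \in \{N, N+1, N+2\}$ (indices modulo $N$) and $|a_N - a_1| = r$. At this point I would verify that the two descriptions of the admissible set for $r$ agree: the set $\{2j \mid 1 \le j \le N/2 - 1\}$ is precisely $[2, N-2] \cap \mathbb{E}$, and $\{2j - 1 \mid 1 \le j \le (N-1)/2\}$ is precisely $[1, N-2] \cap \mathbb{O}$, so the hypotheses on $r$ in the corollary match exactly the conclusions available from the lemma in all three cases ($N=2$, even $N\ge 4$, odd $N$).

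Next I would set $b_i = a_i + a$ for $1 \le i \le N$. Since $x \mapsto x + a$ is a bijection, the sequence $(b_i)_{i=1}^N$ enumerates $[a+1, a+N]$ exactly once. For each consecutive pair the sum transforms as $b_i + b_{i+1} = (a_i + a_{i+1}) + 2a$, which shifts the admissible window from $\{N, N+1, N+2\}$ to $\{2a+N, 2a+N+1, 2a+N+2\}$; restricting to $1 \le i \le N-1$ keeps all these interior sums in range. Finally, the endpoint difference is invariant under a common additive shift, so $|b_N - b_1| = |a_N - a_1| = r$, and $(b_i)_{i=1}^N$ is the required sequence.

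The one point worth isolating is the transition from the circular statement of the lemma to the linear statement of the corollary. The lemma controls all $N$ cyclic sums, including the wrap-around $a_N + a_1$, whereas the corollary asks only for the $N - 1$ interior sums together with the weaker condition $|a_N - a_1| = r$ on the two endpoints. This is deliberate: cutting the cycle open between $a_N$ and $a_1$ turns the circular permutation into a path labeling, and the retained difference condition is exactly what will later allow two such path labelings to be glued at the core of a spider. Because there is no genuine combinatorial obstacle here --- everything reduces to the lemma plus a translation --- I do not expect any hard step; the entire difficulty of this circle of ideas lives in the explicit constructions of Lemma~\ref{lem-circ-perm}.
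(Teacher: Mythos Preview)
Your proof is correct and follows exactly the paper's approach: the corollary is obtained from Lemma~\ref{lem-circ-perm} by adding $a$ to every term of the circular permutation, and your verification that the admissible sets for $r$ coincide and that sums shift by $2a$ while differences are preserved fills in precisely the details the paper leaves implicit.
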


In this section, we consider $Sp(n_1,n_2,n_3)$, spider graphs with $3$ legs. Let its three legs be $P_{n_1+1}=x_{1}\cdots x_{n_1}x_{n_1+1}$, $P_{n_2+1}= y_{1}\cdots y_{n_2}y_{n_2+1}$ and $P_{n_3+1}=z_{1}\cdots z_{n_3}z_{n_3+1}$, where $x_{n_1+1}=y_{n_2+1}=z_{n_3+1}=u$.

Let $P=x_1\cdots x_n$ and $Q=y_1\cdots y_m$, $n,m\ge 2$. The graph $PQ$ is the path obtained from $P$ and $Q$ by identifying $x_n$ with $y_1$.

\begin{lemma}\label{lem-path-4a} Let $P_{n+1}=x_1\cdots x_{n+1}$ be a path of length $n\ge 2$. Let $a\notin\{0,1\}$. There is an edge labeling $f: E(P_{n+1})\to [a,a+n-1]$ such that $f^+$ is a coloring of $P_{n+1}$ and $f^+(x_1)=a+n-2$, $f^+(x_{n+1})=a+n-1$ and $f^+(x_i)\in\{2a+n-1, 2a+n-2, 2a+n-3\}$ for $2\le i\le n$. Moreover, $f^+(x_2)=2a+n-1=f^+(x_n)$.
\end{lemma}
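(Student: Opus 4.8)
The plan is to reduce the whole construction to Corollary~\ref{cor-circ-perm}. Write $b_i$ for the label $f(x_ix_{i+1})$, $1\le i\le n$, so that $(b_i)_{i=1}^n$ is a permutation of $[a,a+n-1]$ and the induced labels are $f^+(x_1)=b_1$, $f^+(x_{n+1})=b_n$, and $f^+(x_i)=b_{i-1}+b_i$ for $2\le i\le n$. The prescribed leaf values force $b_1=a+n-2$ and $b_n=a+n-1$ at once, so the remaining $n-1$ labels, namely the block $[a,a+n-2]$, must fill the positions $b_1,\ldots,b_{n-1}$.

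The key observation is that reserving the largest label $a+n-1$ for the final edge $x_nx_{n+1}$ leaves exactly the block $[a,a+n-2]$ of $n-1$ consecutive integers, and Corollary~\ref{cor-circ-perm} applied with base $a-1$ and $N=n-1$ produces a circular arrangement of precisely these integers whose consecutive sums lie in $\{2a+n-3,\,2a+n-2,\,2a+n-1\}$, which is exactly the required window. Choosing the value $r=N-1=n-2$, which is admissible for every parity of $N$, forces the two cut-adjacent terms to differ by $n-2$, hence to be the minimum $a$ and the maximum $a+n-2$ of the block. Reading the circular permutation so that it starts at the maximum and ends at the minimum gives $b_1=a+n-2$ and $b_{n-1}=a$, with all sums $b_i+b_{i+1}$ ($1\le i\le n-2$) in the window; setting $b_n=a+n-1$ then yields $f^+(x_1)=a+n-2$, $f^+(x_{n+1})=a+n-1$, and $f^+(x_n)=b_{n-1}+b_n=2a+n-1$, which is again in the window.

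I would then verify the two remaining claims. For the ``moreover'' equality, note that since $b_1=a+n-2$ is the maximum of the block, the window bound $b_1+b_2\le 2a+n-1$ forces $b_2\le a+1$; as $b_2\ge a$ and $b_2\ne a=b_{n-1}$, this gives $b_2=a+1$ and hence $f^+(x_2)=2a+n-1=f^+(x_n)$, so this is automatic rather than needing a separate construction. For the coloring property, adjacent interior vertices satisfy $f^+(x_i)-f^+(x_{i+1})=b_{i-1}-b_{i+1}\ne 0$ since the labels are distinct, while the leaf adjacencies give $f^+(x_1)=a+n-2\ne 2a+n-1=f^+(x_2)$ and $f^+(x_{n+1})=a+n-1\ne 2a+n-1=f^+(x_n)$, both holding precisely because $a\notin\{0,1\}$; this is exactly where that hypothesis is used.

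Finally I would dispose of the cases outside the scope of the corollary: $n=2$ is immediate by taking $(b_1,b_2)=(a,a+1)$, where $x_2=x_n$ makes the ``moreover'' trivial. I expect the only genuine subtlety to be the boundary of the forcing argument for $b_2=a+1$, which requires $N\ge 3$, i.e.\ $n\ge 4$; for $n=3$ the leaf constraints force $(b_1,b_2,b_3)=(a+1,a,a+2)$ and thus $f^+(x_2)=2a+1$ rather than $2a+2$, so the equality $f^+(x_2)=f^+(x_n)$ holds only for $n=2$ and $n\ge 4$, which I would note explicitly. The main obstacle overall is conceptual rather than computational: recognizing the exact parameter match (base $a-1$, $N=n-1$, $r=n-2$) that makes the three-element sum-window of Corollary~\ref{cor-circ-perm} coincide with the target window while placing the extreme labels at the cut.
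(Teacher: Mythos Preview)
Your approach is correct and genuinely different from the paper's. The paper proves the lemma by three explicit case-by-case constructions according to $n\bmod 4$ (the even case, $n=4k+1$, and $n=4k+3$), writing down formulas for $f(x_ix_{i+1})$ and checking the induced labels by hand. You instead reduce the whole thing to Corollary~\ref{cor-circ-perm} with base $a-1$, $N=n-1$, and $r=N-1$, then append the reserved label $a+n-1$ at the end; this is cleaner, avoids the case split entirely, and reuses machinery already in place. The forcing argument for $b_2=a+1$ is a nice touch that makes the ``moreover'' clause automatic for $n\ge 4$ rather than something to verify separately.

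Your observation about $n=3$ is correct and in fact exposes a slip in the paper's own statement: with $f^+(x_1)=a+1$ and $f^+(x_4)=a+2$ forced, the only possible labeling is $(a+1,a,a+2)$, giving $f^+(x_2)=2a+1=2a+n-2$, not $2a+n-1$; the paper's $n=4k+3$ construction with $k=0$ yields exactly this, so its ``the last statement is easy to check'' is false at $n=3$. Noting this explicitly is the right call.

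One small inaccuracy: your claim that the leaf adjacencies are proper ``precisely because $a\notin\{0,1\}$'' is not quite right. With $f^+(x_2)=f^+(x_n)=2a+n-1$, the comparisons $a+n-2\ne 2a+n-1$ and $a+n-1\ne 2a+n-1$ are equivalent to $a\ne -1$ and $a\ne 0$, respectively; the value $a=1$ causes no clash in your construction. So the hypothesis actually needed is $a\notin\{-1,0\}$ (equivalently $a\ge 2$ if $a$ is assumed positive), and the paper's stated hypothesis $a\notin\{0,1\}$ appears to be another minor slip.
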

\begin{proof}
Suppose $n=2k$ for some $k\ge 1$. Label the edges by
\[\begin{array}{c|*{9}{|c}|}
i & 1 &  2 & 3 & 4 & \cdots & 2k-3 & 2k-2& 2k-1 & 2k\\\hline\hline
x_ix_{i+1} & a+2k-2 & a+1 & a+2k-4 & a+3& \cdots & a+2 & a+2k-3& a& a+2k-1\\\hline\end{array}\]
Precisely,
\[f(x_ix_{i+1})=
\begin{cases}
a+2k-i-1 & \mbox{ if $i$ is odd};
\\
a+i-1 & \mbox{ if $i$ is even}.
\end{cases}\]
It is easy to see that $f^+(x_1)=a+2k-2$,  $f^+(x_{2k+1})=a+2k-1$,  $f^+(x_i)=2a+2k-1$ for even $i$, $f^+(x_i)=2a+2k-3$ for odd $i\in[3, 2k-1]$. Clearly $f^+$ is a $4$-coloring.

Suppose $n=4k+1$ for some $k\ge 1$.
Label the edges by
\[\begin{array}{c|*{7}{|c}||c||}
i & 1 &  2 & 3 & 4 & \cdots & 2k-1 & 2k & 2k+1\\ \hline\hline
x_ix_{i+1} & a+4k-1 & a+1 & a+4k-3 & a+3& \cdots & a+2k+1 & a+2k-1 & a+2k\\\hline\end{array}\]
\[\begin{array}{c||c|*{9}{|c}|}
i & 2k+1 & 2k+2 & 2k+3 & \cdots  & 4k-2 & 4k-1 & 4k& x_{4k+1}x_{4k+2}\\\hline\hline
x_ix_{i+1} &  a+2k & a+2k-2 & a+2k+2 & \cdots & a+2 & a+4k-2& a& a+4k\\\hline\end{array}\]
Precisely,
\[f(x_ix_{i+1})=
\begin{cases}
a+4k-i & \mbox{ if $i$ is odd and $1\le i\le 2k-1$};\\
a+2k & \mbox{ if }i=2k+1;\\
a+i-1 & \mbox{ if $i$ is odd and $2k+3\le i\le 4k+1$};\\
a+i-1 & \mbox{ if $i$ is even and $2 \le i\le 2k$};\\
a+4k-i & \mbox{ if $i$ is even and $2k+2 \le i\le 4k$}.
\end{cases}\]
It is easy to see that $f^+(x_1)=a+4k-1$,  $f^+(x_{4k+2})=a+4k$,  $f^+(x_i)=2a+4k$ for even $i\in[2, 2k]$, $f^+(x_i)=2a+4k-2$ for even $i\in[2k+2, 4k]$, $f^+(x_i)=2a+4k-2$ for odd $i\in[3, 2k-1]$, $f^+(x_{2k+1})=2a+4k-1$, $f^+(x_i)=2a+4k$ for odd $i\in[2k+3, 4k+1]$. Clearly $f^+$ is a $5$-coloring.

Suppose $n=4k+3$ for some $k\ge 0$.
Label the edges by
\[\begin{array}{c|*{8}{|c}||c||}
i & 1 &  2 & 3 & 4 & \cdots & 2k-1 & 2k & 2k+1 & 2k+2\\ \hline\hline
x_ix_{i+1} & a+4k+1 & a+1 & a+4k-1 & a+3& \cdots & a+2k+3 & a+2k-1 & a+2k+1 & a+2k \\\hline\end{array}\]
\[\begin{array}{c||c|*{9}{|c}|}
i & 2k+2 & 2k+3 & 2k+4 & \cdots & 4k & 4k+1& 4k+2& 4k+3\\\hline\hline
x_ix_{i+1} & a+2k & a+2k+2 &a+2k-2 & \cdots & a+2 & a+4k& a& a+4k+2\\\hline\end{array}\]
Precisely,
\[f(x_ix_{i+1})=
\begin{cases}
a+4k+2-i & \mbox{ if $i$ is odd and $1\le i\le 2k+1$};\\
a+i-1 & \mbox{ if $i$ is odd and $2k+3\le i\le 4k+3$};\\
a+i-1 & \mbox{ if $i$ is even and $2 \le i\le 2k$};\\
a+4k+2-i & \mbox{ if $i$ is even and $2k+2 \le i\le 4k+2$}.
\end{cases}\]
It is easy to see that $f^+(x_1)=a+4k+1$,  $f^+(x_{4k+4})=a+4k+2$,  $f^+(x_i)=2a+4k+2$ for even $i\in[2, 2k]$, $f^+(x_{2k+2})=2a+4k+1$, $f^+(x_i)=2a+4k$ for even $i\in[2k+4, 4k+2]$, $f^+(x_i)=2a+4k$ for odd $i\in[3, 2k+1]$, $f^+(x_i)=2a+4k+2$ for odd $i\in[2k+3, 4k+3]$. Clearly $f^+$ is a $5$-coloring.

The last statement is easy to check.
\end{proof}

\begin{theorem}\label{thm-2el} $\chi_{la}(Sp(2,2+2m,2+2m+k))=4$ for $m\ge 0$ and $k\ge 2$.
\end{theorem}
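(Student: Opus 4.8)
The plan is to get the lower bound for free and then exhibit an explicit local antimagic $4$-labeling. Since $Sp(2,2+2m,2+2m+k)$ has exactly $d=3$ legs and hence three pendant vertices, Theorem~\ref{thm-pendant} (equivalently Corollary~\ref{cor-spider}) gives $\chi_{la}\ge 4$ at once, so the entire content is to construct a labeling $f$ with $c(f)=4$. I write $q=6+4m+k$ for the size, let $u$ be the core, and let the short leg of length $2$ be $x_1x_2u$.

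The main idea is to merge the two long legs (of lengths $y_2=2+2m$ and $y_3=2+2m+k$) into a single path $W$ through the core via the $PQ$ construction defined above; then $W$ has length $L=y_2+y_3=q-2$ and $u$ sits at the interior position $y_2+1=2m+3$, which is at distance at least $2$ from each end because $y_2\ge 2$ and $y_3\ge 4$. I would label $W$ by Lemma~\ref{lem-path-4a} with the smallest admissible value $a=2$ (allowed since $a\notin\{0,1\}$), so that $W$ receives the labels $[2,q-1]$ bijectively, its two end-vertices get colors $q-2$ and $q-1$, and every interior vertex gets a color in $\{q-1,q,q+1\}$. The key accounting point is that with $a=2$ the endpoint color $a+L-1$ coincides with one of the interior colors, so $W$ on its own already realizes exactly the four target colors $\{q-2,q-1,q,q+1\}$, whether $L$ is even or odd. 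The two leftover labels $\{1,q\}$ go to the short leg, with $q$ on the pendant edge $x_1x_2$ and $1$ on $x_2u$; this makes $f^+(x_1)=q$, $f^+(x_2)=q+1$, and $f^+(u)$ equal to the $W$-sum at $u$ increased by $1$.

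The crux is to pin down the $W$-color of $u$ so that $f^+(u)$ lands on exactly one of the four colors and differs from all three of its neighbors. Since $u$ is interior and not the unique ``middle'' vertex of Lemma~\ref{lem-path-4a}, its $W$-sum is $q-1$ or $q+1$; only $q-1$ is admissible, for then $f^+(u)=q$ while its two $W$-neighbors and $x_2$ all carry $q+1$, giving a valid coloring with colors exactly $\{q-2,q-1,q,q+1\}$. I expect the difficulty to concentrate here: the position of $u$ relative to the middle-color vertex depends on $L\bmod 4$ (equivalently on the parity of $k$) and on $m$, and in the smallest boundary cases (for instance $k=3,\ m=0$, i.e.\ $Sp(2,2,5)$) the naive placement makes a neighbor of $u$ carry the middle color $q$ and clash with $f^+(u)=q$. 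This is resolved by exploiting the symmetry of the two long legs as the two arms of $W$: swapping which long leg is the left arm relocates $u$ to a position with $W$-sum $q-1$ whose two neighbors both have $W$-sum $q+1$. I would therefore organize the verification as a short case analysis on $k\bmod 2$ (refined to $L\bmod 4$ where needed), choosing the orientation of $W$ accordingly, and in each case check three routine facts: that $f$ is a bijection onto $[1,q]$, that adjacent vertices along $W$ and along the short leg get distinct colors by Lemma~\ref{lem-path-4a}, and that $f^+(u)=q$ differs from its three neighbors. If one prefers to prescribe the junction sums directly rather than read colors off Lemma~\ref{lem-path-4a}, Corollary~\ref{cor-circ-perm} (with its control of the end-difference $r$) could serve as an alternative engine for labeling $W$.
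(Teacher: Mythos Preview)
Your plan is sound and, once the case analysis is carried out, gives a correct proof; in fact the split collapses entirely if you always put the longer leg $y_3=2+2m+k$ first in $W$, since then $u$ sits at position $2m+k+3$, which lies beyond the midpoint of $W$ and (for every $k\ge 2$ and $m\ge 0$) has $W$-sum $q-1$ with both $W$-neighbours carrying $q+1$. So no orientation swap is ever needed.

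The paper takes a genuinely different route. Rather than running Lemma~\ref{lem-path-4a} through the core, it decomposes each leg into pieces: an initial length-$2$ segment on every leg carrying the extreme labels $\{q,q-1,q-2\}$ against $\{1,2,3\}$, two parallel length-$2m$ zig-zag blocks on the long legs, and a final tail $R_{k+1}$ of length $k$ on the longest leg. Lemma~\ref{lem-path-4a} is applied only to this tail, with label set $[2m+4,\,2m+k+3]$. Because $u$ is never an interior vertex of the Lemma-labelled piece, the three labels incident to $u$ are explicit and the verification is a direct computation with no positional case analysis. Your approach is more economical---one global application of Lemma~\ref{lem-path-4a} covers almost the whole graph---at the price of locating $u$ relative to the lemma's middle vertex; the paper's construction is more hands-on but keeps $u$ out of the Lemma-labelled region by design.
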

\begin{proof} Let $f$ be the required labeling. Now  $q=6+4m+k$.
Let $P^{(1)}_{3}=x_{1}x_2 x_{3}$, $P^{(2)}_{3}= y_{1}y_2 y_{3}$ and $P^{(3)}_{3}=z_{1}z_2 z_{3}$, $Q^{(1)}_{2m+1}=v_1\cdots v_{2m}v_{2m+1}$,
$Q^{(2)}_{2m+1}=w_1\cdots w_{2m}w_{2m+1}$, and $R_{k+1}=u_1\cdots u_ku_{k+1}$.

We label $P^{(j)}_{3}$ by $f(x_1x_2)=q$, $f(x_2x_3)=1$, $f(y_1y_2)=q-1$, $f(y_2y_3)=2$, $f(z_1z_2)=q-2$, $f(z_2z_3)=3$ and $Q^{(l)}_{2m+1}$, $1\le l\le 2$, by
\[\begin{array}{c|*{7}{|c}|}
i & 1 & 2 & 3 & 4 & \cdots & 2m-1 & 2m \\\hline\hline
v_iv_{i+1} & q-3n & 3n+1 & q-3n-2 & 3n+3 & \cdots & q-3n-2m+2 & 3n+2m-1\\ \hline
w_iw_{i+1} & q-3n-1 & 3n+2 & q-3n-3 & 3n+4 &\cdots & q-3n-2m+1 & 3n+2m \\\hline
\end{array}\]
The vertex labels of $v_{1}\cdots v_{2m}$ and $w_{1}\cdots w_{2m}$ are $q+1$ and $q-1$ alternatively.

Let $P_3=P^{(1)}_3$, $P_{3+2m}=P^{(2)}_3Q^{(1)}_{2m+1}$ and $P_{3+2m+k}=P^{(3)}_3Q^{(2)}_{2m+1}R_{k+1}$.
By Lemma~\ref{lem-path-4a}, we may label $R_{k+1}$ by $[4+2m, 3+2m+k]$ such that $f^+(u_{k+1})=3+2m+k$, $f^+(u_1)=2+2m+k$ and $f^+(u_2)=7+4m+k=f^+(u_k)$. Let the combined labeling still be denoted by $f$. Then $f^+(z_{3})=f^+(y_{3})=q-1$,
$f^+(w_{2m+1})=(3+2m)+(2+2m+k)=5+4m+k$ and $f^+(u)=1+(2+2m)+(3+2m+k)=6+4m+k$.
\end{proof}

\begin{theorem}\label{thm-eol} Suppose $n\ge 1$, $m\ge 0$ and $l\ge m+2$. $\chi_{la}(Sp(2n,2n+2m+1,l))=4$.
\end{theorem}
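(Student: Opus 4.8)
\section*{Proof proposal for Theorem~\ref{thm-eol}}

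The target is to match the trivial lower bound with an explicit $4$-labeling. The lower bound $\chi_{la}(G)\ge 4$ holds because $G=Sp(2n,2n+2m+1,l)$ has exactly three pendant vertices, so $\chi_{la}(G)\ge d+1=4$ by Theorem~\ref{thm-pendant} (equivalently by the left half of Corollary~\ref{cor-spider}, which also caps everything at $5$). Everything else is the construction of a local antimagic labeling $f$ with $c(f)=4$, and I would build it in the spirit of Theorem~\ref{thm-2el}, aiming throughout for the four colors $\{q-2,q-1,q,q+1\}$, where $q=4n+2m+1+l$ denotes the size of $G$.

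The construction has three ingredients. First, at each of the three pendants I attach a length-$2$ stub: on leg $j$ the pendant edge receives label $q-j+1$ and the following edge receives label $j$, for $j=1,2,3$. This pins the three pendant colors to the distinct values $q,q-1,q-2$ and makes each of the three second vertices have the common color $q+1$, using up the labels $\{1,2,3,q-2,q-1,q\}$ and leaving precisely $[4,q-3]$ for the remaining $q-6=4n+2m+l-5$ edges. Second, for the free leg of length $l$ I would call Lemma~\ref{lem-path-4a} on its core-end block: with the shift parameter chosen so that $2a+(\text{block length})=q+2$, the even/odd interior colors guaranteed by that lemma become exactly $q-1$ and $q+1$, introducing no new color. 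Third, the two ``matched'' legs of lengths $2n$ and $2n+2m+1$ (whose lengths differ by the odd number $2m+1$) together with the remaining middle part of leg $l$ are filled with a balanced alternating assignment of the labels $[4,q-3]$ of the type furnished by Corollary~\ref{cor-circ-perm}, so that every remaining degree-$2$ vertex alternates between $q-1$ and $q+1$; the right way to organise this, so as to avoid a parity defect, is discussed below. Finally I arrange the three labels meeting at the core to sum to $q$, so that $f^+(u)=q$; the whole palette is then $\{q-2,q-1,q,q+1\}$ and $c(f)=4$.

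The step I expect to be the real obstacle is controlling parity so that no fifth color appears and so that the coloring stays proper at the core. The leg of length $2n+2m+1$ is odd, and a standalone balanced labeling of an odd path produces a ``defect'' vertex carrying a fifth color, exactly as in the odd case of Lemma~\ref{lem-path-4a}; to kill this defect I would not treat that leg in isolation but merge it with the even leg of length $2n$ into a single path of (odd) length $4n+2m+1$ running through the core, labelling that path as a whole so that the core itself plays the role of the interior vertex where the two phases meet. The delicate checks are then: (i) that the merged labeling lands on $q+1$ (not $q$) at each of the three core-neighbours, so that the core color $q$ is properly separated from its neighbours; (ii) that the seam vertices where a stub meets a middle block, and where a middle block meets the Lemma~\ref{lem-path-4a} end block, receive colors already in $\{q-1,q+1\}$; and (iii) that the three central labels can indeed be chosen to sum to $q$. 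This forces a short case analysis on the parity of $l$, with the degenerate subcases $n=1$ (leg $1$ reduces to a bare stub whose second edge is already the core edge) and $m=0$ handled separately. The hypothesis $l\ge m+2$ enters precisely here: it is the analogue of the condition $k\ge 2$ in Theorem~\ref{thm-2el}, guaranteeing that the free leg is long enough to carry both a balanced middle block and a Lemma~\ref{lem-path-4a} end block of length at least $2$ with disjoint label ranges, which is what makes the partition of $[4,q-3]$ and the core-sum $q$ simultaneously realizable. Once these checks go through, $\chi_{la}(G)\le 4$, and with the lower bound the theorem follows.
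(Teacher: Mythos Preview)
Your target palette $\{q-2,q-1,q,q+1\}$ with $f^+(u)=q$ is infeasible, and this is a genuine gap rather than a detail to be patched. Once your three stubs consume the labels $\{1,2,3,q-2,q-1,q\}$, every core-incident edge on a leg of length $\ge 3$ must carry a label from $[4,q-3]$, so the core sum is at least $4+5+6=15$ when all three legs exceed length $2$, and at least $1+4+5=10$ when one leg has length $2$. For small admissible instances this already exceeds $q$: e.g.\ $Sp(2,3,3)$ has $q=8$ and forced core sum in $\{10,11,12\}$; $Sp(4,5,3)$ has $q=12$ and forced core sum $\ge 15$. So the step ``arrange the three labels meeting at the core to sum to $q$'' simply cannot be carried out, and with it the whole four-color scheme collapses. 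The merging idea (treating legs $1$ and $2$ as a single path through $u$ via Corollary~\ref{cor-circ-perm}) also does not settle the core color, since $u$ has degree $3$: the circular-permutation lemma controls sums of \emph{two} consecutive labels, not the triple sum you need at $u$.

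The paper's construction avoids this obstruction by abandoning the symmetric three-stub idea. It places $q$ only at the pendant of the third leg, so that $f^+(z_2)=q+x$ with $x=f(z_2z_3)$ becomes the single ``large'' color, and then forces $f^+(u)=q+x$ as well; the flexibility is in $x$, not in trying to hit the fixed value $q$. The first two legs are labeled by interleaving $\{1,\dots,2n\}$ with $\{q-1,\dots,q-2n\}$ (and an extension $Q_{2m+2}$ for the odd leg), giving interior colors in $\{q-2,q-1,q\}$. The remaining $l-1$ edges of leg $3$ are filled by Corollary~\ref{cor-circ-perm} with $a=2n+m$, $N=l-1$, choosing the end-difference $r$ so that $f^+(u)$ matches $q+x$; this is where the hypothesis $l\ge m+2$ enters, to make the required $r$ lie in the admissible set of the corollary. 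Lemma~\ref{lem-path-4a} is not used here at all. If you want to repair your approach, the key change is to let the core carry the large fourth color (paired with $z_2$), not $q$.
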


\begin{proof}
Here the number of edges is $q=4n+2m+l+1$.
Let $P^{(1)}_{2n+1}=x_{1}\cdots x_{2n}x_{2n+1}$, $P^{(2)}_{2n+1}= y_{1}\cdots y_{2n+1}$ and $Q_{2m+2}=v_1\cdots v_{2m}v_{2m+2}$. Also, let the last leg of the spider be $P_{l+1}=z_1\cdots z_{l+1}$. Let $f$ be a required labeling. Firstly, let $f(z_1z_2)=q$, $f(z_2z_3)=x$ and $f(z_{l}z_{l+1})=y$.

We label $P^{(j)}_{2n+1}$ and $Q_{2m+2}$, $1\le j\le 2$, by
\[\begin{array}{c|*{7}{|c}|}
i & 1 & 2 & 3 & 4 & \cdots & 2n-1 & 2n\\\hline\hline
x_ix_{i+1} & q-1 & 1& q-3 & 3 & \cdots & q-2n+1 & 2n-1\\\hline
y_iy_{i+1} & q-2 & 2 & q-4 & 4 & \cdots & q-2n & 2n\\ \hline
\end{array}\]
\[\T{10}\begin{array}{c|*{9}{|c}|}
i & 1 & 2 & 3 & 4 & \cdots &2m-2 & 2m-1 & 2m & 2m+1\\\hline\hline
v_iv_{i+1} & q-2n-1 & 2n+1 & q-2n-2 & 2n+2 & \cdots &2n+m-1 & q-2n-m & 2n+m & q-2n-m-1\\ \hline
\end{array}\]

Following we shall label all $l-1$ unlabeled edges of the path $P_{l+1}=z_1\cdots z_{l+1}$ by integers in\break $[2n+m+1, 2n+m+l-1]=[a+1, a+N]$, where $a=2n+m$ and $N=l-1$.

\begin{enumerate}[(a)]
\item Suppose $l\equiv m\pmod 2$ and $l\ge m+4$.
Let the first two legs of the spider be $P_{2n+1}=P^{(2)}_{2n+1}$ and $P_{2n+2m+2}=P^{(1)}_{2n+1}Q_{2m+2}$.
We require $q+x=f^+(z_2)=f^+(u)=f(z_{l}z_{l+1})+(2n)+(q-2n-m-1)=y+q-m-1$ and $f^+(z_i)\in\{q-2, q-1, q\}$ for $3\le i\le l$. It is equivalent to $x+m+1=y$.

Let $r=m+1$. Then $N=l-1>r$ and $r\equiv N\pmod 2$.
By Corollary~\ref{cor-circ-perm} with this $r$, $N$ and $a$, we can label $z_2\cdots z_{l+1}$ satisfying the above requirement. Note that $2a+N=4n+2m+l-1$.
Hence $f$ is a local antimagic $4$-labeling for $Sp(2n,2n+2m+1,l)$.

\item Suppose $l\not\equiv m\pmod{2}$ and $l\ge m+4$. Let the first two legs of the spider be $P_{2n+1}=P^{(1)}_{2n+1}$ and $P_{2n+2m+2}=P^{(2)}_{2n+1}Q_{2m+2}$. Similarly, we require $x+m+2=y$.
Let $r=m+2$. Now $N> r$ and $N\equiv r\pmod 2$. By Corollary~\ref{cor-circ-perm}, we can label $z_2\cdots z_{l+1}$ satisfying the above requirement. Hence $f$ is a local antimagic $4$-labeling for $Sp(2n,2n+2m+1,l)$.

\item Suppose $l=m+3$. We use the same labeling as the case (a). We require that $x+m+1=y$. Let $r=m+1$. Then $N=m+2$, i.e., $r=N-1$. By Corollary~\ref{cor-circ-perm}, we have a local antimagic $4$-labeling for $Sp(2n,2n+2m+1,m+3)$.

\item Suppose $l=m+2$.
\begin{enumerate}[(i)]
\item Suppose $m\ge 1$. We use the labeling defined in the case (a), but swap the labels of the edges $v_{2m-1}v_{2m}$ and $v_{2m+1}v_{2m+2}$. Now, we require that $x+m=y$. Let $r=m$. Then $r=N-1$. By Corollary~\ref{cor-circ-perm}, we have a local antimagic $4$-labeling for $Sp(2n,2n+2m+1,m+2)$.

\item Suppose $m=0$. If $n\ge 2$, then $Sp(2n, 2n+1, 2)=Sp(2, 2+2(n-1)+1, 2n)$. Since $2n\ge n+2$, by the cases~(a), (b) or (c) we get the result. If $n=1$, then $Sp(2, 3, 2)=Sp(2, 2, 3)$. A required labeling is 7,2;\ 6,3;\ 5,4,1 with induced colors 9,7,6,5.
\end{enumerate}
\end{enumerate}\vskip-1cm
\end{proof}

\begin{theorem}\label{thm-oel} Suppose $n\ge 1$, $m\ge 0$ and $l\ge 2$. $\chi_{la}(Sp(2n+1,2n+2m,l))=4$.
\end{theorem}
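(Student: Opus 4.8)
The plan is to prove the two inequalities separately. For the lower bound, note that each leg has length at least $2$, so $G=Sp(2n+1,2n+2m,l)$ has exactly three pendant vertices and is not $K_2$; Theorem~\ref{thm-pendant} gives $\chi_{la}(G)\ge 4$, while Corollary~\ref{cor-spider} gives $\chi_{la}(G)\le 5$. Hence the whole problem reduces to producing a local antimagic labeling with exactly four colors.

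For the upper bound I would follow the template of Theorems~\ref{thm-2el} and~\ref{thm-eol}, writing $q=4n+2m+l+1$ for the size. Two of the three legs are labeled by the alternating ``high--low'' scheme of Lemma~\ref{lem-path-4a} (the pattern $q-1,1,q-3,3,\dots$ used in Theorem~\ref{thm-eol}), which forces every internal degree-$2$ vertex to receive a color in a fixed three-element set $\{q-2,q-1,q\}$ and makes the two corresponding pendants take distinct colors inside that set. The third leg is used as a ``control'' leg: its pendant edge is given the label $q$ (so that pendant also gets color $q$), and its remaining edges are filled by a sequence supplied by Corollary~\ref{cor-circ-perm}, so that consecutive edge-sums stay inside the same three-element set and the two extreme labels of the block differ by a prescribed amount $r$. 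Writing $\beta_1,\beta_2$ for the two core-incident edge labels on the alternately-labeled legs, the requirement that the core color coincide with the color of the vertex meeting the $q$-edge reads $y-x=q-\beta_1-\beta_2$, so the control leg must realize $r=|q-\beta_1-\beta_2|$; this is exactly what keeps the total color count at four.

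The essential new feature, compared with Theorem~\ref{thm-eol}, is that here only $l\ge 2$ is assumed, whereas a control leg of $N$ edges can realize (via Corollary~\ref{cor-circ-perm}) only differences $r$ up to about $N-1$ and of a fixed parity. If one insists on using the leg of length $l$ as the control leg, taking the odd leg and the even leg as the alternately-labeled ones yields $\beta_1=q-2n-1$, $\beta_2=2n+2m$, hence $r=2m-1$, which is too large when $l$ is small and $m$ is large. I would therefore choose the control leg \emph{adaptively}: when $l$ is large relative to $m$ the leg of length $l$ works exactly as in Theorem~\ref{thm-eol}, but when $l$ is small and $m$ is large I would instead use the long even leg $2n+2m$ as the control leg and place the leg of length $l$ among the alternately-labeled legs, so that the required difference $r$ always falls within the admissible range of Corollary~\ref{cor-circ-perm}. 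Equivalently, many instances can simply be reduced to Theorem~\ref{thm-eol} by reading off the three lengths in the form $Sp(2a,2a+2b+1,c)$, which already disposes of all $m=0$, of all even $l\le 2n$, and of a large family of odd $l$.

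The bookkeeping will be the main obstacle. After fixing which leg is the control leg, I must check that the labels used by the two alternating legs are precisely the complement in $[1,q]$ of $\{q\}$ together with the contiguous block handed to the control leg, and that adjacent degree-$2$ vertices never repeat a color at the junction where the legs meet. As in the proof of Theorem~\ref{thm-eol}, this forces a split into cases according to the parities of $l$ and $m$ (matching the needed parity of $r$ to the parity allowed by Corollary~\ref{cor-circ-perm}), plus a handful of smallest boundary instances --- for example $l=3$ with $n=1$ and $m\ge 1$ --- which I expect to settle by writing down an explicit four-color labeling, exactly as $Sp(2,2,3)$ was handled by hand at the end of Theorem~\ref{thm-eol}.
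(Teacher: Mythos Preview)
Your overall plan matches the paper's proof: the lower bound is exactly Theorem~\ref{thm-pendant}, and the upper bound is built from alternating ``high--low'' blocks on two legs together with a control leg whose interior is filled in by Corollary~\ref{cor-circ-perm}, with the boundary cases $m=0$, $l=2$ and even $l\le 2n$ reduced to Theorem~\ref{thm-eol}. One technical refinement you should be aware of: the paper does \emph{not} label the whole even leg of length $2n+2m$ by a single alternating run (which is what gives your value $r=2m-1$). It labels only a common prefix of length $2n+1$ alternately and then appends a separate block $Q_{2m}$; this brings the core-incident label on the even side down to $2n+m$, so that the required difference becomes $r\in\{m-1,m-2\}$ and Corollary~\ref{cor-circ-perm} already applies for all $l\ge m$, not only $l\ge 2m+1$.

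The one place where the paper diverges from your sketch is the residual case of odd $l$ with $l<m$. You propose swapping roles and making the long even leg $2n+2m$ the control leg. The paper does something different: it keeps the short odd leg $l=2h+1$ as the control leg, writes $m=2h+k$ with $k\ge 2$, and decomposes the long even leg as a concatenation $P^{(1)}_{2n+2}Q_{4h+1}S_{2k}$, where $S_{2k}$ is a new, explicitly designed middle block (two formulas according to the parity of $k$). This forces the needed difference down to $r=2h-1$ on a control block of length $N=2h$, hitting exactly the $r=N-1$ clause of Corollary~\ref{cor-circ-perm}. Your swap-the-control-leg idea may also be workable, but be warned that this subcase is not a pure permutation of roles: whichever leg you choose as control, you will need to insert an $S_{2k}$-type block somewhere to reconcile the parity of $r$ with the parity allowed by Corollary~\ref{cor-circ-perm} and to keep the label sets disjoint. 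This is the only part of the argument that goes beyond what you have outlined.
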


\begin{proof}When $m=0$. $Sp(2n+1, 2n+2m, l)=Sp(2n,2n+1, l)$. By Theorem~\ref{thm-eol} we get $\chi_{la}(Sp(2n+1, 2n+2m, l))=4$. Following we assume $m\ge 1$.

Suppose $l=2$. $Sp(2n+1, 2n+2m, 2)=Sp(2,2(n-1)+3, 2n+2m)$. Clearly, $2n+2m>(n-1)+3$. By Theorem~\ref{thm-eol} (with $n=1$) we have $\chi_{la}(Sp(2,2(n-1)+3, 2n+2m))=4$.

So we assume $l\ge 3$.
\begin{enumerate}[A.]
\item Suppose $l\ge m$.
Let $P^{(1)}_{2n+2}=x_{1}\cdots x_{2n}x_{2n+2}$, $P^{(2)}_{2n+2}= y_{1}\cdots y_{2n+2}$ and $Q_{2m}=v_1\cdots v_{2m}$. Also, let the last leg of the spider be $P_{l+1}=z_1\cdots z_{l+1}$. Now the number of edges is $q=4n+2m+l+1$.

Let $f$ be a required labeling. Firstly, let
$f(z_1z_2)=q$, $f(z_2z_3)=x$ and $f(z_{l}z_{l+1})=y$.

We label $P^{(j)}_{2n+2}$ and $Q_{2m}$, $1\le j\le 2$, by
\[\begin{array}{c|*{8}{|c}|}
i & 1 & 2 & 3 & 4 & \cdots & 2n-1 & 2n & 2n+1\\\hline\hline
x_ix_{i+1} & q-1 & 1& q-3 & 3 & \cdots & q-2n+1 & 2n-1 & q-2n-1\\\hline
y_iy_{i+1} & q-2 & 2 & q-4 & 4 & \cdots & q-2n & 2n & q-2n-2\\ \hline
\end{array}\]
\[\begin{array}{c|*{7}{|c}|}
i & 1 & 2 & 3 & \cdots &2m-3 & 2m-2 & 2m-1 \\\hline\hline
v_iv_{i+1} & 2n+1 & q-2n-3 & 2n+2 & \cdots &2n+m-1 & q-2n-m-1 & 2n+m \\ \hline
\end{array}\]

Similar to the proof of Theorem~\ref{thm-eol}, we shall label all $l-1$ unlabeled edges of the path $P_{l+1}=z_1\cdots z_{l+1}$ by integers in $[a+1, a+N]$, where $a=2n+m$ and $N=l-1$.

We assume $m\ge 3$ first.
\begin{enumerate}[(a)]
\item Suppose $l\equiv m\pmod 2$ and $l> m$.
Let the first two legs of the spider be $P_{2n+2}=P^{(1)}_{2n+2}$ and $P_{2n+2m+1}=P^{(2)}_{2n+2}Q_{2m}$.
Similar to the proof of Theorem~\ref{thm-eol} we require $q+x=f^+(z_2)=f^+(u)=f(z_{l}z_{l+1})+(q-2n-1)+(2n+m)=y+q+m-1$. It is equivalent to $x=y+m-1$.
Let $r=m-1$ and $N=l-1$.
By Corollary~\ref{cor-circ-perm}, we have the result.

\item Suppose $l\not\equiv m\pmod 2$. Let the first two legs of the spider be $P_{2n+2}=P^{(2)}_{2n+2}$ and $P_{2n+2m+1}=P^{(1)}_{2n+2}Q_{2m}$. Similarly, we require $x=y+m-2$. Let $r=m-2$ and $N=l-1$.
By Corollary~\ref{cor-circ-perm}, we have the result.
\item Suppose $l=m$. We use the labeling defined in (b). Now $r=m-2=N-1$. By Corollary~\ref{cor-circ-perm}, we have the result.
\end{enumerate}

When $m=1$. Use the labeling of case~(b). We require $x=y-1$. That is, $r=1$ and $N=l-1\ge 2$. By Corollary~\ref{cor-circ-perm}, $f$ is a local antimagic $4$-labeling for $Sp(2n+1,2n+2m,l)$.

When $m=2$. Use the labeling of case~(a). We require $x=y+1$. By Corollary~\ref{cor-circ-perm}, $f$ is a local antimagic $4$-labeling for $Sp(2n+1,2n+2m,l)$.

\item Suppose $l<m$.

\begin{enumerate}[(a)]
\item Suppose $l=2h\le 2n$. Then $Sp(2n+1,2n+2m,l)=Sp(2h, 2n+1, 2n+2m)=Sp(2h, 2h+2(n-h)+1, 2n+2m)$. Clearly, $2n+2m\ge (n-h)+3$. By Theorem~\ref{thm-eol}, $\chi_{la}(Sp(2n+1,2n+2m,l))=4$.

\item Suppose $l=2n+2s$, where $s\ge 1$. $Sp(2n+1,2n+2m,l)=Sp(2n+1, 2n+2m, 2n+2s)=Sp(2n+1, 2n+2s, 2n+2m)$. Clearly, $2n+2m\ge\max\{3, s\}$.  By the Case~A, $\chi_{la}(Sp(2n+1,2n+2m,l))=4$.
\item Suppose $l=2h+1$. Since $m\ge 4$, we let $m= 2h+k, k\ge 2$ to get $Sp(2n+1,2h+1,2n+4h+2k)$. Now, $q=4n+6h+2k+2$.
Let $P^{(1)}_{2n+2}=x_{1}\cdots x_{2n+1}x_{2n+2}$, $P^{(2)}_{2n+2}= y_{1}\cdots y_{2n+1}y_{2n+2}$, $Q_{4h+1}=z_1\cdots z_{4h+1}$, $S_{2k}=w_1\cdots w_{2k}$ and $R_{2h+2}=v_1\cdots v_{2h+1}v_{2h+2}$. Let $f$ be a required labeling. Firstly, let $f(v_1v_2)=q$, $f(v_2v_3)=x$ and $f(v_{2h+1}z_{2h+2})=y$.

We label $P^{(j)}_{2n+2}$ by integers in $[1,2n]\cup [q-2n-2,q-1]$:
\[\begin{array}{c|*{8}{|c}|}
i & 1 & 2 & 3 & 4 & \cdots & 2n-1 & 2n & 2n+1\\\hline\hline
x_ix_{i+1} & q-1 & 1& q-3 & 3 & \cdots & q-2n+1 & 2n-1 & q-2n-1\\\hline
y_iy_{i+1} & q-2 & 2 & q-4 & 4 & \cdots & q-2n & 2n & q-2n-2 \\ \hline
\end{array}\]

We label $Q_{4h+1}$ by in integers in $[2n+1,2n+2h]\cup [2n+4h+2k,2n+6h+2k-1=q-2n-3]$:
\[\T{9}\begin{array}{c|*{9}{|c}|}
i & 1 & 2 & 3 & 4 & \cdots &4h-3 & 4h-2 & 4h-1 & 4h\\\hline\hline
z_iz_{i+1} & 2n+1 & q-2n-3 & 2n+2 & q-2n-4 & \cdots & 2n+2h-1 & 2n+4h+2k+1 & 2n+2h & 2n+4h+2k  \\ \hline
\end{array}\]

We label $S_{2k}$ by integers in $[2n+2h+1,2n+2h+k]\cup[2n+4h+k+1,2n+4h+2k-1]$. If $k$ is even, we have
\[\T{6}\begin{array}{c|*{7}{|c}||c||}
i & 1 & 2 & 3 & 4 & \cdots & k-2 & k-1 & k \\\hline\hline
w_iw_{i+1} & 2n+2h+2 & 2n+4h+2k-2  & 2n+2h+4 & 2n+4h+2k-4 & \cdots & 2n+4h+k+2 & 2n+2h+k & 2n+4h+k+1 \\ \hline
\end{array}\]
\[\T{6}\begin{array}{c||c|*{7}{|c}|}
i & k & k+1 & k+2 & \cdots & 2k-4 & 2k-3 & 2k-2 & 2k-1\\\hline\hline
w_iw_{i+1} & 2n+4h+k+1 & 2n+2h+k-1 & 2n+4h+k+3 &\cdots & 2n+4h+2k-3  & 2n+2h+3 & 2n+4h+2k-1 & 2n+2h+1  \\ \hline
\end{array}\]

If $k$ is odd, we have
\[\T{6}\begin{array}{c|*{8}{|c}|}
i & 1 & 2 & 3 & 4 & \cdots & k-2 & k-1 & k \\\hline\hline
w_iw_{i+1} & 2n+2h+2 & 2n+4h+2k-2  & 2n+2h+4 & 2n+4h+2k-4 & \cdots & 2n+2h+k-1 & 2n+4h+k+1 & 2n+2h+k \\ \hline
\end{array}\]
\[\T{6}\begin{array}{c|*{8}{|c}|}
i & k & k+1 & k+2 & \cdots & 2k-4 & 2k-3 & 2k-2 & 2k-1\\\hline\hline
w_iw_{i+1} & 2n+2h+k & 2n+4h+k+2 & 2n+2h+k-2 &\cdots & 2n+4h+2k-3  & 2n+2h+3 & 2n+4h+2k-1 & 2n+2h+1  \\ \hline
\end{array}\]

We now have $P_{2n+4h+2k+1} = P^{(1)}_{2n+2}Q_{4h+1}S_{2k}$. Moreover, $P_{2n+2} = P^{(2)}_{2n+2}$. We shall now label the remaining $2h$ edge of $R_{2h+2}$ using integers in $[2n+2h+k+1,2n+4h+k]$.
Similar to the proof of the case~A, we require $x=y+2h-1$. That is, $r=2h-1$ and $N=2h$. By Corollary~\ref{cor-circ-perm}, we have a local antimagic $4$-labeling for $Sp(2n+1,2h+1,2n+4h+2k)$.
\end{enumerate}
\end{enumerate}
\end{proof}

%
%\begin{corollary}\label{cor-3el} $\chi_{la}(Sp(3,2m+2,l))=4$ for $m\ge 0$ and $l\ge 2$.
%\end{corollary}
%
%
%
%
%\begin{corollary}\label{cor-2el} $\chi_{la}(Sp(2,2+2m,2+2m+k))=4$ for $m\ge 0$ and $k\ge 0$.
%\end{corollary}
%
%\begin{proof}
%When $k=0$. This is a particular case of Corollary~\ref{cor-2even}.
%
%When $k=1$. We set $n=1$ and $l=2m+2$ in Theorem~\ref{thm-eol} we have
%$\chi_{la}(Sp(2,2+2m+1,2+2m))=4$.
%
%When $k\ge 2$. This is Theorem~\ref{thm-2el}.
%\end{proof}

\begin{theorem}\label{thm-eolsmall}
For $n\ge 1$ and $2\le l\le m+1$, $\chi_{la}(Sp(2n,2n+2m+1,l))=4$.
\end{theorem}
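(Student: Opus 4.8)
Since a spider is determined by the multiset of its leg lengths, throughout I am free to reorder the legs $2n$, $2n+2m+1$, $l$ and re-parametrize, and the lower bound $\chi_{la}\ge 4$ is immediate from Theorem~\ref{thm-pendant} (the spider has three pendant vertices). So the whole task is to exhibit a local antimagic $4$-labeling, and the plan is to reduce as many cases as possible to Theorems~\ref{thm-eol}, \ref{thm-oel} and \ref{thm-2el}, leaving one genuinely new construction.

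First I would dispatch the odd values of $l$ (so $l\ge 3$). If $l\ge 2n+1$, read the spider as $Sp(2n,\,l,\,2n+2m+1)$: this fits Theorem~\ref{thm-eol} with even leg $2n$, odd leg $l=2n+2\beta+1$ (where $\beta=(l-2n-1)/2\ge 0$), and third leg $2n+2m+1$, the required inequality $2n+2m+1\ge \beta+2$ being trivially true. If $3\le l\le 2n-1$, read it as $Sp(l,\,2n,\,2n+2m+1)$, which fits Theorem~\ref{thm-oel} with odd leg $l=2\alpha+1$, even leg $2n=2\alpha+2\beta$ (here $2\beta=2n-l+1\ge 0$), and a third leg $\ge 2$. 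For even $l$ I would peel off two sub-cases reducing to Theorem~\ref{thm-2el}: when $l=2$, $Sp(2,\,2n,\,2n+2m+1)=Sp(2,\,2+2(n-1),\,2+2(n-1)+(2m+1))$; and when $n=1$, $Sp(2,\,l,\,2m+3)$ with ``gap'' $2m+3-l\ge 2$ since $l\le m+1$. In both the final parameter is $\ge 2$, as Theorem~\ref{thm-2el} demands.

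This leaves the core difficulty: $n\ge 2$ and $l$ even with $l\ge 4$. Here no reordering works, because the only odd leg $2n+2m+1$ is far longer than either even leg, so one cannot present it as the ``small odd--even gap'' required by Theorem~\ref{thm-eol} (the inequality $\gamma\ge\beta+2$ fails) or by Theorem~\ref{thm-oel} (an even leg $\ge$ odd leg $-1$ fails). I would therefore build a labeling by hand, aiming for the four consecutive colors $\{q-2,q-1,q,q+1\}$ used in Theorem~\ref{thm-2el}, where $q=4n+2m+l+1$. The skeleton is: give the three pendant edges the three largest labels $q,q-1,q-2$ (so the three pendant colors already lie in the window); label the two short even legs by the fundamental big/small alternating labeling, drawn from the extreme block of $[1,q]$, so that each of their degree-$2$ vertices is coloured within the window; and decompose the long odd leg $2n+2m+1$ as a short alternating ``head'' (carrying its large pendant label) followed by a flexible ``tail'' adjacent to the core, labeled from the middle block via Lemma~\ref{lem-path-4a} (equivalently Corollary~\ref{cor-circ-perm}). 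The three edges at the core are then chosen so their labels sum to a window value (e.g. $q$), so that the core itself is coloured inside $\{q-2,q-1,q,q+1\}$ rather than becoming a fifth colour.

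The main obstacle is making all of this close simultaneously. One must partition $[1,q]$ into the extreme block (short legs) and the middle block (the flexible tail) with the right parities so that every degree-$2$ sum falls among four consecutive integers; one must choose the three core-edge labels so that their sum hits the target window value while each still fits the alternating pattern of its own leg; and one must tune the single free parameter of the flexible tail (the base of the Lemma~\ref{lem-path-4a} block, or the closing difference $r$ of Corollary~\ref{cor-circ-perm}, whose admissible values depend on the parity of the tail length) so that the core lands on exactly the right value. As in Theorems~\ref{thm-eol} and \ref{thm-oel}, I expect this to force a short case analysis on the parity of $l$ and the relative sizes of $n$, $m$ and $l$, each case reducing to a direct check that the chosen $r$ lies in the admissible set of Lemma~\ref{lem-circ-perm} and that no two adjacent vertices receive equal labels.
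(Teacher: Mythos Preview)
Your reductions for odd $l$ (via Theorems~\ref{thm-eol} and~\ref{thm-oel}) and for the boundary cases $l=2$ and $n=1$ (via Theorem~\ref{thm-2el}) are correct and coincide with the paper's cases (a), (b) and the easy sub-cases of (c). The divergence, and the gap, is in the remaining case $n\ge 2$, $l=2h$ with $h\ge 2$.

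There your plan is only a sketch, and it differs from the paper's in a way that matters. You propose to put the Corollary~\ref{cor-circ-perm} block on the \emph{long odd} leg and to aim for the four consecutive colours $\{q-2,q-1,q,q+1\}$; this forces the three core-incident labels to sum to at most $q+1$, and you never check that this is simultaneously compatible with alternating labelings on two even legs of \emph{different} lengths $2n$ and $l$. The paper does the opposite: it labels the long odd leg explicitly as a concatenation $P^{(2)}_{2n+1}Q_{4h+4k}$ of two alternating blocks, lets the core colour be large ($q+x$ rather than inside the window), and puts the flexible Corollary~\ref{cor-circ-perm} block on the \emph{short} leg of length $l=2h$. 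With that choice the closing condition becomes $y=x+3$, i.e.\ the single fixed difference $r=3$, independent of $n,m,l$, and this lies in the admissible set of Corollary~\ref{cor-circ-perm} whenever $N=2h-1\ge 5$.

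That last inequality also shows why ``a short case analysis'' is optimistic: even the paper's uniform scheme fails for $h=2$ (since $r=3$ is not admissible when $N=3$), so a rearranged construction is needed for $l=4$ with $n\ge 3$, and a completely ad-hoc labeling is given for $Sp(4,4,4k+11)$, i.e.\ $n=h=2$. Your proposal does not anticipate these exceptional sub-cases, and since the main construction is left unexecuted, the hard part of the theorem is not actually proved.
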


\begin{proof}\hspace*{\fill}{}
\begin{enumerate}[(a)]
\item Suppose $l=2h+1$ with $h<n$. Then $Sp(2n,2n+2m+1,2h+1)=Sp(2h+1, 2n, 2n+2m+1)=Sp(2h+1, 2h+2(n-h), 2n+2m+1)$. Clearly, $2n+2m+1\ge \max\{n-h, 3\}$. By Theorem~\ref{thm-oel}, $\chi_{la}(Sp(2n+1,2n+2m,l))=4$.
\item Suppose $l=2h+1$ with $h\ge n$. Then $Sp(2n,2n+2m+1,2h+1)=Sp(2n,2h+1,2n+2m+1)=Sp(2n,2n+2(h-n)+1,2n+2m+1)$. Since $2h+1\le m+1$, $2n+2m+1\ge (h-n)+2$. By Theorem~\ref{thm-eol}, $\chi_{la}(Sp(2n+1,2n+2m,l))=4$.

\item Suppose $l=2h$, $h\ge 1$. Since $l\le m+1$, we let $m=2h-1+2k$, $k\ge 0$, to get $Sp(2n,2h,2n+4h+4k-1)$. Now, $q=4n+6h+4k-1$.

   Suppose $h\ge 3$. Similar to the proof of Theorem~\ref{thm-eol}, let $P^{(1)}_{2n+1}=x_{1}\cdots x_{2n}x_{2n+1}$, $P^{(2)}_{2n+1}= y_{1}\cdots y_{2n+1}$, $Q_{4h+4k}=v_1\cdots v_{2h+4k-1}v_{4h+4k}$ and $P_{2h+1}=z_1\cdots z_{2h+1}$. Let $f$ be a required labeling. Firstly, let $f(z_1z_2)=q$, $f(z_2z_3)=x$ and $f(z_{l}z_{l+1})=y$.

We label $P^{(j)}_{2n+1}$, $1\le j\le 2$, by
\[\begin{array}{c|*{7}{|c}|}
i & 1 & 2 & 3 & 4 & \cdots & 2n-1 & 2n\\\hline\hline
x_ix_{i+1} & q-1 & 1& q-3 & 3 & \cdots & q-2n+1 & 2n-1\\\hline
y_iy_{i+1} & q-2 & 2 & q-4 & 4 & \cdots & q-2n & 2n\\ \hline
\end{array}\]

We label $Q_{4h+4k}$ by
\[\T{7}\begin{array}{c|*{9}{|c}|}
i & 1 & 2 & 3 & 4 & \cdots & 2h+2k-3 & 2h+2k-2 & 2h+2k-1 & 2h+2k\\\hline\hline
v_iv_{i+1} & q-2n-1 & 2n+1 & q-2n-3 & 2n+3 & \cdots & 2n+4h+2k+2 & 2n+2h+2k-3 & 2n+4h+2k & 2n+2h+2k-1 \\ \hline
\end{array}\]

\[\T{7}\begin{array}{c|*{8}{|c}|}
i & 2h+2k+1 & 2h+2k+2 & 2h+2k+3 & 2h+2k+4 & \cdots & 4h+4k-3 & 4h+4k-2 & 4h+4k-1 \\\hline\hline
v_{i}v_{i+1} & 2n+4h+2k-1 & 2n+2h+2k-2 & 2n+4h+2k+1 & 2n+2h+2k-4 & \cdots & q-2n-4 & 2n+2 & q-2n-2 \\ \hline
\end{array}\]

Let $P_{2n+1} = P^{(1)}_{2n+1}$ and $P_{2n+4h+4k}=P^{(2)}_{2n+1}Q_{4h+4k}$. We require $q+x = y + (2n-1) + (q-2n-2) = y+q-3$ so that $x+3=y$. Similar to the proof of Theorem~\ref{thm-2el}, we require $r=3$, $a=2n+2h+2k-1$ and $N=2h-1\ge 5$. By Corollary~\ref{cor-circ-perm}, we have a local antimagic $4$-labeling for $Sp(2n,2h,2n+4h+4k-1)$.

Suppose $h=1$ or $n=1$. By Theorem~\ref{thm-2el}, $\chi_{la}(Sp(2n,2h,2n+4h+4k-1))=4$.

Suppose $h=2$ and $n\ge 3$. We get $Sp(4,2n,2n+4k+7)$. Let $P^{(1)}_{5}=x_{1}\cdots x_{5}$, $P^{(2)}_{5}= y_{1}\cdots y_{5}$, $Q_{2n+4k+4}=v_1\cdots v_{2n+4k+4}$ and $P_{2n+1}=z_1\cdots z_{2n+1}$. Label the edges according to the steps above, we can get a similar conclusion.

If $n = h = 2$, we have $Sp(4,4,4k+11)$, $k\ge 0$. Let $P^{(1)}_5=x_1x_2\cdots x_5$, $P^{(2)}_5=y_1y_2\cdots y_5$ and $P^{(3)}_5=z_1z_2\cdots z_{5}$. Also let $Q_{4k+8}=v_1v_2\cdots v_{4k+8}$. We label $P^{(j)}_5$, $j=1,2,3$ by
\[\begin{array}{c|*{4}{|c}|}
i & 1 & 2 & 3 & 4 \\\hline\hline
x_ix_{i+1} & 4k+18 & 1 & 4k+16 & 3 \\ \hline
y_iy_{i+1} & 4k+17 & 2 & 4k+15 & 4  \\ \hline
z_iz_{i+1} & 4k+19 & 2k+8 & 2k+9 & 2k+10  \\ \hline
\end{array}\]

Label $Q_{4k+8}$ using integers in $[5,2k+7]\cup [2k+11,4k+14]$ by
\[\T{10}\begin{array}{c|*{9}{|c}|}
i & 1 & 2 & 3 & 4 & \cdots & 2k+1 & 2k+2 & 2k+3 & 2k+4 \\\hline\hline
v_iv_{i+1} & 4k+14 & 5 & 4k+12 & 7 & \cdots & 2k+14 & 2k+5 & 2k+12 & 2k+7 \\ \hline
\end{array}\]

\[\T{10}\begin{array}{c|*{8}{|c}|}
i & 2k+5 & 2k+6 & 2k+7 & 2k+8 & \cdots & 4k+5 & 4k+6 & 4k+7 \\\hline\hline
v_{i}v_{i+1} & 2k+11 & 2k+6 & 2k+13 & 2k+4 & \cdots & 4k+11 & 6 & 4k+13 \\ \hline
\end{array}\]

Let $P_{4k+12}=P^{(1)}_5Q_{4k+8}$ and the two paths of length 5 be $P^{(j)}_5, j=2,3$. It is easy to check that $Sp(4,4,4k+11)$ admits a local antimagic 4-labeling with vertex labels in $\{4k+17,4k+18,4k+19,6k+27\}$.
\end{enumerate}
\end{proof}

Combining Theorems~\ref{thm-eol}, \ref{thm-oel} and \ref{thm-eolsmall}, we have
\begin{theorem}\label{thm-2parity}
For $a,b,c\ge 2$ with even $a$ and odd $b$, $\chi_{la}(Sp(a,b,c))=4$.
\end{theorem}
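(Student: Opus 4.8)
The plan is to reduce the general statement to the three special families already handled in Theorems~\ref{thm-eol}, \ref{thm-oel} and \ref{thm-eolsmall}, exploiting the fact that a spider is determined only by the multiset of its leg lengths, so $Sp(a,b,c)$ may be rewritten with its legs in any order. Given $a$ even, $b$ odd and $c\ge 2$ arbitrary, I would first record that since $d=3$ and every leg has length at least $2$, Corollary~\ref{cor-spider} already gives $\chi_{la}(Sp(a,b,c))\ge 4$; hence it suffices to exhibit a local antimagic $4$-labeling in each case, which is precisely what the cited constructions provide.

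The core observation is that an even integer and an odd integer are never equal, so exactly one of $a<b$ or $a>b$ holds, and I would split on this comparison. If $a<b$, write $a=2n$ with $n=a/2\ge 1$ and $b=2n+2m+1$ with $m=(b-a-1)/2\ge 0$ (a nonnegative integer because $b-a$ is a positive odd number), and place the third leg $c$ in the role of $l$. Then $Sp(a,b,c)=Sp(2n,2n+2m+1,c)$ with $c\ge 2$, where Theorem~\ref{thm-eol} covers $c\ge m+2$ and Theorem~\ref{thm-eolsmall} covers $2\le c\le m+1$; together they dispose of every $c\ge 2$.

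If instead $a>b$, write $b=2n+1$ with $n=(b-1)/2\ge 1$ (valid since $b\ge 3$) and $a=2n+2m$ with $m=(a-b+1)/2\ge 1$ (a positive integer because $a-b$ is a positive odd number), again placing $c$ as $l$. Then $Sp(a,b,c)=Sp(2n+1,2n+2m,c)$ with $c\ge 2$, and Theorem~\ref{thm-oel} directly yields $\chi_{la}=4$ for all $c\ge 2$. Since the two cases are exhaustive, the conclusion follows in every instance.

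I do not expect any genuine obstacle: the whole argument is a bookkeeping exercise confirming that the parameter ranges $n\ge 1$, $m\ge 0$ (respectively $m\ge 1$) are satisfied and that the three cited theorems jointly tile the full range $c\ge 2$ of the third-leg length. The only point requiring care is to verify that the parity and size constraints built into each parametrization are consistent with $a$ even, $b$ odd, and the strict inequality of the current case; the mild overlap at $b=a+1$, handled both by Theorem~\ref{thm-eol} and by the $m=0$ instance of Theorem~\ref{thm-oel}, is harmless.
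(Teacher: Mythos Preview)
Your proposal is correct and follows exactly the paper's approach: the paper's entire proof is the single sentence ``Combining Theorems~\ref{thm-eol}, \ref{thm-oel} and \ref{thm-eolsmall}, we have\ldots'', and you have simply spelled out the parameter bookkeeping showing that every triple $(a,b,c)$ with $a$ even, $b$ odd falls into one of those three families. The lower-bound remark via Corollary~\ref{cor-spider} is redundant (each cited theorem already asserts $\chi_{la}=4$), but harmless.
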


\section{Spider with three even legs}

Now we are going to consider the case for three even legs.

\begin{theorem}\label{thm-3even} For $h\ge m\ge n\ge 1$, $\chi_{la}(Sp(2n,2m,2h))=4$. \end{theorem}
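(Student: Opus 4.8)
The plan is to produce an explicit local antimagic $4$-labeling, since the lower bound $\chi_{la}(Sp(2n,2m,2h))\ge 4$ is already given by Corollary~\ref{cor-spider}. First I would dispose of the base case $n=1$ by reduction to results already in hand: if $h>m$ then $Sp(2,2m,2h)=Sp(2,2+2(m-1),2+2(m-1)+k)$ with $k=2(h-m)\ge 2$, so $\chi_{la}=4$ by Theorem~\ref{thm-2el}, while if $h=m$ then $Sp(2,2m,2m)=Sp(2m,2,2m)$ has $\chi_{la}=4$ by Corollary~\ref{cor-2even}. Thus I may assume $h\ge m\ge n\ge 2$, and a handful of further special configurations (two equal legs, or the longest leg equal to $2a$ or $2a+b$ for the two shorter lengths $a,b$) are likewise covered directly by Corollary~\ref{cor-2even}. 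It then remains to build a labeling for the generic case.

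For the construction I would target the colour set $\{q-2,q-1,q,c_u\}$, where $q=2n+2m+2h$ and $c_u=f^+(u)>q$ is the core colour; since the three pendant vertices automatically receive three distinct colours (their unique incident edge-labels), the task is to force every internal degree-$2$ vertex, together with the three leaves, into the three values $q-2,q-1,q$, reserving one extra value for the core. Following the template of Theorems~\ref{thm-eol}--\ref{thm-eolsmall}, I would label two of the legs with the alternating high/low pattern $q-1,1,q-3,3,\dots$ and $q-2,2,q-4,4,\dots$, which makes their leaves take colours $q-1,q-2$ and pins all their interior vertices to $\{q-2,q\}$; the remaining long leg, possibly combined with a short path-piece as in the $PQ$ construction, would then be labelled by a structured block of the surviving labels via Lemma~\ref{lem-path-4a} and Corollary~\ref{cor-circ-perm}, whose consecutive-sum guarantee places its interior vertices inside $\{q-2,q-1,q\}$ and whose prescribed end-difference $r$ ties the near-core vertex to $c_u$.

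The principal obstacle, and the reason three even legs needs separate treatment, is a parity clash. If one reserves the maximum label $q$ for the long leg's pendant edge, the remaining block on that leg has odd length $2h-1$, so its consecutive sums cannot be anchored in the even window $\{q-2,q-1,q\}$; one would be pushed into $\{q-1,q,q+1\}$, and then only two of the three leaves could be coloured admissibly, since $q+1$ is not a label. I would resolve this exactly as in the $Sp(4,4,4k+11)$ sub-case of Theorem~\ref{thm-eolsmall}: place $q$ on a \emph{short} leg's pendant edge, give that short leg middle-range labels so that its second vertex absorbs the core colour $c_u$, and let the long leg start with the next-largest available label; this restores the correct parity for the block. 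The argument then splits into cases according to the size and parity of $h$ relative to $n$ and $m$ (mirroring the $l\equiv m$, $l\not\equiv m$, $l=m$, $l<m$ subdivisions used earlier), with the value of $r$ in Corollary~\ref{cor-circ-perm} chosen in each case to make the three edges at the core sum to $c_u$.

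Finally I would verify that $f$ is a bijection onto $[1,q]$, that adjacent vertices differ (the only delicate adjacencies being the leaf carrying $q$ next to its neighbour coloured $c_u$, and the two vertices flanking the core), and that exactly the four colours $q-2,q-1,q,c_u$ occur. I expect the real work to be the parity and block bookkeeping across all residue cases, together with isolating a small number of exceptional small tuples that must be labelled by hand, rather than any single hard idea.
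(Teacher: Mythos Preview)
Your handling of the base cases is fine and agrees with the paper: $n=1$ goes through Theorem~\ref{thm-2el} (and Corollary~\ref{cor-2even} for $h=m$), and the paper likewise disposes of $h=m$ and $m=n$ via Corollary~\ref{cor-evenlegs} before assuming $h>m>n>1$.

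For the generic case, however, the paper takes a quite different and simpler route than the one you outline. You aim for the colour set $\{q-2,q-1,q,c_u\}$ with the core carrying a new large value $c_u$, which forces you through Corollary~\ref{cor-circ-perm} and into the parity obstruction you describe (odd block length $2h-1$ versus the even target window). The paper avoids this entirely by \emph{not} giving the core a new colour: it labels the three pendant edges $q-1,\,q-2,\,q$ (with $q$ on the \emph{longest} leg, contrary to your plan) and distributes the remaining labels symmetrically around $r=q/2$, so that the core receives $f^+(u)=2r-1=q-1$, coinciding with the pendant colour on the shortest leg. The fourth colour, $q+r-1=\tfrac{3q}{2}-1$, is carried by the two degree-$2$ vertices $x_2$ and $z_2$ adjacent to the pendants labeled $q-1$ and $q$. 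No block of odd length is ever fed to Corollary~\ref{cor-circ-perm}, so your parity obstruction is an artefact of your chosen mechanism, not of the problem itself. The paper's construction is uniform for $h>m>n>1$ except for the single family $Sp(2n,2n+2,2n+4)$, which needs a tweaked labeling --- a special case your sketch does not foresee. Your plan may be completable, but it trades one clean explicit formula (plus one exceptional family) for a multi-way parity case-split whose details you have not yet supplied; the key idea you are missing is that the core can share a colour with a pendant.
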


\begin{proof} By Corollary~\ref{cor-evenlegs} and Theorem~\ref{thm-2el}, we may assume $h> m> n>1$. Let $f: E(Sp(2n,2m,2h))\to [1,2n+2m+2h]$ be a bijection. Note that $q=2n+2m+2h$. Let $r=q/2=n+m+h$. Let $P_{2n+1} = x_1x_2\cdots x_{2n+1}$, $P_{2m+1} = y_1y_2\cdots y_{2m+1}$, $P_{2h+1} = z_1z_2\cdots z_{2h+1}$. We first consider $(m,h)\ne (n+1,n+2)$. Let $f(x_1x_2)=q-1$, $f(y_1y_2)=q-2$ and $f(z_1z_2)=q$.

Label the remaining edges of $P_{2n+1}$ as:
\[\begin{array}{c|*{9}{|c}|}
i & 2 & 3 & 4 & 5 &  \cdots & 2n-3 & 2n-2 & 2n-1 & 2n \\\hline\hline
x_ix_{i+1}  & r & r-2 & r+2 & r-4 &  \cdots  & r-2n+4 & r+2n-4 & r-2n+2 & r+2n-2 \\\hline
\end{array}\]

For $P_{2h+1}$, we label the subpath $z_2z_3\cdots z_{2n+2}$ as:
\[\begin{array}{c|*{10}{|c}|}
k & 2 & 3 & 4 & 5 & \cdots & 2n-2 & 2n-1 & 2n & 2n+1  \\\hline\hline
z_kz_{k+1} & r-1 & r+1 & r-3 & r+3 & \cdots & r-2n+3 & r+2n-3 & r-2n+1 & r+2n-1 \\\hline
\end{array}\]

For odd $m$, label the remaining edges of $P_{2m+1}$ as:
\[\T{9}\begin{array}{c|*{10}{|c}|}
j & 2 & 3 & 4 & 5  & \cdots & m-3& m-2 & m-1 & m & m+1 \\\hline\hline
y_iy_{j+1} & 1 & q-3 & 3 & q-5  & \cdots &m-4 & q-m+2 & m-2 & q-m & m \\\hline
y_{m+j}y_{m+1+j}  & q-m-1 & m-1 & q-m+1 & m-3 &\cdots & q-6 & 4 & q-4 & 2 &\\\hline
\end{array}\]
For even $m$, label the remaining edges of $P_{2m+1}$ as:
\[\T{9}\begin{array}{c|*{10}{|c}|}
j  & 2 & 3 & 4 & 5 & \cdots & m-3 & m-2 & m-1 & m & m+1 \\\hline\hline
y_iy_{j+1}  & 1 & q-3 & 3 & q-5 & \cdots & q-m+3 &  m-3 & q-m+1 & m-1 & q-m-1\\\hline
y_{m+j}y_{m+1+j} & m & q-m & m-2  & q-m+2 &\cdots & q-6 & 4 & q-4 & 2 &\\\hline
\end{array}\]

Up to now, $[1,m]\cup [r-2n+1, r+2n-1]\cup [q-m-1, q]$ are used. Note that, $r-2n+1\ge m+2$ and $r+2n-1\le 2n+m+2h-3=q-m-3$. So no label is used twice. Now we shall assign the labels in $[m+1, r-2n]\cup [r+2n, q-m-2]$ to the unlabeled edges of $P_{2h+1}$.

For $n\equiv h\pmod{2}$, we label the remaining edges of $P_{2h+1}$ as:
\[\begin{array}{c|*{11}{|c}|}
k & 2n+2 & 2n+3 & 2n+4 & 2n+5 & \cdots & n+h-2 & n+h-1  & n+h & n+h+1  \\\hline\hline
z_kz_{k+1} & r-2n & r+2n & r-2n-2 & r+2n+2 &\cdots & m+4 & q-m-4 & m+2 & q-m-2\\\hline
\end{array}\]
\[\T{9}\begin{array}{c|*{9}{|c}|}
k & 3 & 4 & 5 & 6 & \cdots & h-n-2 & h-n-1  & h-n & h-n+1  \\\hline\hline
z_{n+h-1+k}z_{n+h+k} & m+1 & q-m-3 & m+3 & q-m-5 & \cdots & r+2n+3 &r-2n-3 & r+2n+1 & r-2n-1 \\\hline
\end{array}\]
For $n\not\equiv h\pmod{2}$, we label the remaining edges of $P_{2h+1}$ as:
\[\begin{array}{c|*{11}{|c}|}
k & 2n+2 & 2n+3 &  2n+4 & 2n+5 & \cdots & n+h-2 & n+h-1  & n+h & n+h+1  \\\hline\hline
z_kz_{k+1} & r-2n & r+2n & r-2n-2 & r+2n+4 &\cdots & q-m-5 & m+3 & q-m-3 & m+1\\\hline
\end{array}\]
\[\T{9}\begin{array}{c|*{9}{|c}|}
k & 3 & 4 & 5 & 6 & \cdots & h-n-2 & h-n-1  & h-n & h-n+1  \\\hline\hline
z_{n+h-1+k}z_{n+h+k} & q-m-2 & m+2 & q-m-4 & m+4 & \cdots & r+2n+3 &r-2n-3 & r+2n+1 & r-2n-1 \\\hline
\end{array}\]
Let $u = x_{2n+1} = y_{2m+1} = z_{2h+1}$. When $(2n,2m,2h)\not=(2n,2n+2,2n+4)$. It is routine to check that $f^+(u) = 2r-1=q-1$, $f^+(x_{2n})=f^+(z_{2h})=q$, $f^+(y_{2m})=q-2$, $f^+(x_2)=f^+(z_2)=q+r-1$, and all other induced vertex labels are $q-2$, $q-1$ and $q$.
Thus, $f$ is a local antimagic 4-labeling.

\ms
Consider $Sp(2n, 2n+2, 2n+4)$, $n\ge 2$. Now $q=6n+6$. Let
$f(x_1x_2)=6n+5$, $f(y_1y_2)=6n+4$ and $f(z_1z_2)=6n+6$.
Label the remaining edges of $P_{2n+1}$ as:
\[\begin{array}{c|*{9}{|c}|}
i & 2 & 3 & 4 & 5 &  \cdots & 2n-3 & 2n-2 & 2n-1 & 2n \\\hline\hline
x_ix_{i+1}  & 3n+3 & 3n+1 & 3n+5 & 3n-1 &  \cdots  & n+7 & 5n-1 & n+5 & 5n+1 \\\hline
\end{array}\]
Label the remaining edges of $P_{2n+5}$ as:
\[\begin{array}{c|*{9}{|c}||c|c|}
k & 2 & 3 & 4 & 5 & \cdots & 2n-1 & 2n & 2n+1 & 2n+2 & 2n+3 & 2n+4  \\\hline\hline
z_kz_{k+1} & 3n+2 & 3n+4 & 3n & 3n+6 & \cdots & 5n & n+4 & 5n+2 & n+2 & 5n+3 & n+3 \\\hline
\end{array}\]
Now we used labels $[n+2, 5n+3]\cup\{6n+4, 6n+5, 6n+6\}$.

For even $n$, label the remaining edges of $P_{2n+3}$ as:
\[\begin{array}{c|*{9}{|c}|}
j  & 2 & 3 & 4 & 5 & \cdots  & n-1 & n & n+1 & n+2\\\hline\hline
y_iy_{j+1}  & 2 & 6n+2 & 4 & 6n & \cdots & 5n+6 &  n & 5n+4 &\\\hline
y_{n+j}y_{n+1+j} & n+1 & 5n+5 & n-1 & 5n+7 & \cdots & 6n+1 & 3 & 6n+3 & 1 \\\hline
\end{array}\]

For odd $n$, label the remaining edges of $P_{2n+3}$ as:
\[\begin{array}{c|*{9}{|c}|}
j  & 2 & 3 & 4 & 5 & \cdots  & n-1 & n & n+1 & n+2\\\hline\hline
y_iy_{j+1}  & 2 & 6n+2 & 4 & 6n & \cdots & n-1 &  5n+5 & n+1 &\\\hline
y_{n+j}y_{n+1+j} & 5n+4 & n & 5n+6 & n-2 & \cdots & 6n+1 & 3 & 6n+3 & 1 \\\hline
\end{array}\]

It is routine to check that $f^+(u) = 6n+5$, $f^+(x_{2n})=f^+(z_{2n+4})=6n+6$, $f^+(y_{2n+2})=6n+4$, $f^+(x_2)=f^+(z_2)=9n+8$, and all other induced vertex labels are $6n+4$, $6n+5$ and $6n+6$. Thus, $f$ is a local antimagic 4-labeling. The theorem holds.
\end{proof}

\begin{example} Following are labelings for some spiders of 3 even legs according to the proof above. The numbers listed in the parenthesis are the induced vertex colors.\\[-2mm]

\nt\begin{minipage}[b]{8.5cm}
$Sp(4,6,8)$:\\
$\begin{array}{*{8}{c}}
17 & 9 & 7 & 11\\
16 & 2 & 14 & \underline{3} & \underline{15} & \underline{1}\\
18 & 8 & 10 & 6 & 12 & 4 & \underline{13} & \underline{5}
\end{array}$\\
(26, 18, 17, 16)
\end{minipage}
\begin{minipage}[b]{8.5cm}
$Sp(4,6,12)$:\\
$\begin{array}{*{12}{c}}
21 & 11 & 9 & 13\\
20 & 1 & 19 & 3 & \underline{18} & \underline{2}\\
22 & 10 & 12 & 8 & 14 & \underline{7} & \underline{15} & \underline{5} & \underline{17} & 4 & 16 & 6
\end{array}$\\
(32, 22, 20, 19)
\end{minipage}\\

\nt\begin{minipage}[b]{8.5cm}
$Sp(4,8,10)$:\\
$\begin{array}{*{10}{c}}
21 & 11 & 9 & 13\\
20 & 1 & 19 & 3 & 17 & \underline{4} & \underline{18} & \underline{2}\\
22 & 10 & 12 & 8 & 14 & \underline{7} & \underline{15} & \underline{5} & 16 & 6
\end{array}$\\
(32, 22, 21, 20)
\end{minipage}
\begin{minipage}[b]{8.5cm}
$Sp(6,8,10)$:\\
$\begin{array}{*{10}{c}}
23 & 12 & 10 & 14 & 8 & 16\\
22 & 2 & 20 & 4 & \underline{19} & \underline{3} & \underline{21} & \underline{1}\\
24 & 11 & 13 & 9 & 15 & 7 & 17 & 5 & \underline{18} & \underline{6}
\end{array}$\\
(35, 24, 23, 22)
\end{minipage}
\rsq
\end{example}

\section{Spiders with three odd legs}

\begin{theorem}\label{thm-sp(2n+1,2m+1,n+m+3k+1)} For $m,n\ge 1, k\ge 1$, $m+n+k$ even and $3k\le n+m$, $\chi_{la}(Sp(2n+1,2m+1,n+m+3k+1))=4$.  \end{theorem}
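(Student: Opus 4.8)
The plan is to prove the equality by the two standard inequalities used throughout this section. The lower bound $\chi_{la}(Sp(2n+1,2m+1,n+m+3k+1))\ge 4$ is immediate from Theorem~\ref{thm-pendant}, since a $3$-leg spider has exactly $d=3$ pendant vertices. So the whole content is the upper bound $\chi_{la}\le 4$, i.e.\ the explicit construction of a local antimagic labeling $f$ with $c(f)=4$. First I would record the basic data of $G=Sp(2n+1,2m+1,n+m+3k+1)$: its size is $q=(2n+1)+(2m+1)+(n+m+3k+1)=3(n+m+k+1)$, and since $m+n+k$ is even we have $n+m+3k+1\equiv 1\pmod 2$, so all three legs are odd — consistent with the section heading and telling me which of the path lemmas apply.

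The construction strategy I would follow mirrors the proofs of Theorems~\ref{thm-2el}, \ref{thm-eol} and \ref{thm-3even}. I would regard the two short legs $P_{2n+1}$ and $P_{2m+1}$ as joined at the core into a single long path, and label their edges in the alternating ``large--small'' pattern $q-1,1,q-3,3,\dots$ (the fundamental labeling / the scheme of Lemma~\ref{lem-path-4a}), which forces each internal degree-$2$ vertex on these legs into a three-element set of induced values clustered around $q$ or $q+1$ and pins each short pendant to a controlled value. The remaining middle edges of the long leg $P_{n+m+3k+1}$ are then labeled using Corollary~\ref{cor-circ-perm}: I would choose a shift $a$ and a difference parameter $r$ so that (a) the edge of the long leg incident to the core receives the value that makes $f^+(u)$ equal to the single ``core color'', and (b) the long pendant lands on one of the already-used colors. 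Because consecutive terms of that circular permutation sum to a value in $\{2a+N,2a+N+1,2a+N+2\}$, every internal vertex of the long leg automatically joins the same three-element cluster, so the total color count collapses to $4$: the core color, the cluster value(s), and the short pendant values, arranged so that adjacent vertices always differ.

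The step I expect to be the main obstacle is the bookkeeping that makes the two hypotheses bite. The available interval $[a+1,a+N]$ for the long leg has length $N=n+m+3k-1$ after the outer edges are fixed, and applying Corollary~\ref{cor-circ-perm} requires $N>r$ (or $N=r+1$) together with $N\equiv r\pmod 2$; it is precisely the parity condition $m+n+k$ even that delivers the correct parity of $r$, and the inequality $3k\le n+m$ that guarantees $N$ is large enough (equivalently that the shift $a$ does not collide with the labels already spent on the two short legs). I would therefore expect the proof to split into a few parity subcases (according to the residues of $n,m$ and whether $r=N-1$ or $r<N-1$), plus separate handling of the small boundary instances where a leg is too short for the generic scheme, exactly as in Theorems~\ref{thm-eol} and \ref{thm-eolsmall}. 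Verifying that across all these cases every label in $[1,q]$ is used exactly once and that no accidental fifth color or adjacent-equality appears is the routine but delicate part; the conceptual work is only in selecting $a$ and $r$.
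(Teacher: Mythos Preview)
Your plan diverges from the paper's proof, and as written it carries a genuine gap. The Section~3 machinery you want to reuse---label the two ``short'' legs with the alternating pattern $q-1,1,q-3,3,\dots$ and then apply Corollary~\ref{cor-circ-perm} to an interval of leftover labels on the long leg---works in Theorems~\ref{thm-eol} and~\ref{thm-oel} precisely because one of the two parallel legs there is even. When both short legs are odd (as here, $2n+1$ and $2m+1$), that alternating scheme consumes the odd integers in $[1,2n-1]$, the even integers in $[2,2m]$, and two mirrored blocks near $q$; the leftover labels are the odds in one middle range together with the evens in another, which is \emph{not} an interval unless $n=m$. Corollary~\ref{cor-circ-perm} therefore does not apply, and no choice of shift $a$ and difference $r$ rescues it. The hypotheses $m+n+k$ even and $3k\le n+m$ do not fix this; they are not the parity/size conditions of Corollary~\ref{cor-circ-perm} in disguise.

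The paper instead builds an explicit labeling from scratch, with no appeal to Corollary~\ref{cor-circ-perm}. After assuming without loss of generality $m>k$ (which $3k\le n+m$ makes possible), each leg is labeled in two or three hand-designed segments: the first leg begins $q,\ n+m+k,\ 2n+2m+2k+1$ and then alternates through $[m+k+1,n+m+k]\cup[2n+2m+2k+2,3n+2m+2k]$; the second and third legs each start with a $2k$-edge segment using the even (resp.\ odd) integers in $[1,2k]$ and $[3n+3m+k+1,q-1]$, after which they switch to step-$1$ alternations through carefully chosen middle blocks. The upshot is that the core and three specific degree-$2$ vertices ($x_2$, $x_4$, $z_{2k+2}$) all receive the single ``large'' colour $4n+4m+4k+3$, while every other vertex lands in $\{q-2,q-1,q\}$. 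The condition $3k\le n+m$ (together with $m>k$) is what guarantees all the segment lengths are nonnegative and the label blocks are disjoint; the parity condition $m+n+k$ even is what makes the third leg length $n+m+3k+1$ odd and the block sizes integral. So the role of the hypotheses is combinatorial bookkeeping for this particular table, not an $N$-versus-$r$ inequality for the circular permutation lemma.
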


\begin{proof} Since $m+n\ge 3k$, without loss of generality, we assume that $m>k$. Let $f$ be an edge labeling of $Sp(2n+1,2m+1,n+m+3k+1)$. Let $P_{2n+2}=x_{1}\cdots x_{2n+1}x_{2n+2}$, $P_{2m+2}= y_{1}\cdots y_{2m+1}y_{2m+2}$ and $P_{n+m+3k+2}=z_1\cdots z_{n+m+3k+1}z_{n+m+3k+2}$. Let $u=x_{2n+2}=y_{2m+2}=z_{n+m+3k+2}$. Now, $q=3n+3m+3k+3$. We label $P_{2n+2}$, $P_{2m+2}$ and $P_{n+m+3k+2}$ as:

{\T{8}
\noindent$\begin{array}{c||c|c|c|}
i & 1 & 2 & 3 \\\hline\hline
x_ix_{i+1} & 3n+3m+3k+3 & n+m+k & 2n+2m+2k+1 \\\hline
\end{array}$\\[1mm]
\noindent$\begin{array}{c||*{7}{c|}}
i & 4  & 5 & 6 & 7 &\cdots & 2n & 2n+1  \\\hline\hline
x_ix_{i+1} & 2n+2m+2k+2 & n+m+k-1 & 2n+2m+2k+3 & n+m+k-2& \cdots &  3n+2m+2k & m+k+1   \\\hline
\end{array}$}\\[1mm]
Used labels: $[m+k+1, n+m+k]\cup[2n+2m+2k+1, 3n+2m+2k]\cup\{3m+3n+3k+3\}$.

{\T{8}
\noindent$\begin{array}{c|*{8}{|c}|}
i & 1 & 2 & 3 & 4 & \cdots & 2k-1&  2k & 2k+1 \\\hline\hline
y_iy_{i+1} & 3n+3m+3k+1 & 2 & 3n+3m+3k-1 & 4  &\cdots & 3n+3m+k+3 & 2k & 3n+3m+k+1 \\\hline
\end{array}$\\[1mm]
\noindent$\begin{array}{c|*{7}{c|}}
i & 2k+2 &2k+3 & 2k+4 & 2k+5 & \cdots & 2m & 2m+1 \\\hline\hline
y_iy_{i+1}  & 2k+1 & 3n+3m+k & 2k+2 & 3n+3m+k-1 & \cdots & m+k	& 3n+2m+2k+1 \\\hline
\end{array}$}\\[1mm]
Used labels: $([2, 2k]\cap \mathbb E)\cup[2k+1, m+k]\cup [3n+2m+2k+1, 3n+3m+k]\cup([3n+3m+k+1, 3n+3m+3k+1]\cap \mathbb{O})$, where $\mathbb E$ and $\mathbb O$ are the sets of even and odd integers, respectively.

{\T{8}
\noindent$\begin{array}{c|*{8}{|c}|}
i & 1 & 2 & 3 & 4 & \cdots & 2k-1&  2k & 2k+1 \\\hline\hline
z_iz_{i+1} & 3n+3m+3k+2 & 1 & 3n+3m+3k & 3  &\cdots & 3n+3m+k+4 & 2k-1 & 3n+3m+k+2 \\\hline
\end{array}$\\[1mm]
\noindent$\begin{array}{c||*{7}{c|}}
i & 2k+2 &2k+3 & 2k+4 & 2k+5 & \cdots & n+m+2-k & n+m+3-k \\\hline\hline
z_iz_{i+1}  & n+m+3k+1 & 2n+2m & n+m+3k+3 & 2n+2m-2 & \cdots & 2n+2m+1	& n+m+3k \\\hline
\end{array}$\\[1mm]		
\noindent$\begin{array}{c||*{7}{c|}}
i & n+m+4-k & n+m+5-k &  n+m+4-k & n+m+5-k &\cdots & n+m+3k & n+m+3k+1 \\\hline\hline
z_iz_{i+1}  & 2n+2m+2& n+m+3k-1 &2n+2m+3&  n+m+3k-2 &\cdots & 2n+2m+2k& n+m+k+1	\\\hline
\end{array}$}\\[1mm]
Used labels: $([1, 2k-1]\cap \mathbb O)\cup [n+m+k+1, 2n+2m+2k]\cup([3n+3m+k+2, 3n+3m+3k+2]\cap \mathbb{E})$.			

Clearly $f$ is a bijection. Now $f^+(x_2) = f^+(x_4)= f^+(z_{2k+2})=f^+(u) = 4n+4m+4k+3$ and\\ $f^+(w)\in\{3n+3m+3k+3, 3n+3m+3k+2, 3n+3m+3k+1\}$ for other vertex $w$. Thus, $f$ is a required local antimagic 4-coloring.
\end{proof}

\begin{example} $Sp(17,15,25)$ ($n=8, m=7, k=3$):\\
{\T{10}
$\begin{array}{*{25}{c}}
57 & 18	& 37; & 38 & 17 & 39 & 16 & 40 & 15 & 41 & 14 & 42 & 13 & 43	& 12 & 44 &11 \\
55 & 2 & 53 & 4 & 51 & 6 & 49; & 7 & 48 & 8 & 47 & 9 & 46 & 10 & 45\\
56& 1 & 54 & 3& 52 & 5 & 50; & 25 & 30 & 27 & 28 & 29 & 26 & 31 & 24; & 32 & 23 & 33 & 22	& 34 & 21 & 35 & 20	& 36& 19
\end{array}$}\\
$(75,57,56,55)$
\end{example}

\begin{theorem}\label{thm-sp(2n+1,2m+1,n+m+1)} For $m\ge 2,n\ge 1$ and $m+n\ge 4$ even, $\chi_{la}(Sp(2n+1,2m+1,n+m+1))=4$.  \end{theorem}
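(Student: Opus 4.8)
The plan is to construct an explicit local antimagic labelling attaining $c(f)=4$; the matching lower bound is free. First I would record that, since $m+n$ is even, the third leg length $n+m+1$ is odd, so $Sp(2n+1,2m+1,n+m+1)$ is a spider with three \emph{odd} legs and size $q=(2n+1)+(2m+1)+(n+m+1)=3(n+m+1)$. It has exactly three pendant vertices, so Theorem~\ref{thm-pendant} gives $\chi_{la}\ge 4$, while Corollary~\ref{cor-spider} gives $\chi_{la}\le 5$; hence it suffices to exhibit a labelling with four induced colours. I would also note that this family is precisely the $k=0$ instance of $Sp(2n+1,2m+1,n+m+3k+1)$ treated in Theorem~\ref{thm-sp(2n+1,2m+1,n+m+3k+1)}, which required $k\ge 1$, so a fresh construction is unavoidable: setting $k=0$ in the tables there makes the third leg's middle block overflow its index range and repeat a label, so those tables do not specialise.

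The construction I would aim for keeps the four colours equal to $q-2,\,q-1,\,q$ together with one value exceeding $q$. I would place the three largest labels $q,q-1,q-2$ on the three pendant edges, so that the three leaves realise the colours $q-2,q-1,q$. The remaining labels $1,\dots,q-3$ are then threaded along the legs by the usual high--low interleaving (pairing a large label with a small one at each internal vertex) so that every degree-$2$ vertex sums into $\{q-2,q-1,q\}$, while the three edges meeting the core $u$ sum to a single value larger than $q$, giving the fourth colour. One structural observation drives the design: the neighbour of the leaf whose pendant edge carries $q$ is forced to have induced label $>q$, so it \emph{must} coincide with the core colour; as in the proof of Theorem~\ref{thm-sp(2n+1,2m+1,n+m+3k+1)}, a bounded number of near-leaf internal vertices will therefore also carry the large colour, which is harmless because they are pairwise non-adjacent and none is adjacent to $u$. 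For the stretch that the old tables cannot cover I would invoke Lemma~\ref{lem-path-4a} (or Corollary~\ref{cor-circ-perm}) to label a consecutive block of integers along a subpath so that all consecutive sums lie in the required three-colour window, and I would re-apportion the three label blocks across the legs so that together they still partition $[1,q]$.

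The hard part is exactly this re-apportionment of the labels for the third leg. In the $k\ge 1$ proof the third leg uses an interior block whose consecutive sums naturally sit near $q$; when $k=0$ the only block left for a leg of length $n+m+1$ forces its internal sums near $q-3$ rather than near $q$, which would introduce a spurious fifth colour. Resolving this --- choosing which large and small labels go to which leg so that all three legs keep their internal sums inside one common three-element window while the blocks still tile $[1,q]$ --- is the crux, and is where most of the bookkeeping lives. Once a consistent assignment is fixed, the verification is routine: the chosen blocks visibly partition $[1,q]$ (bijectivity); exactly four colour values occur; and adjacency holds because $f^+(u)>q$ separates the core from its neighbours, the repeated large-colour vertices are mutually non-adjacent, and the interleaving makes consecutive internal colours alternate within $\{q-2,q-1,q\}$. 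Finally I would dispose of the boundary and parity subcases --- the smallest admissible pairs (such as $n=1$ or $m=2$) and the split according to whether $n\equiv m \pmod 2$ --- by direct labellings, since the generic pattern needs the legs to be long enough to run.
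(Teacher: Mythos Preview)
Your high-level strategy matches the paper's: put $q,q-1,q-2$ on the three pendant edges, interleave high and low labels so internal vertices land in $\{q-2,q-1,q\}$, and let the core together with a few non-adjacent degree-$2$ vertices carry a single colour exceeding $q$. But you stop precisely at the point that constitutes the proof. You write that ``resolving this \dots\ is the crux, and is where most of the bookkeeping lives'' and that ``once a consistent assignment is fixed, the verification is routine'' --- yet you never fix one. A plan to construct a labelling is not a labelling; without explicit formulas (or at least a verifiable recipe) for every edge, there is nothing to check, and the theorem is unproved.

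The paper does exactly the missing work with one short, explicit table per leg, and the device you are groping for is concrete: on the $(2m+1)$-leg the first four edge labels are $q,\;n+m,\;2n+2m+1,\;2n+2m+2$, so that \emph{two consecutive large labels} sit at positions $3$ and $4$. This forces $f^+(y_2)=f^+(y_4)=4n+4m+3=f^+(u)$ while $f^+(y_3)=q-2$, and it is what rebalances the blocks so that all three legs can be tiled by $[1,q]$ with internal sums in $\{q-2,q-1,q\}$. Invoking Lemma~\ref{lem-path-4a} or Corollary~\ref{cor-circ-perm} as you suggest is not how the paper proceeds and would not obviously help here, since those tools produce sums near $2a+N$, and you yourself note the available block lands the sums near $q-3$; the fix is the ad hoc two-large-labels trick, not a black-box path lemma. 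Two smaller points: the parity split ``$n\equiv m\pmod 2$'' you list is vacuous, since $m+n$ even already forces $n\equiv m$; and there are in fact no separate boundary subcases in the paper's argument --- the single set of formulas covers all $m\ge 2$, $n\ge 1$ with $m+n$ even.
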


\begin{proof}Let $f$ be a bijective edge labeling of $Sp(2n+1,2m+1,n+m+1)$. Let $P_{2n+2}=x_{1}\cdots x_{2n+1}x_{2n+2}$, $P_{2m+2}= y_{1}\cdots y_{2m+1}y_{2m+2}$ and $P_{n+m+2}=z_1\cdots z_{n+m+1}z_{n+m+2}$. Let $u=x_{2n+2}=y_{2m+2}=z_{n+m+2}$. Now, $q=3n+3m+3$. We label $P_{2n+2}$, $P_{2m+2}$ and $P_{n+m+2}$ as:
{\T{8}
\begin{align*}&\begin{array}{c|*{8}{|c}|}
i & 1 & 2 & 3 & 4 & 5 & \cdots & 2n & 2n+1  \\\hline\hline
x_ix_{i+1} & 3n+3m+2 & 1 & 3n+3m & 2 & 3n+3m-1  & \cdots &  n & 2n+3m+1   \\\hline
\end{array}\\
&\begin{array}{c|*{3}{|c}"*{7}{c|}}
i & 1 & 2 & 3 & 4 & 5 & 6 & 7 & \cdots  & 2m & 2m+1 \\\hline\hline
y_iy_{i+1} & 3n+3m+3 & n+m & 2n+2m+1 & 2n+2m+2 & n+m-1 & 2n+2m+3 & n+m-2 & \cdots  & 2n+3m & n+1   \\\hline
\end{array}\\
&\begin{array}{c|*{8}{|c}|}
i & 1 & 2 & 3 & 4 & 5 & \cdots  & n+m & n+m+1   \\\hline\hline
z_iz_{i+1} & 3n+3m+1 & n+m+2 & 2n+2m-1 & n+m+4 & 2n+2m-3 & \cdots &  2n+2m & n+m+1  \\\hline
\end{array}\end{align*}}

Now $f^+(y_4) = f^+(u) = 4n+4m+3$ and $f^+(w)\in\{3n+3m+3, 3n+3m+2, 3n+3m+1\}$ for other vertex $w$. Thus, $f$ is a required local antimagic 4-coloring.
\end{proof}

\newpage
\begin{example} $Sp(9,17,13)$:\\
$\begin{array}{*{17}{c}}
38 & 1 & 36 & 2 & 35 & 3 & 34 & 4 & 33 \\
39 & 12 & 25 & 26 & 11 & 27 & 10 & 28 & 9 & 29 & 8 & 30 & 7 & 31 & 6 & 32 & 5 \\
37 & 14 &  23 & 16 & 21 & 18 & 19 & 20 &  17 & 22 & 15 &  24 &  13 \\
\end{array}$\\
$(37,38,39,51)$ \rsq
\end{example}

By substituting $m=n+2$ or $m=kn$, where $k\ge 1$ and $(k+1)n$ is even, in Theorem~\ref{thm-sp(2n+1,2m+1,n+m+1)}, we have
\begin{corollary}\label{cor-(2n+1,2n+3,2n+5)} For $n,k\ge 1$, $\chi_{la}(Sp(2n+1,2n+3,2n+5))=\chi_{la}(Sp(2n+1, 2kn+1,(k+1)n+1))=4$, where $(k+1)n$ is even.
\end{corollary}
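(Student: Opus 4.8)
The plan is to obtain both equalities as immediate specializations of Theorem~\ref{thm-sp(2n+1,2m+1,n+m+1)}, using the fact that a spider is determined up to isomorphism — and hence that $\chi_{la}$ is unchanged — by the multiset of its leg lengths, so legs may be freely reordered.

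First I would handle $Sp(2n+1,2n+3,2n+5)$ by setting $m=n+2$ in Theorem~\ref{thm-sp(2n+1,2m+1,n+m+1)}. With this choice $2m+1=2n+5$ and $n+m+1=2n+3$, so the theorem's graph $Sp(2n+1,2m+1,n+m+1)$ is $Sp(2n+1,2n+5,2n+3)$, which after reordering is exactly $Sp(2n+1,2n+3,2n+5)$. The hypotheses transfer at once: $m=n+2\ge 3\ge 2$, the sum $m+n=2n+2$ is even, and $m+n=2n+2\ge 4$ for every $n\ge 1$. Hence the theorem yields $\chi_{la}=4$ for this family with no exceptions.

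Next I would treat $Sp(2n+1,2kn+1,(k+1)n+1)$ by setting $m=kn$, for which $2m+1=2kn+1$ and $n+m+1=(k+1)n+1$, so the theorem's graph is this spider verbatim and no reordering is needed. The required hypotheses become $m=kn\ge 2$, $m+n=(k+1)n$ even (which is precisely the standing assumption on $(k+1)n$), and $m+n=(k+1)n\ge 4$. These all hold whenever $kn\ge 2$, i.e.\ for all $n,k\ge 1$ except the single pair $(n,k)=(1,1)$.

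That lone boundary pair is the only step that does not follow for free: it gives $Sp(3,3,3)=Sp(3^{[3]})$, which lies outside the range $m+n\ge 4$ of the theorem. I would dispose of it by appealing to the earlier characterization of $\chi_{la}(Sp(2^{[n]},3^{[m]}))$, in which the parameter pair $(n,m)=(0,3)$ belongs to the set $B$; that gives $\chi_{la}(Sp(3^{[3]}))=0+3+1=4$. With this case handled separately, both claimed equalities hold, and I anticipate no computational difficulty beyond verifying the parameter substitutions.
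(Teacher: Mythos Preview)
Your proposal is correct and follows exactly the paper's approach: the paper derives the corollary in one line by substituting $m=n+2$ and $m=kn$ into Theorem~\ref{thm-sp(2n+1,2m+1,n+m+1)}. You are in fact more careful than the paper, since you noticed that the pair $(n,k)=(1,1)$ (giving $Sp(3,3,3)$) falls outside the hypotheses $m\ge 2$, $m+n\ge 4$ of that theorem and you disposed of it separately via the $Sp(2^{[n]},3^{[m]})$ classification; the paper's one-line derivation glosses over this boundary case.
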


\begin{theorem}\label{thm-equalodd} For $n\ge 0$ and $m\ge 1$, $\chi_{la}(Sp(2n+1,2m+1,2m+1))=4$.
\end{theorem}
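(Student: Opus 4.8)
By Corollary~\ref{cor-spider} we already have $\chi_{la}(Sp(2n+1,2m+1,2m+1))\ge 4$, so the entire task is to exhibit one local antimagic labelling whose set of induced labels has size at most $4$; the lower bound then forces the value to be exactly $4$. I would first dispose of the degenerate case $n=0$: there the leg of length $2n+1=1$ makes the spider contain a pendant edge, and Theorem~\ref{thm-leglen1} already gives $\chi_{la}=4$. Hence I may assume $n\ge 1$, so that all three legs have odd length at least $3$; write $q=2n+4m+3$ for the number of edges.

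The target labelling is modelled on Theorem~\ref{thm-sp(2n+1,2m+1,n+m+1)}: I would arrange a bijection $f:E\to[1,q]$ so that the core $u$ carries one large induced label while every other vertex carries a label in the three-element set $\{q-2,q-1,q\}$. Each leg is labelled by a zigzag that alternates a decreasing run of large edge labels with an increasing run of small ones, so that the induced label of every interior degree-$2$ vertex is the sum of a large and a small edge and stays in the narrow window $\{q-2,q-1,q\}$. The three largest labels $q,q-1,q-2$ are placed on the three pendant edges, which makes the three leaf colours equal to $q,q-1,q-2$ and thus fills the colour set without introducing anything new. If convenient, the two equal legs can instead be merged through $u$ into a single path of length $4m+2$, and Lemma~\ref{lem-path-4a} together with Corollary~\ref{cor-circ-perm} can be invoked to confine consecutive-edge sums to $\{N,N+1,N+2\}$ and to prescribe the difference of the two end labels, thereby realising the window.

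Because the two legs of length $2m+1$ are \emph{equal}, their two zigzags must be offset so as to occupy disjoint blocks of $[1,q]$ while keeping both colour profiles inside $\{q-2,q-1,q\}$; the natural device is to let one leg draw its small labels from one residue class and the other from the complementary class, and this is what forces a case split on the parity of $m$ (and possibly of $n$). Each odd leg contributes one ``leftover'' edge that the pairing cannot absorb, and these are precisely the edges incident to $u$; I would fix their three labels so that $f^+(u)$ exceeds $q$ and hence sits outside $\{q-2,q-1,q\}$. The verification then reduces to three routine checks: that the edge labels form a bijection onto $[1,q]$; that consecutive degree-$2$ vertices receive distinct colours, which is automatic because the two flanking edges of any such pair are distinct; and that $f^+(u)$, the sum of the three core edges, lies above $q$, so that $u$ is separated from all of its neighbours, giving exactly four colours.

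I expect the real work to be concentrated at the core, where I must simultaneously (i) keep the two equal legs on disjoint label sets, (ii) steer the three leftover core-incident labels so that their sum clears $q$, and (iii) ensure that the degree-$2$ vertices \emph{adjacent} to $u$ still land in $\{q-2,q-1,q\}$ even though one of their incident edges is now a core edge. This seam is exactly where the oddness of all three legs obstructs a uniform pairing, and it is the point at which the prescribed-gap freedom of Corollary~\ref{cor-circ-perm} (choosing the end difference $r$ to match the required parity and magnitude) earns its keep. Finally, I would anticipate that the smallest instances — for example $m=1$ or $m=n$ — fall outside the generic pattern and need a handful of explicit labellings, checked by hand, to finish the argument.
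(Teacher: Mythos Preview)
Your proposal is only a sketch of a strategy; no explicit labelling is produced, so as it stands it is not a proof. More importantly, the strategy you outline — placing $q,q-1,q-2$ on the three pendant edges and forcing every non-core vertex colour into $\{q-2,q-1,q\}$ — is \emph{not} the route the paper takes, and a quick check shows your plan runs into an arithmetic obstruction. If $q$ sits on the short leg and the two equal legs carry the zigzags $(q-1,1,q-3,3,\dots)$ and $(q-2,2,q-4,4,\dots)$, then the three core-incident labels are $q-2m-1$, $q-2m-2$ and some $\ell$ from the leftover block $[2m+1,2n+2m]$, while the second edge $x$ of the $q$-leg must satisfy $q+x=f^+(u)$, i.e.\ $x=\ell+2n$; but then $x>2n+2m$, outside the available range. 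So your intended scheme does not close as stated; making it work would require further rearrangement, and you have not shown that can be done.

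The paper's construction is both different and much simpler. It labels the three legs by three interleaved zigzags drawn from the three contiguous blocks $[1,n]\cup[q-n-1,q-1]$, $[n+1,n+m+1]\cup[q-n-m-1,q-n-2]$ and $\{q\}\cup[n+m+2,q-n-m-2]$. Crucially, one pendant edge receives the \emph{small} label $n+1$ (not $q-2$), so the four induced colours are
\[
\{\,n+1,\ q-1,\ q,\ 2q-n-m-2\,\},
\]
with the large colour shared by the core $u$ and the vertex $z_2$ adjacent to the pendant carrying $q$. This single formula covers every $n\ge 0$ and $m\ge 1$ with no parity case split, no appeal to Lemma~\ref{lem-path-4a} or Corollary~\ref{cor-circ-perm}, and no ad hoc small cases; in particular the case $n=0$ needs no separate treatment. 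The moral is that insisting on the colour window $\{q-2,q-1,q\}$ is an unnecessary constraint here — allowing one pendant to take a small colour is exactly what makes the construction uniform.
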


\begin{proof} Let $f$ be a required labeling.
Let $P_{2n+2}=x_{1}\cdots x_{2n+1}x_{2n+2}$, $P_{2m+2}= y_{1}\cdots y_{2m+1}y_{2m+2}$ and $P_{2h+2}=z_1\cdots z_{2h+1}z_{2h+2}$. Let $u=x_{2n+2}=y_{2m+2}=z_{2h+2}$.
 We label $P_{2n+2}$ and $P_{2m+2}$ as:
\[\begin{array}{c|*{8}{|c}|}
i & 1 & 2 & 3 & 4 & \cdots & 2n-1 & 2n & 2n+1\\\hline\hline
x_ix_{i+1} & q-1 & 1 & q-2 & 2 & \cdots & q-n & n & q-n-1\\\hline
\end{array}\]
When $n=0$. The above table has only one column.
\[\begin{array}{c|*{8}{|c}|}
i & 1 & 2 & 3 & 4 & \cdots & 2m-1 & 2m & 2m+1\\\hline\hline
y_iy_{i+1} & n+1 & q-n-2 & n+2 & q-n-3 & \cdots & n+m & q-n-m-1 & n+m+1\\\hline
\end{array}\]
\[\begin{array}{c||c|*{6}{|c}|}
i & 1 &  2 & 3 & \cdots & 2h-1 & 2h & 2h+1\\\hline\hline
z_iz_{i+1} & q& q-n-m-2 & n+m+2 &  \cdots & n+m+h & q-n-m-h-1 & n+m+h+1\\\hline
\end{array}\]
Now $f^+(u)=q+n+2m+h+1=3n+4m+3h+4$, $f^+(z_2)=2q-n-m-2=3n+3m+4h+4$, $f^+(y_1)=n+1$ and $f^+(w)\in\{q, q-1\}$ for other vertex $w$. Thus, when $m=h$ we have the result.\end{proof}

\begin{example}\hspace*{\fill}{}\\[-4mm]

\nt\begin{minipage}[b]{8.5cm}
$Sp(5,5,9)\cong Sp(9,5,5)$:\\
$\begin{array}{*{9}{c}}
18 & 1 & 17 & 2 & 16 & 3 & 15 & 4 & 14\\
5 & 13 & 6 & 12 & 7\\
19 & 11 & 8 & 10 & 9
\end{array}$\\
(30, 19, 18, 5)
\end{minipage}
\begin{minipage}[b]{8.5cm}
$Sp(7,9,9)$:\\
$\begin{array}{*{11}{c}}
24 & 1 & 23 & 2 & 22 & 3 & 21\\
4 & 20 & 5 & 19 & 6 & 18 & 7& 17& 8\\
25 & 16& 9 & 15& 10& 14& 11& 13& 12\\
\end{array}$\\
(41, 25, 24, 4)
\end{minipage}\rsq
\end{example}

Starting from the labeling of $Sp(2n+1,2m+1,2m+1)$ defined in the proof of Theorem~\ref{thm-equalodd} for $m >n$, we move the $2n+2$ right most numbers of the second leg to the end of first leg. We then can get a required labeling for $Sp(4n+3, 2m-2n-1,2m+1)$.

Namely, change the following assignment
\[\begin{array}{c|*{7}{|c}|}
i & 1 & 2 & 3 & \cdots & 2n-1 & 2n & 2n+1\\\hline\hline
x_ix_{i+1} & q-1 & 1 & q-2 & \cdots & q-n & n & q-n-1\\\hline
\end{array}\]
\[\begin{array}{c|*{4}{|c}|*{5}{|c}|}
i & 1 & 2 & \cdots & 2(m-n)-1 & 2(m-n) & \cdots & 2m-1 & 2m & 2m+1\\\hline\hline
y_iy_{i+1} & n+1 & q-n-2 & \cdots & m & q-m-1 &  \cdots & n+m & q-n-m-1 & n+m+1\\\hline
\end{array}\]
to
\[\T{10}\begin{array}{c|*{7}{|c}|*{5}{|c}|}
j & 1 & 2 & 3 & \cdots & 2n-1 & 2n & 2n+1 & 2n+2 & \cdots & 4n+1 & 4n+2 & 4n+3\\ \hline\hline
v_jv_{j+1} & q-1 & 1 & q-2 & \cdots & q-n & n & q-n-1 & q-m-1 &  \cdots & n+m & q-n-m-1 & n+m+1\\\hline
\end{array}\]
\[\begin{array}{c|*{4}{|c}|}
i & 1 & 2 & \cdots & 2(m-n)-1 \\\hline\hline
w_iw_{i+1} & n+1 & q-n-2 & \cdots & m\\\hline
\end{array}\]
and keep the labeling of the leg $P_{2m+2}=z_1\cdots z_{2m+2}$ as
\[\begin{array}{c||c|*{6}{|c}|}
j & 1 &  2 & 3 & \cdots & 2m-1 & 2m & 2m+1\\\hline\hline
z_jz_{j+1} & q& q-n-m-2 & n+m+2 &  \cdots & n+2m & q-n-2m-1 & n+2m+1\\\hline
\end{array}\] Hence we have a labeling, say $g$, for $P_{4n+4}=v_1\cdots v_{4n+4}$, $P_{2m-2n}=w_1\cdots w_{2m-2n}$ and $P_{2m+2}=z_1\cdots z_{2m+2}$.
Now $g^+(u)=2n+4m+2=q-1$, $g^+(z_2)=2q-n-m-2=3n+7m+4=g^+(v_{2n+2})$, $g^+(y_1)=n+1$ and $g^+(v)\in\{q, q-1\}$ for other vertex $v$. So we have a local antimagic 4-labeling for $Sp(2(m-n)-1, 2m+1, 4n+3)$. After rewriting the parameters we have
\begin{theorem}\label{thm-oddspecial} For $m>  n\ge 0$, $\chi_{la}(Sp(2n+1, 2m+1, 4(m-n)-1))=4$.
\end{theorem}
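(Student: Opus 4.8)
The lower bound is immediate: the spider has three legs and hence three pendant vertices, so Theorem~\ref{thm-pendant} (equivalently Corollary~\ref{cor-spider} with $d=3$) gives $\chi_{la}(Sp(2n+1,2m+1,4(m-n)-1))\ge 4$. Thus the whole task is to exhibit a local antimagic labeling whose induced vertex labeling uses only four colors; the boundary case $n=0$, where the first leg has length $1$, is in any case already covered by Theorem~\ref{thm-leglen1}.

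My plan is not to build a labeling from scratch but to transport the one constructed in Theorem~\ref{thm-equalodd} for the two-equal-legs spider $Sp(2N+1,2M+1,2M+1)$, $M>N\ge 0$. The first step is bookkeeping: putting $M=m$ and $N=m-n-1$ turns the triple $2(M-N)-1,\,2M+1,\,4N+3$ into exactly $2n+1,\,2m+1,\,4(m-n)-1$, while $M>N\ge 0$ and $M\ge 1$ are equivalent to $m>n\ge 0$. So it is enough to realise the target spider (with these $N,M$) as the outcome of a local surgery on the Theorem~\ref{thm-equalodd} labeling.

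The surgery is the substance of the argument. In that labeling the alternating pattern $q-1,1,q-2,2,\dots$ on each leg forces every degree-$2$ vertex to get color $q$ or $q-1$, the only exceptions being the core and one pendant. I would take the entire first leg together with the block of the $2N+2$ edges of the second leg nearest the core and concatenate them into a single leg of length $4N+3$, the relocated block forming its core-side; what remains of the second leg becomes a leg of length $2M-2N-1$. The edge multiset is untouched, so this is still a bijection onto $[1,q]$ with $q=2N+4M+3$. Because the relocated block is a contiguous core-tail of the old second leg (kept in the same orientation) and the truncated leg is a contiguous pendant-segment, every genuinely internal vertex retains its former color in $\{q,q-1\}$; only the new junction $v_{2N+2}$, where the first leg's old core-edge meets the block, and the new core itself call for fresh computation.

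This recomputation is exactly where the work — and the only real obstacle — lies. I expect to find that at $v_{2N+2}$ the edges $q-N-1$ and $q-M-1$ meet, giving color $2q-N-M-2=3N+7M+4$, which is precisely the value the core carried in Theorem~\ref{thm-equalodd}; so the large ``hub'' color is merely transplanted (and it also survives at $z_2$ on the unchanged third leg), while the new core sums to $q-1=2N+4M+2$ and the one exceptional pendant keeps color $N+1$. Hence the image of $f^+$ is $\{\,q,\ q-1,\ 3N+7M+4,\ N+1\,\}$, four values that are pairwise distinct whenever $M>N\ge 0$. A final routine adjacency check should close the proof: the three neighbours of the new core all have color $q$, the neighbours of each hub vertex lie in $\{q-1,q\}$, and the small-color pendant's unique neighbour has color $q-1$, so no edge is monochromatic. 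Renaming $(N,M)\mapsto(n,m)$ then gives the theorem.
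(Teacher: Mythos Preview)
Your proposal is correct and follows essentially the same route as the paper: both start from the Theorem~\ref{thm-equalodd} labeling of $Sp(2N+1,2M+1,2M+1)$ with $M>N\ge 0$, relocate the block of $2N+2$ core-side edges of the second leg onto the end of the first leg to obtain a labeling of $Sp(2(M-N)-1,2M+1,4N+3)$, and then reparametrize. Your computations of the junction color $2q-N-M-2=3N+7M+4$, the new core color $q-1$, and the exceptional pendant color $N+1$ match the paper's values exactly, and your adjacency check is slightly more explicit than the paper's.
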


\begin{example}\hspace*{\fill}{}\\[-4mm]

\nt\begin{minipage}[b]{7cm}\T{9}
Starting from the labeling of $Sp(3,9,9)$:\\
$\begin{array}{*{9}{c}}
20 &  1 & 19\\
2 & 18&  3& 17& 4& \underline{16}& \underline{5}& \underline{15}& \underline{6}\\
21& 14& 7& 13& 8& 12& 9& 11& 10
\end{array}$\\\\
we get $Sp(7,5,9) \cong Sp(5,7,9)$: \\
\nt
$\begin{array}{*{9}{c}}
20& 1& 19& \underline{16}& \underline{5}& \underline{15}& \underline{6}\\
2 & 18& 3& 17& 4\\
21& 14& 7& 13& 8& 12& 9& 11& 10
\end{array}$\\
(2, 10, 21, 35)
\end{minipage}
\begin{minipage}[b]{11cm}\T{9}
Starting from the labeling of $Sp(7,13,13)$:\\
$\begin{array}{*{13}{c}}
32& 1& 31& 2& 30& 3& 29\\
4& 28& 5& 27& 6& \underline{26}& \underline{7}& \underline{25}& \underline{8}& \underline{24}& \underline{9}& \underline{23}& \underline{10}\\
33& 22& 11& 21& 12& 20& 13& 19& 14& 18& 15& 17& 16
\end{array}$\\\\
we get $Sp(15,5,13) \cong Sp(5,13,15)$: \\
$\begin{array}{*{15}{c}}
32& 1& 31& 2& 30& 3& 29& \underline{26}& \underline{7}& \underline{25}& \underline{8}& \underline{24}& \underline{9}& \underline{23}& \underline{10}\\
4& 28& 5& 27& 6\\
33& 22& 11& 21& 12& 20& 13& 19& 14& 18& 15& 17& 16
\end{array}$\\
(4, 32, 33, 55)
\end{minipage}\rsq
\end{example}

%{\blue Combining the results in the previous sections, the unsolved case is:\\
%$Sp(2n+1,2m+1,2h+1)$ for distinct $n,m,h$, where $m>n\ge 1$, $h>1$ and (i) $h\ne 2(m-n)-1$, (ii) $2h\ne n+m+3k$ with $m+n+k\ge 4k$ is even.}
%So, more specifically, we have to consider
%1) $Sp(2n+1,2m+1,4h+1)$ for $m > n > 1$, $n\ne 2h$ and $m \ne  2h$.\\
%2) $Sp(2n+1,2n+2m+1,4h+3)$ for $n\ge 2$, $m \ge 1$, $h\ge m$ and $n\ne 2h+1$.\\
%3) $Sp(2n+1,2n+2m+1,4h+3)$ for $n\ge 2$,$m\ge 1$, $h \le m-2$ and $n\ne 2h+1$.

%Following we shall solve some special cases.

\begin{theorem}\label{thm-Sp(2m+1,2m+3,4m-3)} For $m\ge 4$, $\chi_{la}(Sp(2m+1,2m+3,4m-3)) = 4$. \end{theorem}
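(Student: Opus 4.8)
The lower bound needs nothing new: $Sp(2m+1,2m+3,4m-3)$ is a $3$-leg spider, so Corollary~\ref{cor-spider} already gives $\chi_{la}\ge 4$. Everything therefore reduces to producing a single local antimagic labeling $f$ with $c(f)=4$. Put $q=(2m+1)+(2m+3)+(4m-3)=8m+1$ and write the three legs, each as a path ending at the core $u$, as $P_{2m+2}=x_1\cdots x_{2m+2}$, $P_{2m+4}=y_1\cdots y_{2m+4}$ and $P_{4m-2}=z_1\cdots z_{4m-2}$, with $u=x_{2m+2}=y_{2m+4}=z_{4m-2}$. I would first note why a fresh construction is needed: matching the two shorter legs $2m+1,2m+3$ to the first two legs of Theorem~\ref{thm-sp(2n+1,2m+1,n+m+3k+1)} forces the third leg to equal $2m+2+3k$, so $4m-3$ arises only when $3\mid 2m-5$ (and even then the required parity of the parameters can fail), while the remaining leg orderings and Theorems~\ref{thm-sp(2n+1,2m+1,n+m+1)}--\ref{thm-oddspecial} likewise miss most $m$. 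Hence I need a construction uniform in $m\ge 4$.

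The plan is to imitate the explicit three-odd-leg labelings of Theorems~\ref{thm-sp(2n+1,2m+1,n+m+1)} and \ref{thm-equalodd}. On each leg I would run an alternating ``high--low'' backbone, reserving the three largest labels $q,q-1,q-2$ for the three pendant edges, so that the leaves $x_1,y_1,z_1$ receive the boundary colours $q,q-1,q-2$. The low labels $1,2,\dots$ and the remaining high labels are interleaved so that each interior degree-$2$ vertex has induced sum $H+L\in\{q-2,q-1,q\}$, with consecutive interior sums toggling between two of these values so that adjacent vertices are coloured differently. Because all three legs have odd length, in the pure alternation their core-incident edges fall on high labels; combining this with a short irregular patch (described below) I would arrange the three labels meeting $u$ to sum to more than $q$, making $f^+(u)$ the single fourth colour. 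If this succeeds, $f^+(V)=\{q-2,q-1,q,f^+(u)\}$, $c(f)=4$, and with the lower bound the theorem follows.

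Concretely I would present three tables, as in the cited proofs, splitting $[1,q]$ into a low block, a middle block and a high block distributed across the legs; the first and last columns of each table (the pendant-incident and core-incident edges) are treated as exceptions. As in Theorem~\ref{thm-sp(2n+1,2m+1,n+m+1)} I expect one leg to carry a short irregular patch near the core whose sole purpose is to pin $f^+(u)$ to the intended constant and to let the bijection close up; this typically forces one interior vertex to share the colour $f^+(u)$, which is harmless since it is not adjacent to $u$. Since $P_{4m-2}$ is the longest leg and must absorb most of the middle block, and the way its low and high runs meet at the central turning point depends on the parity of $m$, I would split the verification into the cases $m$ even and $m$ odd, adjusting one or two boundary labels in each.

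The hard part is the simultaneous bookkeeping rather than any single idea: I must check that $f$ is a bijection onto $[1,q]$, that \emph{every} interior sum lands in $\{q-2,q-1,q\}$, that neighbouring interior colours differ, and that $f^+(u)$ equals the intended constant, all for leg lengths that evade the earlier theorems. The tightest constraint is that the two shorter legs exceed the longest by only $(2m+1)+(2m+3)-(4m-3)=7$, so the long leg must carry nearly all of the middle-range labels while still closing correctly at both ends (pendant colour in $\{q-2,q-1,q\}$, core-incident label high enough); verifying this simultaneously for all $m\ge 4$ is where the care is concentrated. As a possible shortcut I would first test whether the labeling can instead be obtained by the block-moving surgery used to prove Theorem~\ref{thm-oddspecial}, transferring a run of labels between two legs of a suitable equal-odd-leg labeling from Theorem~\ref{thm-equalodd}; if that surgery applies after reindexing it would eliminate most of the casework, though I expect it to need adaptation for this family.
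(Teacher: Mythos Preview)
Your proposal is a plan, not a proof: you describe the shape of a construction (alternating high--low runs, three tables, an irregular patch near the core, a parity case split) but never actually write down the bijection $f$ or verify a single induced sum. For this theorem the entire content \emph{is} the explicit labeling; without it nothing has been shown. The paper's proof does exactly what you say you ``would'' do --- three explicit tables for $P_{2m+2}$, $P_{2m+4}$, $P_{4m-2}$ with alternating blocks and a short exceptional stretch --- and then checks $f^+(x_2)=f^+(u)=14m$ with all other induced labels in $\{8m+1,8m,6m+1\}$. So your outline is on the right track, but the work you flag as ``the hard part'' is precisely the part you have not done.

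Two points where your sketch diverges from what actually works. First, you propose reserving $q,q-1,q-2$ for the three pendant edges so that the colour set is $\{q-2,q-1,q,f^+(u)\}$. The paper does \emph{not} do this: the pendant edges receive $8m+1$, $8m$, and $6m+1$, and the four colours are $\{6m+1,\,8m,\,8m+1,\,14m\}$. Forcing $q-2$ onto the third pendant may make the bookkeeping on the long leg harder, not easier, so if you try to carry out your scheme and it jams, this is the assumption to relax. Second, you anticipate a case split on the parity of $m$; the paper's construction is uniform in $m\ge4$ with no such split, so the parity difficulty you foresee is an artefact of not yet having the right block boundaries. The block-moving surgery from Theorem~\ref{thm-oddspecial} that you mention as a shortcut is indeed used in the paper --- but \emph{after} this theorem, to derive the next result from it --- so it will not help you here.
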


\begin{proof} Let $f$ be a bijective edge labeling of $Sp(2m+1,2m+3,4m-3)$. Let $P_{2m+2}=x_{1}\cdots x_{2m+1}x_{2m+2}$, $P_{2m+4}= y_{1}\cdots y_{2m+3}y_{2m+4}$ and $P_{4m-2}=z_1\cdots z_{4m-3}z_{4m-2}$. Let $u=x_{2m+2}=y_{2m+4}=z_{4m-2}$. Now, $q=8m+1$. We label $P_{2m+2}$, $P_{2m+4}$ and $P_{4m-2}$ as:

\begin{align*}&\begin{array}{c|*{8}{|c}|}
i & 1 & 2 & 3 & 4 & 5 & \cdots & 2m & 2m+1  \\\hline\hline
x_ix_{i+1} & 8m+1 & 6m-1 & 2m+2 & 6m-2 & 2m+3  & \cdots &  5m & 3m+1   \\\hline
\end{array}\\
&\begin{array}{c|*{7}{|c}"*{4}{c|}}
i & 1 & 2 & 3 & 4 & \cdots & 2m-2 & 2m-1 &2m & 2m+1 & 2m+2 & 2m+3 \\\hline\hline
y_iy_{i+1} & 8m & 1 & 8m-1 & 2 & \cdots & m-1 & 7m+1&7m-1 & m+1 & 7m & m   \\\hline
\end{array}\\
&\begin{array}{c|*{7}{|c}|}
i & 1 & 2 & 3 & 4 & \cdots & 2m-5 & 2m-4   \\\hline\hline
z_iz_{i+1} & 6m+1 & 2m-1 & 6m+2 & 2m-2 & \cdots & 7m-2 & m+2  \\\hline
\end{array}\\
&\begin{array}{c|*{8}{|c}|}
i & 2m-3 & 2m-2 & 2m-1 & 2m & \cdots & 4m-9 & 4m-8 & 4m-7 \\\hline\hline
z_iz_{i+1} & 5m-1 & 3m+2 & 5m-2 & 3m+3 & \cdots & 4m+2 & 4m-1 & 4m+1 \\\hline
\end{array}\\
&\begin{array}{c|*{4}{|c}|}
i & 4m-6 & 4m-5 & 4m-4 & 4m-3 \\\hline\hline
z_iz_{i+1} & 4m & 2m+1 & 6m & 2m \\\hline
\end{array}\end{align*}

Now $f^+(x_2) = f^+(u) = 14m$ and $f^+(w)\in\{8m+1, 8m, 6m+1\}$ for other vertex $w$. Thus, $f$ is a required local antimagic 4-coloring.
\end{proof}

\begin{example} $Sp(11,13,17)$:\\
$\begin{array}{*{17}{c}}
41  & 29  & 12  & 28  & 13  & 27  & 14  & 26  & 15  & 25  & 16  \\
40  & 1  & 39  & 2  & 38  & 3  & 37  & 4  & 36  & 34 & 6 & 35 & 5\\
31  & 9  & 32  & 8  & 33  & 7  & 24  & 17  & 23  & 18  & 22  & 19  & 21 & 20 & 11 & 30 & 10 \\
\end{array}$\\
$(31,40,41,70)$ \rsq
\end{example}

Observe that if we move the last four labels of $P_{2m+2}$ (in the proof of Theorem~\ref{thm-Sp(2m+1,2m+3,4m-3)}) to the right end of $P_{2m+4}$ to form a labeling of $P_{2m+8}$, we then obtain a local antimagic 4-labeling for $Sp(2m-3,2m+7,4m-3)$. Renaming the parameters, we get the following theorem.

\begin{theorem} For $m\ge 2$, $\chi_{la}(Sp(2m+1,2m+11,4m+5)) = 4$. \end{theorem}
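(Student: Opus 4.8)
The plan is to bootstrap from the explicit local antimagic $4$-labeling of $Sp(2m+1,2m+3,4m-3)$ constructed in the proof of Theorem~\ref{thm-Sp(2m+1,2m+3,4m-3)}, and to obtain the desired labeling by the edge-relocation flagged in the remark preceding the statement. Concretely, I would start from that labeling of size $q=8m+1$, whose induced palette is $\{6m+1,\,8m,\,8m+1,\,14m\}$, detach the four labels sitting on the core-adjacent edges $x_{2m-2}x_{2m-1},\,x_{2m-1}x_{2m},\,x_{2m}x_{2m+1},\,x_{2m+1}x_{2m+2}$ of the length-$(2m+1)$ leg (these carry $5m+1,\,3m,\,5m,\,3m+1$), and lay them down at the core end of the length-$(2m+3)$ leg so that it grows into a path $P_{2m+8}$. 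The length-$(4m-3)$ leg and every remaining label are left untouched. Since no label is created or destroyed, the outcome is automatically a bijection $E\to[1,8m+1]$ onto a spider with leg lengths $2m-3$, $2m+7$, $4m-3$; a final reindexing $m\mapsto m+2$ then turns this into $Sp(2m+1,2m+11,4m+5)$, with the hypothesis $m\ge 4$ becoming $m\ge 2$.

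The bulk of the work is then to re-verify that the reassigned labeling is again local antimagic with exactly four colors. The useful observation is that almost every vertex keeps both of its incident edges, hence retains the color it had originally; only a bounded list of vertices is affected, namely the former core (which becomes an interior vertex of the lengthened leg), the handful of new interior vertices created along the grafted block, the new pendant end left by truncating the short leg, and the new core where the shortened, lengthened, and untouched legs now meet. I would compute the induced sums at exactly these vertices as functions of $m$, using the pairwise sums of the four relocated labels together with the new core-adjacent labels $3m-1$ (shortened leg) and $2m$ (untouched leg), and check that every value again falls in $\{6m+1,8m,8m+1,14m\}$ and that no two adjacent vertices agree.

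The hard part will be precisely at the new core and its neighbours. Writing the relocated labels in leaf-to-core order $\ell_1,\ell_2,\ell_3,\ell_4$, the former core acquires color $m+\ell_1$, which forces $\ell_1=5m+1$; the next two junctions then constrain $\ell_2,\ell_3$, while the new core receives $\ell_4+(3m-1)+2m=\ell_4+5m-1$. The obstacle is that the penultimate vertex of the untouched length-$(4m-3)$ leg already carries $8m$, so I must keep the new core off $8m$ while also avoiding the neighbouring value $8m+1$; a bare relocation tends to push the core color back onto $8m$ (a collision) or onto an inadmissible value, so the delicate step is to arrange the four labels—and, if a clash persists, to absorb it by a compensating local swap among the core-adjacent edges of the untouched leg, an adjustment touching only two or three induced sums and easy to recheck—so that the palette stays of size four. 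Once the coloring is confirmed proper with four colors, Corollary~\ref{cor-spider} (equivalently Theorem~\ref{thm-pendant}) supplies the matching lower bound $\chi_{la}\ge 4$, giving $\chi_{la}(Sp(2m+1,2m+11,4m+5))=4$.
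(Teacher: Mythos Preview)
Your approach is exactly the one the paper takes: remove the four core-adjacent labels $5M+1,3M,5M,3M+1$ from the first leg of the labeling in Theorem~\ref{thm-Sp(2m+1,2m+3,4m-3)}, append them (in that order) to the core end of the second leg, and then reindex $M\mapsto m+2$. The paper simply writes out the resulting tables.

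However, you have put your finger on a genuine flaw that the paper's own proof overlooks. In the relocated labeling (in the original parameter $M$), the new core receives
\[
(3M-1)+(3M+1)+2M=8M,
\]
while its neighbour $z_{4M-3}$ on the untouched third leg already carries $6M+2M=8M$. This is an honest collision: in the paper's reindexed tables one has $f^+(u)=(3m+5)+(3m+7)+(2m+4)=8m+16$ and $f^+(z_{4m+5})=(6m+12)+(2m+4)=8m+16$, and these two vertices are adjacent. The worked example $Sp(9,19,21)$ exhibits it numerically: the core gets $17+19+12=48$ and $z_{21}$ gets $36+12=48$. So the paper's displayed labeling is \emph{not} local antimagic as written.

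Your diagnosis is therefore sharper than the paper's, but your own proposal stops at the same gap: you promise ``a compensating local swap among the core-adjacent edges of the untouched leg'' without exhibiting one, and in fact the obvious two-label swaps on the tail of the third leg (e.g.\ exchanging $6M\leftrightarrow 2M$ or $2M+1\leftrightarrow 2M$) each introduce a fifth colour rather than repairing the clash. Since the palette constraint $m+\ell_1\in\{6M+1,8M,8M+1,14M\}$ forces $\ell_1=5M+1$, and the subsequent constraints force $\ell_2=3M$ and $\ell_3,\ell_4\in\{5M,3M+1\}$, every admissible ordering lands the core on $8M$ (collision) or on $10M-1$ (fifth colour). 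A repair therefore cannot touch only ``two or three induced sums'' as you suggest; you would need to produce an explicit corrected labeling, or a different relocation scheme, to complete the proof.
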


\begin{proof} Let $f$ be a bijective edge labeling of $Sp(2m+1,2m+11,4m+5)$. Now, $q = 8m+17$. Let $u = x_{2m+2} = y_{2m+12} = z_{4m+6}$. The labelings of $P_{2m+2}=x_1\cdots x_{2m+2}$, $P_{2m+12}=y_1\cdots y_{2m+12}$ and $P_{4m+6} = z_1\cdots z_{4m+6}$ are given as follows.

\begin{align*}&\begin{array}{c|*{8}{|c}|}
i & 1 & 2 & 3 & 4 & 5 & \cdots & 2m & 2m+1  \\\hline\hline
x_ix_{i+1} & 8m+17 & 6m+11 & 2m+6 & 6m+10 & 2m+7  & \cdots &  5m+12 & 3m+5   \\\hline
\end{array}\\
&\begin{array}{c|*{7}{|c}|}
i & 1 & 2 & 3 & 4 & \cdots & 2m+2 & 2m+3  \\\hline\hline
y_iy_{i+1} & 8m+16 & 1 & 8m+15 & 2 & \cdots & m+1 & 7m+15  \\\hline
\end{array}\\
&\begin{array}{c|*{8}{|c}|}
i & 2m+4 & 2m+5 & 2m+6 & 2m+7 & 2m+8 & 2m+9 & 2m+10 & 2m+11 \\\hline\hline
y_iy_{i+1} & 7m+13 & m+3 & 7m+14 & m+2 & 5m+11 & 3m+6 & 5m+10 & 3m+7 \\\hline
\end{array}\\
&\begin{array}{c|*{7}{|c}|}
i & 1 & 2 & 3 & 4 & \cdots & 2m-1 & 2m   \\\hline\hline
z_iz_{i+1} & 6m+13 & 2m+3 & 6m+14 & 2m+2 & \cdots & 7m+12 & m+4  \\\hline
\end{array}\\
&\begin{array}{c|*{8}{|c}|}
i & 2m+1 & 2m+2 & 2m+3 & 2m+4 & \cdots & 4m-1 & 4m & 4m+1 \\\hline\hline
z_iz_{i+1} & 5m+9 & 3m+8 & 5m+8 & 3m+9 & \cdots & 4m+10 & 4m+7 & 4m+9 \\\hline
\end{array}\\
&\begin{array}{c|*{4}{|c}|}
i & 4m+2 & 4m+3 & 4m+4 & 4m+5 \\\hline\hline
z_iz_{i+1} & 4m+8 & 2m+5 & 6m+12 & 2m+4 \\\hline
\end{array}\end{align*}

Now, $f^+(x_2) = f^+(y_{2m+4}) = 14m+28$  and $f^+(w)\in\{8m+17, 8m+16, 6m+1\}$ for other vertex $w$. Thus, $f$ is a required local antimagic 4-coloring.
\end{proof}

\begin{example} $Sp(9,19,21)$:\\
$\begin{array}{*{21}{c}}
49 & 35 & 14 & 34 & 15 & 33 & 16 & 32 & 17\\
48 & 1 & 47 & 2 & 46 & 3 & 45 & 4 & 44 & 5 & 43 & 41 & 7 & 42 & 6 & 31 & 18 & 30 & 19\\
37 & 11 & 38 & 10 & 39 & 9 & 40 & 8 & 29 & 20 & 28 & 21 & 27 & 22 & 26 & 23 & 25 & 24 & 13 & 36 & 12\\
\end{array}$\\
$(37,48,49,84)$ \rsq
\end{example}

\begin{theorem}\label{thm-3oo} For $h\ge 0$, $m\ge 1$, $\chi_{la}(Sp(3,2m+1,2m+2h+1))=4$.
\end{theorem}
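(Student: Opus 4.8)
The plan is to prove the lower bound for free and then to exhibit an explicit local antimagic $4$-labeling. Since $Sp(3,2m+1,2m+2h+1)$ is a spider with $d=3$ legs, Corollary~\ref{cor-spider} already gives $4\le\chi_{la}\le 5$, so it suffices to produce a labeling with exactly four induced colours. I would first dispose of the degenerate case $h=0$: there $Sp(3,2m+1,2m+1)=Sp(2\cdot 1+1,2m+1,2m+1)$, which is settled by Theorem~\ref{thm-equalodd} with $n=1$. From now on assume $h\ge 1$.

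For the construction, set $q=4m+2h+5$ and name the legs $A=a_1a_2a_3u$ (length $3$), $B=b_1\cdots b_{2m+1}u$ (length $2m+1$), and $C=c_1\cdots c_{2m+2h+1}u$ (length $2m+2h+1$), the last being the longest and playing the role of the ``free'' leg. Following the template of Theorems~\ref{thm-eol} and~\ref{thm-equalodd}, I would place the largest label on the pendant edge of $C$, $f(c_1c_2)=q$, so that $f^+(c_1)=q$, and then label legs $A$ and $B$ by the high/low zigzag, alternating a descending run $q-1,q-2,\dots$ on the odd edge positions with an ascending run $1,2,\dots$ on the even positions. The ranges are to be chosen so that the labels consumed by $A$, $B$, and the edge $c_1c_2$ occupy exactly the top and bottom of $[1,q]$, leaving the integers of a contiguous middle block $[a+1,a+N]$, where $N=2m+2h$ is the number of still-unlabelled edges $c_2c_3,\dots,c_{2m+2h+1}u$ of $C$. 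This zigzag forces every interior vertex of $A$ and $B$ into the fixed small set $\{q-2,q-1,q\}$ and pins down the two core-incident labels $f(a_3u)$ and $f(b_{2m+1}u)$.

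It then remains to label the $N$ free edges of $C$ with the block $[a+1,a+N]$. For this I would invoke Corollary~\ref{cor-circ-perm}: it arranges this block into a sequence whose consecutive sums lie in $\{2a+N,2a+N+1,2a+N+2\}$ while prescribing the end-gap $|a_N-a_1|=r$. Choosing the ranges so that $2a+N+2=q$ keeps every interior vertex of $C$ in $\{q-2,q-1,q\}$, and assigning the endpoints so that $f^+(c_2)=f(c_1c_2)+f(c_2c_3)$ equals the core value $f^+(u)=f(a_3u)+f(b_{2m+1}u)+f(c_{2m+2h+1}u)$ collapses what would otherwise be a fifth colour; this ties $a_N-a_1$ to a prescribed value and hence fixes the required $r$. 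Since $c_2$ is not adjacent to $u$ (as $N\ge 3$), the coincidence $f^+(c_2)=f^+(u)$ is harmless, and the labeling then uses exactly the four colours $\{q-2,q-1,q,f^+(u)\}$.

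The main obstacle is precisely this last piece of bookkeeping. Because all three legs are odd, the zigzags leave a parity imbalance, so a naive labeling produces five distinct induced values ($q$, $q-1$, $q-2$, the core colour, and the stray value $f^+(c_2)$), and the entire difficulty is to force $f^+(c_2)=f^+(u)$ while the block $[a+1,a+N]$ is used bijectively. The gap $r$ demanded by Corollary~\ref{cor-circ-perm} must match the parity of the difference between the target core value and $2a+N$, which I expect to drive a short case split on the parity of $m$, together with a check that $r$ lies in the admissible set for $N=2m+2h$. Finally, the few small cases in which $C$ is too short for the hypotheses of Corollary~\ref{cor-circ-perm} must be handled separately by explicit tables, in the spirit of the examples already displayed in this section.
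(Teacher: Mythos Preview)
Your reduction of the case $h=0$ to Theorem~\ref{thm-equalodd} is fine, and the broad strategy (put $q$ on a pendant, zigzag two legs, fill the rest with a block via Corollary~\ref{cor-circ-perm}, then force $f^+(c_2)=f^+(u)$) is natural. But the plan has a hard parity obstruction that cannot be repaired by ``a short case split on the parity of $m$''.

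With your set-up, the edge $c_1c_2$ carries $q$, so $f^+(c_1)=q$, and $f^+(a_1)$, $f^+(b_1)$ are two further distinct values below $q$. For a $4$-labeling these three pendant labels must be three of the four colours, and since the interior sums produced by Corollary~\ref{cor-circ-perm} on a block $[a+1,a+N]$ are the three \emph{consecutive} integers $\{2a+N,\,2a+N+1,\,2a+N+2\}$, you are forced into $\{q-2,q-1,q\}$ as the non-core colours, exactly as you claim. Now $N=2m+2h$ is even, so $2a+N$ is even for every integer $a$; but $q=4m+2h+5$ is odd, hence $q-2$ is odd. Therefore there is no choice of $a$ with $\{2a+N,\,2a+N+1,\,2a+N+2\}=\{q-2,q-1,q\}$. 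Concretely, the only way to leave a \emph{contiguous} block after zigzagging $A$ and $B$ is something like $A:\ q-1,\,1,\,q-2$ and $B:\ q-3,\,2,\,q-4,\,3,\dots,\,q-m-3$, which leaves $[m+2,q-m-4]$; then $2a+N=q-3$, the $C$-interior sums land in $\{q-3,q-2,q-1\}$, and together with $f^+(b_1)=q-3$, $f^+(a_2)=q$, $f^+(b_3)=q-2$ you already have four colours $\{q-3,q-2,q-1,q\}$ before the core enters --- five in total even after matching $f^+(c_2)=f^+(u)$.

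The paper avoids this by a genuinely different move: it places $q$ on the pendant edge of the \emph{length-$3$} leg, zigzags the two legs of length $2m+1$ with starts $q-1$ and $q-2$, and then extends one of them to length $2m+2h+1$ not by a single contiguous block but by two pieces $Q$ and $R$ drawn from non-adjacent sub-intervals, while the two middle edges of the length-$3$ leg absorb the leftover pair. The key identity $f^+(w_2)=q+f(w_2w_3)=f^+(u)$ is then arranged directly, without appealing to Corollary~\ref{cor-circ-perm}. The cases $h=1$ and $h=0$ are handled separately by explicit labelings.
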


\begin{proof} Let $f$ be a required labeling. We first consider $h\ge 2$.
Let $P^{(1)}_{2m+2}=x_{1}\cdots x_{2m+1}x_{2m+2}$, $P^{(2)}_{2m+2}= y_{1}\cdots y_{2m+1}y_{2m+2}$ and $P_4=w_1w_2w_3w_4$. Firstly, let $f(w_1w_2)=q$.

 We label $P^{(j)}_{2m+2}$ by integers in $[1,2m]\cup [q-2m-2,q-1]$:
\[\begin{array}{c|*{8}{|c}|}
i & 1 & 2 & 3 & 4 & \cdots & 2m-1 & 2m & 2m+1\\\hline\hline
x_ix_{i+1} & q-1 & 1& q-3 & 3 & \cdots & q-2m+1 & 2m-1 & q-2m-1\\\hline
y_iy_{i+1} & q-2 & 2 & q-4 & 4 & \cdots & q-2m & 2m & q-2m-2 \\ \hline
\end{array}\]

Suppose $h=2k$ and $k\ge 1$. Now, we have $Sp(3,2m+1,2m+4k+1)$ and $q=4m+4k+5$. Let $Q_{2k+1}=z_1\cdots z_{2k+1}$ and $R_{2k+1}=v_1\cdots v_{2k+1}$. We label $Q_{2k+1}$ by in integers in $[2m+1,2m+k]\cup [2m+3k+3,2m+4k+2=q-2m-3]$:
\[\T{9}\begin{array}{c|*{9}{|c}|}
i & 1 & 2 & 3 & 4 & \cdots &2k-3 & 2k-2 & 2k-1 & 2k\\\hline\hline
z_iz_{i+1} & 2m+1 & q-2m-3 & 2m+2 & q-2m-4 & \cdots & 2m+k-1 & 2m+3k+4 & 2m+k & 2m+3k+3  \\ \hline
\end{array}\]

We label $R_{2k+1}$ by integers in $[2m+k+2,2m+3k+1]$:
\[\T{9}\begin{array}{c|*{9}{|c}|}
i & 1 & 2 & 3 & 4 & \cdots &2k-3 & 2k-2 & 2k-1 & 2k\\\hline\hline
v_iv_{i+1} & 2m+k+2 & 2m+3k+1 & 2m+k+3 & 2m+3k & \cdots & 2m+k-1 & 2m+2k+3 & 2m+2k+1 & 2m+2k+2  \\ \hline
\end{array}\]

Let $P_{2m+2h+2}=P^{(2)}_{2m+2}Q_{2k+1}R_{2k+1}$. Lastly, let $f(w_2w_3)=2m+3k+2$ and $f(w_3w_4)=2m+k+1$. It is easy to check that $f^+(u)=q+2m+3k+2  =f^+(w_2)$ and each other vertex has induced label in $\{q-2,q-1,q\}$. Thus, $f$ is a local antimagic 4-labeling.

Suppose $h=2k+1$ and $k\ge 1$. Now, we have $Sp(3,2m+1,2m+4k+3)$ and $q=4m+4k+7$.  Let $Q_{2k+3}=z_1\cdots z_{2k+3}$ and $R_{2k+1}=v_1\cdots v_{2k+1}$.  We label $Q_{2k+3}$ by in integers in $[2m+1,2m+k+1]\cup [2m+3k+4,2m+4k+4=q-2m-3]$:
\[\T{9}\begin{array}{c|*{9}{|c}|}
i & 1 & 2 & 3 & 4 & \cdots &2k-1 & 2k & 2k+1 & 2k+2\\\hline\hline
z_iz_{i+1} & 2m+1 & q-2m-3 & 2m+2 & q-2m-4 & \cdots & 2m+k & 2m+3k+5 & 2m+k+1 & 2m+3k+4\\ \hline
\end{array}\]

We label $R_{2k+1}$ by integers in $[2m+k+3,2m+3k+2]$:
\[\T{8}\begin{array}{c|*{9}{|c}|}
i & 1 & 2 & 3 & 4 & \cdots &2k-3 & 2k-2 & 2k-1 & 2k\\\hline\hline
v_iv_{i+1} & 2m+k+3 & 2m+3k+2 & 2m+k+4 & 2m+3k+1 & \cdots & 2m+k+1 & 2m+2k+4 & 2m+2k+2 & 2m+2k+3  \\ \hline
\end{array}\]

Let $P_{2m+2h+2}=P^{(1)}_{2m+2}Q_{2k+3}R_{2k+1}$. Lastly, let $f(w_2w_3)=2m+3k+3$ and $f(w_3w_4)=2m+k+2$. It is easy to check that $f^+(u)=q+2m+3k+3 =f^+(w_2)$ and each other vertex has induced label in $\{q-2,q-1,q\}$. Thus, $f$ is a local antimagic 4-labeling.

For $h=1$, let $P_{2m+4}=x_{1}\cdots x_{2m+4}$, $P_{2m+2} =y_{1}\cdots y_{2m+2}$ and $P_4=w_1w_2w_3w_4$. Now $q=4m+7$.
We label $P_{2m+4}$ and $P_{2m+2}$ as:
\[\T{8}\begin{array}{c|*{13}{|c}|}
i & 1 & 2 & 3 & 4 & 5 & 6 & 7 & \cdots & 2m-1& 2m & 2m+1 & 2m+2 & 2m+3\\\hline\hline
x_ix_{i+1} & 4m+7 & 4m+6 & 1& 2m+1 & 2m+6 & 2m-1 & 2m+8 & \cdots & \cdots & 5 & 4m+2 & 3 & 4m+4\\\hline
y_iy_{i+1} & 4m+5 & 2 & 4m+3 & 4 & 4m+1 & \cdots & \cdots &\cdots &2m+7 & 2m & 2m+5 & &\\\hline
\end{array}\]
Let $f(w_1w_2)=2m+2$, $f(w_2w_3)=2m+3$ and $f(w_3w_4)=2m+4$.
Then the induced vertex labels are $8m+13$, $4m+7$, $4m+5$ and $2m+2$. Thus, $f$ is a local antimagic 4-labeling.

For $h=0$, let $P_{2m+2}=x_{1}\cdots x_{2m+2}$, $Q_{2m+2} =y_{1}\cdots y_{2m+2}$ and $P_4=w_1w_2w_3w_4$.

We label $P_{2m+2}$ and $Q_{2m+2}$ by
\[\begin{array}{c|*{10}{|c}|}
i & 1 & 2 & 3 & 4 & 5 & \cdots & 2m-2& 2m-1 & 2m & 2m+1\\\hline\hline
x_ix_{i+1} & 4m+2 & 2 & 4m & 4 & 4m-2 &\cdots &  2m-2 & 2m+4 & 2m & 2m+2\\\hline
y_iy_{i+1} & 4m+5 & 4m+1 & 3 & 4m-1 & 5 & \cdots &  2m+5 & 2m-1 & 2m+3 & 2m+1\\\hline
\end{array}\]
Let $f(w_1w_2)=4m+4$, $f(w_2w_3)=1$ and $f(w_3w_4)=4m+3$. Thus, $f$ is a local antimagic 4-labeling.

Hence the proof is completed.
\end{proof}

\begin{theorem}\label{thm-5oo} For $h\ge 3$, $\chi_{la}(Sp(5,2m+1,2h+1))=4$. \end{theorem}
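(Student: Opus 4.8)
The lower bound $\chi_{la}(Sp(5,2m+1,2h+1))\ge 4$ is immediate from Theorem~\ref{thm-pendant}, since the spider has three pendant vertices and is not $K_2$; the whole task is therefore to exhibit, for every admissible pair, an explicit local antimagic $4$-labeling. Because the legs are unordered and $h\ge 3$ forces one non-trivial leg to have length $\ge 7$, I would first rename so that $2h+1$ is the longest leg (so $h\ge m\ge 1$ and $2h+1\ge 7$), and then write $2h+1=(2m+1)+2(h-m)$, exhibiting the long leg as a copy of the length-$(2m+1)$ pattern followed by $2(h-m)$ extra edges. Before the main work I would dispose of the degenerate configurations in which two legs coincide: if $m=h$ the graph is $Sp(5,2m+1,2m+1)$, and if $m=2$ it is $Sp(5,5,2h+1)=Sp(2h+1,5,5)$, and both are already settled by Theorem~\ref{thm-equalodd}. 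This leaves the generic range $h>m$, $m\ne 2$, to which the construction applies.

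For the generic case, set $q=2m+2h+7$ and build a bijection $f:E\to[1,q]$ following exactly the template of Theorem~\ref{thm-3oo}, with the length-$5$ leg now playing the distinguished role that $P_4$ played there. I would write the leg of length $2m+1$ as $P^{(1)}_{2m+2}$ and the long leg as $P^{(2)}_{2m+2}QR$, where $P^{(2)}$ carries a second copy of the same alternating pattern, and place on $P^{(1)},P^{(2)}$ the two interleaved high--low sequences $q-1,1,q-3,3,\dots$ and $q-2,2,q-4,4,\dots$, so that every degree-$2$ vertex of these two legs receives a colour in $\{q-2,q-1,q\}$. The remaining middle block of labels would be distributed over the extra edges $Q,R$ and the five edges of the length-$5$ leg by explicit interleaving tables, with Lemma~\ref{lem-path-4a} available to label the length-$5$ leg when a clean block $[a,a+4]$ is convenient. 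An attractive shortcut worth trying first is a label transfer in the spirit of the passage from Theorem~\ref{thm-Sp(2m+1,2m+3,4m-3)} to its successor: start from the Theorem~\ref{thm-3oo} labeling of $Sp(3,2m+1,2m+2h+1)$ and move two edge-labels from the long leg onto the length-$3$ leg, lengthening it to $5$ and shortening the long leg by two; rewriting the parameters should then recover the target family.

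The numerical heart of the argument is to choose the labels on the length-$5$ leg so that the core receives a single large colour $f^+(u)>q$ coinciding with the colour of exactly one non-adjacent degree-$2$ vertex $w_i$ of that leg, precisely as the coincidence $f^+(u)=f^+(w_2)$ was engineered in Theorem~\ref{thm-3oo}. Every other vertex, including all three pendants, then lands in $\{q-2,q-1,q\}$, so the palette is exactly $\{q-2,q-1,q,f^+(u)\}$ and $c(f)=4$. Verifying that $f$ is a bijection onto $[1,q]$ and that adjacent vertices differ would complete the upper bound, and together with the first paragraph this gives $\chi_{la}=4$.

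The main obstacle, as in every preceding three-odd-leg theorem, is the parity bookkeeping. I expect to branch on the parity of $h-m$, since this governs how the extra block of $2(h-m)$ edges in $Q,R$ must be interleaved and which end-labels are available to force the coincidence $f^+(u)=f^+(w_i)$, and to treat the smallest parameters --- $m=1$ and $h-m\in\{1,2\}$ --- as separate hand-built labelings, several of which can instead be folded back into Theorem~\ref{thm-3oo} or Theorem~\ref{thm-oddspecial} after rewriting the leg lengths. Checking in each branch that the assigned labels exhaust $[1,q]$ without repetition and that the engineered collapse genuinely holds is the delicate, computation-heavy part; the qualitative design above is what is meant to make those verifications routine rather than ad hoc.
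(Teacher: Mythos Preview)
Your proposal is a strategy outline, not a proof: you never actually write down a single labeling, and phrases like ``worth trying first'' and ``I expect to branch on the parity of $h-m$'' signal that the construction has not been carried out. The template you borrow from Theorem~\ref{thm-3oo} (two parallel high--low legs plus a distinguished short leg carrying the core-matching colour) is plausible, but until the tables for $Q$, $R$, and the length-$5$ leg are written explicitly and checked, there is no guarantee the labels exhaust $[1,q]$ or that the engineered coincidence $f^+(u)=f^+(w_i)$ can be forced simultaneously with the $\{q-2,q-1,q\}$ palette on every remaining degree-$2$ vertex. The label-transfer ``shortcut'' from Theorem~\ref{thm-3oo} is especially unverified: moving two labels from the long leg to extend the length-$3$ leg to length $5$ changes four vertex sums at once, and you have not shown those changes are compatible with a four-colour outcome.

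More importantly, the paper's proof is far simpler than anything you propose and involves no case analysis at all. It gives a single uniform labeling valid for every $m\ge 1$, $h\ge 3$: the length-$5$ leg receives $q-1,\,1,\,q-2,\,\tfrac{q+1}{2},\,\tfrac{q-1}{2}$; the leg of length $2m+1$ starts with $q$ and then alternates symmetrically about $\tfrac{q}{2}$ as $\tfrac{q-3}{2},\tfrac{q+3}{2},\tfrac{q-5}{2},\dots$; and the leg of length $2h+1$ is labeled $2,\,q-3,\,3,\,q-4,\dots$. The core, $x_4$, and $y_2$ all receive the single large colour $\tfrac{3q-3}{2}$, the pendant $z_1$ gets colour $2$, and every other vertex gets $q$ or $q-1$ --- four colours, no parity branching, no special small cases. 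Your decomposition $2h+1=(2m+1)+2(h-m)$, the reductions to Theorem~\ref{thm-equalodd}, and the anticipated parity split are all unnecessary once one sees that the ``middle block'' of labels can be placed on the \emph{second} leg rather than on an extension of the third.
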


\begin{proof} Let $P_6 = x_1\ldots x_6$, $P_{2m+2}=y_1\cdots y_{2m+2}$ and $P_{2h+2} = z_1z_2\cdots z_{2h+2}$ with $u=x_6=y_{12}=z_{2h+2}$. Now, $q=2m+2h+7$. Let $f$ be a required labeling.

The leg $P_6$ is labeled as follows.
\[\begin{array}{c|*{5}{|c}|}
i & 1 & 2 & 3 & 4 & 5 \\\hline\hline
x_ix_{i+1} & q-1 & 1 & q-2 & \frac{q+1}{2} & \frac{q-1}{2}  \\\hline
\end{array}\]
The leg $P_{2m+2}$ is labeled as follows.
\[\begin{array}{c|*{8}{|c}|}
j & 1 & 2 & 3 & 4 & 5 & \cdots & 2m & 2m+1 \\\hline\hline
y_jy_{j+1} & q & \frac{q-3}{2} & \frac{q+3}{2} & \frac{q-5}{2} & \frac{q+5}{2} & \cdots & \frac{q-1}{2}-m & \frac{q+1}{2}+m  \\\hline
\end{array}\]
The leg $P_{2h+2}$ is labeled as follows.
\[\begin{array}{c|*{7}{|c}|}
k & 1 & 2 & 3 & 4 & \cdots & 2h & 2h+1   \\\hline\hline
z_kz_{k+1} & 2 & q-3 & 3 & q-4 & \cdots & q-h-2 & h+2 \\\hline
\end{array}\]
Note that $h+2=\frac{q-1}{2}-m-1$ and $q-h-2=\frac{q+1}{2}+m+1$. So $f$ is a bijection. Now, $f^+(u) = f^+(x_4) = f^+(y_2) = \frac{3q-3}{2}$, $f^+(z_1) = 2$,  and $f^+(w)\in\{q, q-1\}$ for other vertex $w$. The theorem holds.
\end{proof}

\begin{example}%\hspace*{\fill}{}\\[-4mm]

%\nt\begin{minipage}[b]{8.5cm}
$Sp(5,9,11)$:\\
$\begin{array}{*{11}{c}}
24 & 1 & 23 & 13 & 12\\
25 & 11 & 14 & 10 & 15 & 9 & 16 & 8 & 17\\
2 & 22 & 3 & 21 & 4 & 20 & 5 & 19 & 6 & 18 & 7 \\
\end{array}$\\
(36, 25, 24, 2) \rsq
\end{example}

\begin{theorem}\label{thm-7oo} For $m,h\ge 1$, $\chi_{la}(Sp(7,2m+1,2h+1))=4$.   \end{theorem}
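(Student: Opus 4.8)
The goal is to exhibit an explicit local antimagic $4$-labeling; since the spider $Sp(7,2m+1,2h+1)$ has three pendant vertices, Theorem~\ref{thm-pendant} already gives $\chi_{la}\ge 4$, so only the upper bound needs work. Write $q=2m+2h+9$ for the size and note that $q$ is \emph{odd}, so the half-integers $(q\pm1)/2$ exploited in Theorem~\ref{thm-5oo} are again available. The plan is to follow the template of Theorems~\ref{thm-3oo} and~\ref{thm-5oo}: label the three legs $P_8=x_1\cdots x_8$, $P_{2m+2}=y_1\cdots y_{2m+2}$ and $P_{2h+2}=z_1\cdots z_{2h+2}$ (sharing the core $u=x_8=y_{2m+2}=z_{2h+2}$) by a bijection onto $[1,q]$ so that every degree-$2$ vertex receives one of the two consecutive colors $q-1,q$ or the single large value (around $(3q-3)/2$), the core receives that large value, and the three pendants contribute no colors beyond these except one small value $s$. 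Choosing the three core-incident labels so that $f^+(u)$ equals the large value then forces exactly the four colors $\{q-1,\,q,\,(3q-3)/2,\,s\}$.

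First I would dispose of the small cases by symmetry and by appeal to earlier results. When $m=1$ one leg has length $3$, and $Sp(7,3,2h+1)$ is, after taking the legs in a suitable order, an instance of $Sp(3,2a+1,2a+2b+1)$, hence covered by Theorem~\ref{thm-3oo} for every $h\ge 1$. When $m=2$ one leg has length $5$, and $Sp(7,5,2h+1)$ is covered by Theorem~\ref{thm-5oo} for $h\ge 3$, by Theorem~\ref{thm-3oo} when $h=1$ (the legs being $3,5,7$), and by Theorem~\ref{thm-equalodd} when $h=2$ (the legs being $7,5,5$). Using the symmetry $Sp(7,2m+1,2h+1)=Sp(7,2h+1,2m+1)$ the same arguments handle $h\le 2$, so it remains only to construct a labeling in the generic range $m,h\ge 3$.

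For that range I would label the two long odd legs with the alternating large/small zigzag of Theorem~\ref{thm-5oo}, so that consecutive edges sum into the band and the interior vertices alternate between the band colors while one pendant supplies the small color $s$; the labels near $q/2$ are then shared between a long leg and the length-$7$ leg. The length-$7$ leg is labeled explicitly, its seven labels being arranged so that (i) its six interior vertices land in the band, (ii) one interior vertex realizes the core color via a pair such as $\{q-2,(q+1)/2\}$ or $\{q-1,(q-1)/2\}$ summing to $(3q-3)/2$, and (iii) its core-incident edge, together with the core-incident edges of the two long legs, sums to that value. I expect a case split according to the parity of $m$ (mirroring the $h=2k$ versus $h=2k+1$ division in Theorem~\ref{thm-3oo}), with a separate displayed table of edge labels for each leg in each parity class; checking the induced colors is then routine.

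The hard part will be the bookkeeping of this distribution, and a literal two-edge extension of the length-$5$ leg of Theorem~\ref{thm-5oo} provably fails. Indeed, if the two long legs keep their Theorem~\ref{thm-5oo} patterns, the seven labels left for the short leg are exactly $\{1,\,h+3,\,(q-1)/2,\,(q+1)/2,\,q-h-3,\,q-2,\,q-1\}$, and the admissible-adjacency graph on this set (joining two labels iff their sum lies in $\{q-1,\,q,\,(3q-3)/2\}$) splits into the $5$-cycle $1,\,q-2,\,(q+1)/2,\,(q-1)/2,\,q-1$ together with the disjoint edge $\{h+3,\,q-h-3\}$; being disconnected, it admits no Hamiltonian path, so no ordering of these seven labels keeps every interior color in the band. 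Consequently the central labels must be reassigned --- for instance by letting the short leg carry a block of consecutive small labels and shifting the small/large labels of $P_{2h+2}$ accordingly --- and one must then re-verify simultaneously that the three label sets are disjoint and exhaust $[1,q]$ and that each interior sum still lands in the band. Producing a redistribution that works uniformly across the parity split is the crux; once a valid assignment is written down, the verification that $c(f)=4$ is a direct computation.
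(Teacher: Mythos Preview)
Your reduction of the small cases $m\le 2$ or $h\le 2$ to Theorems~\ref{thm-3oo}, \ref{thm-5oo} and~\ref{thm-equalodd} is valid, and the lower bound is of course immediate from Theorem~\ref{thm-pendant}. But for the generic range $m,h\ge 3$ you have not actually given a labeling: you correctly diagnose that simply grafting two more edges onto the length-$5$ leg of Theorem~\ref{thm-5oo} fails (your Hamiltonicity argument is nice), and you describe in words what a fix would have to look like, but you stop at ``Producing a redistribution that works uniformly across the parity split is the crux.'' That \emph{is} the crux, and without an explicit redistribution there is no proof. Nothing you have written guarantees that such a uniform scheme exists on your chosen color palette $\{q-1,\,q,\,(3q-3)/2,\,s\}$; indeed, the very obstruction you identified shows the problem is delicate.

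The paper sidesteps this difficulty entirely by abandoning the Theorem~\ref{thm-5oo} palette. It uses the four colors $\{q-2,\,q,\,2q-5,\,4\}$ and gives one explicit construction covering all $m,h\ge 1$ with no case split and no appeal to earlier theorems. The length-$7$ leg is labeled $q-2,\,2,\,q-4,\,q-1,\,1,\,3,\,q-3$ (so its interior sums are $q,\,q-2,\,2q-5,\,q,\,4,\,q$), the leg $P_{2m+2}$ carries the even labels in $[4,2m+4]$ alternating with the odd labels in $[q-2m-4,q-6]$, and $P_{2h+2}$ carries $q$ followed by the even labels in $[q-2h-3,q-5]$ alternating with the odd labels in $[5,2h+3]$. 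The small color $4$ arises both as the pendant label $f^+(y_1)$ and as the interior sum $1+3$ at $x_6$; the large color $2q-5$ arises at $x_4$, at $z_2$, and at the core. Thus the trick you were missing is a change of target colors, not a more elaborate redistribution within the old ones.
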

\begin{proof}
Let $P_8 = x_1\cdots x_8$, $P_{2m+2}=y_1\cdots y_{2m+2}$, $P_{2h+1} = z_1\cdots z_{2h+2}$, and $u=x_8=y_{2m+2}=z_{2h+2}$. Now, $q=2m+2h+9$. Let $f$ be a required labeling.\\
The leg $P_{8}$ is labeled as follows:
\[\begin{array}{c|*{7}{|c}|}
i & 1 & 2 & 3 & 4 & 5 & 6 & 7 \\\hline\hline
x_ix_{i+1} & q-2 & 2 & q-4 & q-1 & 1 & 3 & q-3 \\\hline
\end{array}\]
The leg $P_{2m+2}$ is labeled by using all even numbers in $[4, 2m+3]$ and all odd numbers in $[q-2m-4, q-6]=[2h+5]$ as follows:
\[\begin{array}{c|*{8}{|c}|}
j & 1 & 2 & 3 & 4 & 5 & \cdots & 2m & 2m+1 \\\hline\hline
y_iy_{i+1} & 4 & q-6 & 6 & q-8 & 8& \cdots &q-2m-4 & 2m+4 \\\hline
\end{array}\]
The leg $P_{2h+2}$ is labeled by using all odd numbers in $[5, 2h+3]\cup\{q\}$ and all even numbers in $[q-2h-3, q-5]=[2m+6, q-5]$ as follows:
\[\begin{array}{c|*{8}{|c}|}
i & 1 & 2 & 3 & 4 & 5 & \cdots & 2h & 2h+1 \\\hline\hline
z_iz_{i+1} & q & q-5 & 5 & q-7 & 7 & \cdots & q-2h-3 & 2h+3 \\\hline
\end{array}\]
Clearly, $f$ is a bijection. Now $f^+(u)=f^+(x_4)=f^+(z_2)=2q-5$, $f(x_6)=4=f^+(y_1)$, and $f^+(w)\in\{q, q-2\}$ for other vertex $w$. The theorem holds.
\end{proof}

\begin{theorem}\label{thm-sp(9,11,2m+1)} For $m\ge 2$, $\chi_{la}(Sp(9,11,2m+1))=4$. \end{theorem}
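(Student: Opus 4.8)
Since $Sp(9,11,2m+1)$ has exactly three pendant vertices and is not $K_2$, Theorem~\ref{thm-pendant} already gives $\chi_{la}(Sp(9,11,2m+1))\ge 4$, so the whole problem is to produce a local antimagic $4$-labeling. I would first dispose of the small cases by symmetry of the legs: $Sp(9,11,5)=Sp(5,9,11)$ and $Sp(9,11,7)=Sp(7,9,11)$ are covered by Theorems~\ref{thm-5oo} and~\ref{thm-7oo}, while $Sp(9,11,9)$ and $Sp(9,11,11)$ each have two equal legs and so fall under Theorem~\ref{thm-equalodd}. Hence it remains to treat $m\ge 6$, where the third leg, of length $2m+1\ge 13$, is the unique longest leg. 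Throughout set $q=2m+21$ and write the legs as $P_{10}=x_1\cdots x_{10}$, $P_{12}=y_1\cdots y_{12}$ and $P_{2m+2}=z_1\cdots z_{2m+2}$, with common core $u=x_{10}=y_{12}=z_{2m+2}$.

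My plan is to copy the colour pattern of Theorem~\ref{thm-7oo}, targeting the four colours $\{s,\,q-2,\,q,\,C\}$: here $q-2$ and $q$ are the two bulk values shared by almost all degree-$2$ vertices, $s$ is a single small value placed at one pendant, and $C=f^+(u)$ is hit a second time at the vertex adjacent to the pendant carrying the label $q$. I would put $f(z_1z_2)=q$, so that $z_1$ gets colour $q$ and $z_2$ gets colour $C=q+f(z_2z_3)$, and then label the rest of $P_{2m+2}$ by the alternating run $q,\;q-5,\;5,\;q-7,\;7,\dots$, in which each large label is immediately completed by the small label making the vertex sum $q$, so that $z_3,\dots,z_{2m+1}$ all lie in $\{q-2,q\}$ and the core edge $e_z:=f(z_{2m+1}z_{2m+2})$ receives a medium value growing linearly in $m$. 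As in Theorem~\ref{thm-7oo}, the labels are distributed among the legs according to parity rather than size, with the odd small labels and the labels $q-\text{odd}$ feeding the long leg and the complementary class reserved for the two short legs.

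The two fixed legs must then be labelled from the remaining labels so that every interior vertex of $P_{10}$ and $P_{12}$ stays in $\{q-2,q\}$, and exactly one pendant (say $x_1$) receives the distinguished colour $s$ while the other two pendants remain in the bulk set. Writing $e_9=f(x_9x_{10})$ and $e_{11}=f(y_{11}y_{12})$ for the two short core edges, the construction turns on the single identity
\[
f^+(u)=e_9+e_{11}+e_z=q+f(z_2z_3)=C,
\]
which forces the three core-incident labels to add to the prescribed value $C$. This is exactly where the fixed lengths $9$ and $11$ bite: the short-leg tables must be arranged so that $e_9+e_{11}$ has both the right value $C-e_z$ and the right parity, and I expect to need two versions of these tables according to $m\bmod 2$.

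Once the displayed identity is verified, only the four colours $\{s,q-2,q,C\}$ occur, and the adjacency check is routine, since along each leg the interior colours strictly alternate between $q-2$ and $q$, while $s$ sits at a pendant and $C$ only at the non-adjacent vertices $u$ and $z_2$. The step I would attack first, and expect to be the main obstacle, is the simultaneous design of the two short-leg tables: unlike Theorem~\ref{thm-7oo}, whose single fixed leg had the minimal length $7$ and could realise both $C$ and the exceptional colour $s$ by hand, here two fixed legs of the larger lengths $9$ and $11$ must jointly produce the colour $s$, the correct core-edge sum $e_9+e_{11}=C-e_z$, and no interior colour outside $\{q-2,q\}$, all from the one parity class left over after the long leg is filled.
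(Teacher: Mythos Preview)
Your reductions for $m\in\{2,3,4,5\}$ are clean and correct, and the lower bound via Theorem~\ref{thm-pendant} is fine. The problem is that for $m\ge 6$ you have written a strategy, not a proof. You yourself flag the gap: ``the step I would attack first, and expect to be the main obstacle, is the simultaneous design of the two short-leg tables''. But that design \emph{is} the theorem. Everything else in your outline---the long-leg run $q,\,q-5,\,5,\,q-7,\,7,\dots$, the bulk colours $q-2$ and $q$, the core identity $e_9+e_{11}+e_z=C$---is scaffolding borrowed from Theorem~\ref{thm-7oo}; none of it shows that the twenty leftover labels can actually be arranged on legs of lengths $9$ and $11$ so that every interior sum lands in $\{q-2,q\}$, one pendant gets $s$, the other two pendants fall into the bulk, and $e_9+e_{11}$ hits the forced value $C-e_z=2m+34$. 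Until those two tables are written down and checked (and your own remark about needing separate tables for the two parities of $m$ suggests you anticipate this is nontrivial), there is no proof.

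By contrast, the paper does not split off small $m$ at all: it writes down a single explicit labelling valid for every $m\ge 2$, with bulk colours $q-1$ and $q$ (not $q-2$ and $q$), placing $q$ on the leg of length $9$ rather than on the long leg, and achieving $f^+(u)=4m+35$ at three vertices $x_2$, $y_6$, $u$. The pendant colour is $m+14$. So even the target colour set is different from the one you are aiming at, and the paper's tables for the two short legs are not alternating runs of the Theorem~\ref{thm-7oo} type---they involve a few deliberate ``breaks'' (e.g.\ consecutive large labels $2m+18,\,2m+17$ on $P_{12}$) to make the core sum come out right. If you want to salvage your approach, you need to actually produce such tables; otherwise, adopt the paper's uniform construction.
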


\begin{proof} Let $f$ be an edge labeling of $Sp(9,11,2m+1)$. Let $P_{10}=x_1\cdots x_{10}$, $P_{12} = y_1\cdots y_{12}$ and $P_{2m+2} = z_1\cdots z_{2m+2}$. Let $x_{10} = y_{12} = z_{2m+2}$. Now, $q = 2m+21$. We label $P_{10}$, $P_{12}$ and $P_{2m+2}$ as:

{\T{8}
\noindent$\begin{array}{c||*{9}{c|}}
i & 1 & 2 & 3 & 4 & 5 & 6 & 7 & 8 & 9  \\\hline\hline
x_ix_{i+1} & 2m+21 & 2m+14 & 6 & 2m+15 & 5 & m+9 & m+12 & m+8 & m+13 \\\hline
\end{array} \\[1mm]
\begin{array}{c||*{11}{c|}}
i & 1 & 2 & 3 & 4 & 5 & 6 & 7 & 8 & 9 & 10 & 11 \\\hline\hline 
y_iy_{i+1} & 2m+20 & 1 & 2m+19 & 2 & 2m+18 & 2m+17 & 3 & m+11 & m+10 & 4 & 2m+16\\\hline
\end{array} \\[1mm]
\begin{array}{c||*{11}{c|}}
i & 1 & 2 & 3 & 4 & 5 & 6 & 7 & 8 & 9 & 10 & 11 \\\hline\hline 
z_iz_{i+1} & m+14 & m+7 & 7 & 2m+13 & 8 & 2m+12 & \cdots & m+16 & m+5 & m+15 & m+6\\\hline
\end{array}$}\\

Clearly $f$ is a bijection. Now, $f^+(x_2) = f^+(y_6) = f^+(u) = 4m+35$ and $f^+(w) \in \{m+14,2m+20,2m+21\}$  for other vertex $w$. Thus, $f$ is a required local antimagic 4-coloring.
\end{proof}

%{\red I fail to extend further. May be you can see a way.}

\begin{theorem}\label{thm_sp(13,2m+1,2n+1)} For $m,n\ge 2$, $\chi_{la}(Sp(13,2m+1,2n+1))=4$. \end{theorem}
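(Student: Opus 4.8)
The plan is to prove the equality by exhibiting a single explicit labeling, since both ends of the admissible interval are already pinned down. The graph $Sp(13,2m+1,2n+1)$ has exactly $d=3$ pendant vertices and is not $K_2$, so Theorem~\ref{thm-pendant} gives $\chi_{la}\ge 4$, while Corollary~\ref{cor-spider} gives $\chi_{la}\le 5$. Hence it suffices to construct one local antimagic labeling $f$ with $c(f)=4$. I would write $q=2m+2n+15$ for the size, name the fixed length-$13$ leg $P_{14}=x_1\cdots x_{14}$, the two variable legs $P_{2m+2}=y_1\cdots y_{2m+2}$ and $P_{2n+2}=z_1\cdots z_{2n+2}$, and set the core $u=x_{14}=y_{2m+2}=z_{2n+2}$. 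Note that $q$ is odd.

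Following the template already used in this section for one fixed short odd leg together with two variable odd legs (Theorems~\ref{thm-5oo} and~\ref{thm-7oo}), I would first label the two variable legs by the alternating ``high--low'' scheme: along each of the $y$- and $z$-legs, consecutive edge labels are drawn alternately from a high block and a low block, so that each interior degree-$2$ vertex receives an induced label lying in a small set such as $\{q-1,q\}$, while the pendant ends $y_1,z_1$ carry two of the largest labels. The labels used on these two legs should be disjoint and should consist of the extreme high labels together with the low labels, thereby leaving a contiguous middle band of labels free for the fixed leg. This is exactly the mechanism supplied by Lemma~\ref{lem-path-4a} and Corollary~\ref{cor-circ-perm}, which guarantee such alternating arrangements with prescribed end behaviour.

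Next I would hand-label the $13$ edges of the fixed leg $P_{14}$ by an explicit table drawn from the remaining middle band (values around $q/2$, together with a few reserved extremes such as $q$, $\tfrac{q\pm1}{2}$), arranged so that every interior vertex $x_2,\ldots,x_{13}$ falls into the same small colour set as the interior vertices of the variable legs, and so that the three edges incident to the core sum to one common large value $f^+(u)$. The design goal is that $f^+$ attain only the four values $\{f^+(u),\,q,\,q-1,\,c\}$, where $c$ is a single ``small'' pendant colour, exactly as happened in Theorem~\ref{thm-sp(9,11,2m+1)}, whose four colours were $\{4m+35,2m+21,2m+20,m+14\}$. Because $13$ is odd and fixed, this is a finite but delicate explicit construction; one then checks that the displayed tables define a bijection $f:E\to[1,q]$, and that adjacent vertices always receive distinct induced labels, which is automatic once the interior colours alternate between two values and the core colour is strictly the largest.

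I expect the main obstacle to be precisely the bespoke labeling of the $13$-edge leg: it must simultaneously (i) consume exactly the block of labels left unused by the two variable legs, (ii) keep all twelve of its interior induced labels inside the target two- or three-element colour set, and (iii) contribute the correct summand to $f^+(u)$ so that the core colour comes out independent of $m$ and $n$. Forcing these three requirements to hold at once, in a way robust across the parities of $m$ and $n$ (which may necessitate a small split into parity subcases, as in the earlier theorems of this section), is where the genuine work lies; once suitable tables are written down, the bijection and proper-coloring verifications reduce to routine checks on the resulting vertex sums.
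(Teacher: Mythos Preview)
Your proposal is not a proof but a plan that explicitly defers the only nontrivial step. You correctly observe that the lower and upper bounds reduce the problem to exhibiting one local antimagic $4$-labeling, and you correctly identify the general shape of such a construction. But you then write ``Because $13$ is odd and fixed, this is a finite but delicate explicit construction; one then checks that the displayed tables define a bijection\ldots'' without ever displaying any tables. The entire content of the theorem is precisely that such tables exist; announcing that one would write them down is not a proof.

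Moreover, several concrete choices in your plan do not match what actually works and would need to be revised before any table could be completed. You propose to put the largest labels at \emph{both} pendant ends $y_1,z_1$ and to draw the $13$ fixed-leg labels from a ``middle band''; the paper does the opposite. It places $q=2m+2n+15$ on the pendant edge of the $y$-leg but the small label $7$ on the pendant edge of the $z$-leg, and it uses the extreme labels $\{1,2,3,4,5,6\}\cup\{q-7,\ldots,q-1\}$ on the length-$13$ leg, arranged in a specific non-alternating pattern so that one interior vertex $x_7$ hits the core colour $f^+(u)=4m+4n+22$. The resulting four colours are $\{4m+4n+22,\,q,\,q-2,\,7\}$, not $\{f^+(u),q,q-1,c\}$ as you guessed, and no parity split on $m,n$ is needed. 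Neither Lemma~\ref{lem-path-4a} nor Corollary~\ref{cor-circ-perm} is invoked. So the gap is real: the bespoke length-$13$ arrangement is the whole argument, and your outline neither supplies it nor pins down a scheme that would produce it.
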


\begin{proof} Let $f$ be an edge labeling of $Sp(13,2m+1,2n+1)$. Let $P_{14} = x_1\cdots x_{14}$, $P_{2m+2}=y_1\cdots y_{2m+2}$ and $P_{2n+2} = z_1\cdots z_{2n+2}$. Let $x_{14} = y_{2m+2} = z_{2n+2}$. Now, $q = 2m+2n+15$. We label $P_{14}$, $P_{2m+2}$ and $P_{22+2}$ as:

{\T{8}
\noindent$\begin{array}{c||*{7}{c|}}
i & 1 & 2 & 3 & 4 & 5 & 6 & 7    \\\hline\hline
x_ix_{i+1} & 2m+2n+13 & 2 & 2m+2n+11 & 4 & 3 & 2m+2n+12 &  2m+2n+10  \\\hline
x_{i+7}x_{i+8} & 5 & 2m+2n+8 & 2m+2n+14 & 1 & 6 & 2m+2n+9 &\\\hline
\end{array}\\[1mm]
\begin{array}{c||*{8}{c|}}
i & 1 & 2 & 3 & 4 & 5 & 6 & 7 & 8 \\\hline\hline
y_iy_{i+1} & 2m+2n+15 & 2m+2n+7 & 8 & 2m+2n+5 & 10 & \cdots & 2n+9 & 2m+6 \\\hline
\end{array}\\[1mm]
\begin{array}{c||*{8}{c|}}
i & 1 & 2 & 3 & 4 & 5 & 6 & 7 & 8 \\\hline\hline
z_iz_{i+1} & 7 & 2m+2n+6 & 9 & 2m+2n+4 & \cdots & 2n+5 & 2m+8 & 2n+7  \\\hline
\end{array}$}\\

Clearly $f$ is a bijection. Now, $f^+(x_7) = f^+(u) = 4m+4n+22$ and $f^+(w) \in \{7,2m+2n+13,2m+2n+15\}$  for other vertex $w$. Thus, $f$ is a required local antimagic 4-coloring.
\end{proof}

\begin{example} $Sp(13,9,15)$\\
$\begin{array}{*{15}{c}}
35 & 2 & 33 & 4 & 3 & 34 & 32 & 5 & 30 & 36 & 1 & 6 & 31\\
37 & 29 & 8 & 27 & 10 & 25 & 12 & 23 & 14\\
7 & 28 & 9 & 26 & 11 & 24 & 13 & 22 & 15 & 20 & 17 & 18 & 19  & 16 & 21\\
\end{array}$\\
$(7,35,37,66)$
\end{example}

%\begin{conjecture} For $a,b,c\ge 1$, $\chi_{la}(Sp(a,b,c)) = 3$. \end{conjecture}

\begin{conjecture} For $d\ge 3$, $y_1, y_2,\ldots, y_d\ge 2$ and $d(d+1) \le 2(2q-1)$, $\chi_{la}(Sp(y_1,y_2,\ldots,y_d))=d+1$ except $Sp(2^{[n]},3^{[m]})$ for $(n,m)\in\{(4,0), (5,0), (6,0), (0,10), (1,8), (1,9), (2,7), (2,8), (3,5), (3,6), (4,4), (4,5), (5,3)\}$. \end{conjecture}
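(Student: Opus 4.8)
The lower bound costs nothing: a $d$-leg spider with every leg of length at least $2$ has exactly $d$ pendant vertices and is not $K_2$, so Theorem~\ref{thm-pendant} already gives $\chi_{la}\ge d+1$. The entire content of the conjecture is therefore to \emph{construct}, for each admissible multiset $(y_1,\dots,y_d)$ outside the exceptional list, a local antimagic labeling attaining exactly $d+1$ colours. The hypothesis $d(d+1)\le 2(2q-1)$ is precisely the negation of the obstruction behind Theorem~\ref{thm-legnum} (via Theorem~\ref{thm-maxdeg}): it guarantees that the forced minimum core value $f^+(u)\ge\tfrac12 d(d+1)$ does not exceed $2q-1$, the largest induced label a degree-$2$ vertex can carry. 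Thus in this regime the core colour is \emph{permitted} to coincide with an internal colour, and the plan is to build labelings realising that coincidence.

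I would organise the construction around a per-leg template. For all-even multisets the fundamental labeling already does the bookkeeping: by \eqref{eq-evenlegs} the core receives $f^+(u)=\tfrac12\sum_i(d-i+1)y_i$ while the pendant colours are the explicit values $q-\tfrac12\sum_{i=1}^{k}y_i$, so, as in Corollary~\ref{cor-evenlegs}, $\chi_{la}=d+1$ holds as soon as \emph{some} ordering of the legs makes the core colour equal to one of these pendant colours. For all-even legs the conjecture therefore reduces to a representability question: given the freedom of $d!$ orderings, does the linear form land on an admissible target? Corollary~\ref{cor-even+1odd} shows the same labeling tolerates one odd leg. For a general parity profile I would group the legs into their parity classes and label each leg, read as a path rooted at the core, by the alternating large--small pattern of Lemma~\ref{lem-path-4a} and Corollary~\ref{cor-circ-perm}, so that every internal degree-$2$ vertex receives one of only two or three consecutive colours, the pendant of the leg receives a single controllable colour, and the edge meeting the core carries a prescribed value; concatenating legs of a common parity into one long path — the $PQ$ device used throughout Section~3 — lets a whole parity class share one internal colour band.

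The design then has exactly three targets: (a) the $d$ pendant colours are pairwise distinct; (b) all internal degree-$2$ colours collapse into a fixed two- or three-element set near $q$; (c) the core colour $f^+(u)=\sum_{e\ni u}f(e)$ equals a colour already in use. Goals (a) and (b) are delivered leg-by-leg by the building-block lemmas; goal (c) is a single global linear constraint on the $d$ core-edge labels, which I would satisfy by redistributing labels between legs of a common parity and by choosing the ordering, exactly as the three-leg theorems (Theorems~\ref{thm-2parity}, \ref{thm-3even}, and \ref{thm-sp(2n+1,2m+1,n+m+3k+1)}--\ref{thm_sp(13,2m+1,2n+1)}) do for $d=3$. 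The expectation is that these three-leg constructions, together with the even-leg Corollaries~\ref{cor-evenlegs} and~\ref{cor-even+1odd}, serve as base cases and labeling primitives that glue into a uniform family covering every parity profile once there is enough room.

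The hard part is goals (a) and (c) together in the tight regime $d(d+1)\approx 2(2q-1)$. When the legs are long — so $q$ is comfortably larger than $\tfrac12 d(d+1)$ — there is ample slack to separate the $d$ pendant colours and still steer the core sum onto an existing colour, and I expect a single uniform construction to succeed for all such multisets. Near the boundary the legs are forced to be short, making the pendant colours distinct consumes almost all the small labels, and the residual rigidity is exactly what pushes the $Sp(2^{[n]},3^{[m]})$ cases up to $d+2$, as the dedicated analysis of those spiders shows. The conjecture thus splits into (i) proving that the uniform construction works once $q$ exceeds an explicit threshold depending on $d$, and (ii) a finite, machine-checkable verification of the finitely many remaining tight multisets. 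Showing that \emph{no other} sporadic exception hides among those boundary cases — i.e.\ that the listed $Sp(2^{[n]},3^{[m]})$ are the only obstructions — is the true crux, and is exactly why the statement is posed as a conjecture rather than a theorem.
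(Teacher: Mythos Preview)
The statement is a \emph{conjecture}: the paper offers no proof, only the preceding partial results for $d=3$ and for the all-even cases as supporting evidence. There is therefore nothing to compare your proposal against, and you have recognised this yourself in your final sentence.

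Your write-up is a fair diagnosis of why the conjecture is plausible and where the difficulty lies --- the lower bound is free from Theorem~\ref{thm-pendant}, the hypothesis $d(d+1)\le 2(2q-1)$ removes the obstruction of Theorem~\ref{thm-legnum}, and the three-leg constructions together with Corollaries~\ref{cor-evenlegs}--\ref{cor-even+1odd} are the natural building blocks. But none of the sketched steps is carried out: the claim that the per-leg templates ``glue into a uniform family covering every parity profile once there is enough room'' is exactly the missing construction, and the reduction to ``a finite, machine-checkable verification'' near the boundary presupposes an explicit threshold that you do not produce. So this is a proof \emph{outline}, not a proof, and you correctly flag it as such.
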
 %Each $d$-leg spider of size $q$ that satisfies $d(d+1) \le 2(2q-1)$ with each leg length at least 2 has $\chi_{la} = d+1$ except $Sp(2^{[n]},3^{[m]})$ for $(n,m)\in\{(4,0), (5,0), (6,0), (0,10), (1,8), (1,9), (2,7), (2,8), (3,5), (3,6), (4,4), (4,5), (5,3)\}$ 

\section{Appendix}

Following are local antimagic $(n+m+1)$-labelings of the spider $Sp(2^{[n]},3^{[m]})$, for some $(n,m)$. The numbers listed in the parenthesis are the induced vertex colors.

\small

\nt\begin{minipage}[b]{8.5cm}
$(n,m)=(0,3)$:\\\\
$\begin{array}{ccc}
8 & 1 & 7\\
6 & 2 & 4\\
9 & 5 & 3\end{array}$\\
(14, 9, 8, 6)
\end{minipage}
\begin{minipage}[b]{8.5cm}
$(n,m)=(0,4)$:\\
$\begin{array}{ccc}
12 & 5 & 4\\
11 & 6 & 3\\
10 & 7 & 2\\
9 & 8 & 1\end{array}$\\
(17, 12, 11, 10, 9)
\end{minipage}\\

\nt\begin{minipage}[b]{8.5cm}
$(n,m)=(0,5)$:\\\\
$\begin{array}{ccc}
15 & 6 & 5\\
14 & 7 & 4\\
13 & 8 & 3\\
12 & 9 & 2\\
11 & 10 & 1\end{array}$\\
(21, 15, 14, 13, 12, 11)
\end{minipage}
\begin{minipage}[b]{8.5cm}
$(n,m)=(0,6)$:\\
$\begin{array}{ccc}
18 & 8 & 7\\
17 & 9 & 6\\
16 & 10 & 5\\
15 & 11 & 4\\
14 & 12 & 3\\
2 & 13 & 1\end{array}$\\
(26, 18, 17, 16, 15, 14, 2)
\end{minipage}\\

\nt\begin{minipage}[b]{8.5cm}
$(n,m)=(0,7)$:\\
$\begin{array}{ccc}
21 & 9 & 8\\
20 & 10 & 7\\
19 & 11 & 5\\
18 & 12 & 4\\
17 & 13 & 3\\
16 & 14 & 2\\
6 & 15 & 1
\end{array}$\\
(30, 21, 20, 19, 18, 17, 16, 6)
\end{minipage}
\begin{minipage}[b]{8.5cm}
$(n,m)=(0,8)$:\\
$\begin{array}{ccc}
24 & 13 & 6\\
23 & 14 & 5\\
22 & 15 & 4\\
21 & 16 & 3\\
20 & 17 & 2\\
19 & 18 & 1\\
10 & 12 & 7\\
8 & 11 & 9
\end{array}$\\
(37, 24, 23, 22, 21, 20, 19, 10, 8)
\end{minipage}\\

\nt\begin{minipage}[b]{8.5cm}
$(n,m)=(0,9)$:\\
$\begin{array}{ccc}
27 & 18 & 9\\
26 & 19 & 8\\
25 & 20 & 7\\
24 & 21 & 6\\
23 & 22 & 5\\
17 & 10 & 4\\
16 & 11 & 3\\
15 & 12 & 2\\
14 & 13 & 1
\end{array}$\\
(45, 27, 26, 25, 24, 23, 17, 16, 15, 14)
\end{minipage}\\

\nt\begin{minipage}[b]{8.5cm}
$(n,m)=(1,2)$:\\
$\begin{array}{ccc}
6 & 2\\
7 & 1 & 5\\
8 & 3 & 4\end{array}$\\
(11, 8, 7, 6)
\end{minipage}
\nt\begin{minipage}[b]{8.5cm}
$(n,m)=(1,3)$:\\
$\begin{array}{ccc}
5 & 4\\
11 & 6 & 3\\
10 & 7 & 2\\
9 & 8 & 1\end{array}$\\
(17, 11, 10, 9, 5)
\end{minipage}
\\

\nt
\begin{minipage}[b]{8.5cm}
$(n,m)=(1,4)$:\\
$\begin{array}{ccc}
10 & 1\\
14 & 5 & 9\\
13 & 6 & 4\\
12 & 7 & 3\\
11 & 8 & 2
\end{array}$\\
(19, 14, 13, 12, 11, 10)
\end{minipage}
\begin{minipage}[b]{8.5cm}
$(n,m)=(1,5)$:\\
$\begin{array}{ccc}
8 & 7\\
17 & 9 & 6\\
16 & 10 & 5\\
15 & 11 & 4\\
14 & 12 & 3\\
2 & 13 & 1\end{array}$\\
(26, 17, 16, 15, 14, 8, 2)
\end{minipage}\\

\nt \begin{minipage}[b]{8.5cm}
$(n,m)=(1,6)$:\\
$\begin{array}{ccc}
7 & 8\\
20 & 9 & 6\\
19 & 10 & 5\\
18 & 11 & 4\\
17 & 12 & 3\\
16 & 13 & 2\\
15 & 14 & 1
\end{array}$\\
(29, 20, 19, 18, 17, 16 15, 7)
\end{minipage}
\begin{minipage}[b]{8.5cm}
$(n,m)=(1,7)$:\\
$\begin{array}{ccc}
13 & 6\\
23 & 14 & 5\\
22 & 15 & 4\\
21 & 16 & 3\\
20 & 17 & 2\\
19 & 18 & 1\\
10 & 12 & 7\\
8 & 11 & 9
\end{array}$\\
(37, 23, 22, 21, 20, 19, 13, 10, 8)
\end{minipage}
\begin{minipage}[b]{8.5cm}

\end{minipage}\\

\nt \begin{minipage}[b]{8.5cm}
$(n,m)=(2,1)$:\\
$\begin{array}{ccc}
6 & 1\\
5 & 2\\
7 & 4 & 3 \end{array}$\\
(11, 7, 6, 5)
\end{minipage}
\begin{minipage}[b]{8.5cm}
$(n,m)=(2,2)$:\\
$\begin{array}{ccc}
5 & 4\\
6 & 3\\
10 & 7 & 2\\
9 & 8 & 1\end{array}$\\
(17, 10, 9, 6, 5)
\end{minipage}\\

\nt \begin{minipage}[b]{8.5cm}
$(n,m)=(2,3)$:\\
$\begin{array}{ccc}
12 & 1\\
11 & 2\\
13 & 4 & 6\\
10 & 7 & 5\\
9 & 8 & 3\\
\end{array}$\\
(17, 13, 12, 11, 10 ,9)
\end{minipage}
\begin{minipage}[b]{8.5cm}
$(n,m)=(2,4)$:\\
$\begin{array}{ccc}
5 & 8\\
6 & 7\\
16 & 9 & 4\\
15 & 10 & 3\\
14 & 11 & 2\\
13 & 12 & 1
\end{array}$\\
(25, 16, 15, 14, 13, 6, 5)
\end{minipage}\\

\nt
\begin{minipage}[b]{8.5cm}
$(n,m)=(2,5)$:\\
$\begin{array}{ccc}
7 & 8\\
9 & 6\\
19 & 10 & 5\\
18 & 11 & 4\\
17 & 12 & 3\\
16 & 13 & 2\\
15 & 14 & 1
\end{array}$\\
(29, 19, 18, 17, 16 15, 9, 7)
\end{minipage} \begin{minipage}[b]{8.5cm}
$(n,m)=(2,6)$:\\
$\begin{array}{ccc}
13 & 6\\
14 & 5\\
22 & 15 & 4\\
21 & 16 & 3\\
20 & 17 & 2\\
19 & 18 & 1\\
10 & 12 & 7\\
8 & 11 & 9
\end{array}$\\
(37, 22, 21, 20, 19, 14, 13, 10, 8)
\end{minipage}\\

\nt \begin{minipage}[b]{8.5cm}
$(n,m)=(3,1)$:\\
$\begin{array}{ccc}
8 & 1\\
2 & 7\\
5 & 4\\
9 & 6 & 3
\end{array}$\\
(15, 9, 8, 5, 2)
\end{minipage}
\begin{minipage}[b]{8.5cm}
$(n,m)=(3,2)$:\\
$\begin{array}{ccc}
3 & 8\\
5 & 6\\
7 & 4\\
12 & 9 & 2\\
11 & 10 & 1
\end{array}$\\
(21, 12, 11, 7, 5, 3)
\end{minipage}\\

\nt \begin{minipage}[b]{8.5cm}
$(n,m)=(3,3)$:\\
$\begin{array}{ccc}
5 & 8\\
6 & 7\\
9 & 4\\
15 & 10 & 3\\
14 & 11 & 2\\
13 & 12 & 1
\end{array}$\\
(25, 15, 14, 13, 9, 6, 5)
\end{minipage}
\begin{minipage}[b]{8.5cm}
$(n,m)=(3,4)$:\\
$\begin{array}{ccc}
7 & 8\\
9 & 6\\
10 & 5\\
18 & 11 & 4\\
17 & 12 & 3\\
16 & 13 & 2\\
15 & 14 & 1
\end{array}$\\
(29, 18, 17, 16 15, 10, 9, 7)
\end{minipage}\\

\nt \begin{minipage}[b]{8.5cm}
$(n,m)=(4,1)$:\\
$\begin{array}{ccc}
3 & 8\\
5 & 6\\
7 & 4\\
9 & 2\\
11 & 10 & 1
\end{array}$\\
(21, 11, 9, 7, 5, 3)
\end{minipage}
\begin{minipage}[b]{8.5cm}
$(n,m)=(4,2)$:\\
$\begin{array}{ccc}
5 & 8\\
6 & 7\\
9 & 4\\
10 & 3\\
14 & 11 & 2\\
13 & 12 & 1
\end{array}$\\
(25, 14, 13, 10, 9, 6, 5)
\end{minipage}\\

\nt \begin{minipage}[b]{8.5cm}
$(n,m)=(4,3)$:\\
$\begin{array}{ccc}
7 & 8\\
9 & 6\\
10 & 5\\
11 & 4\\
17 & 12 & 3\\
16 & 13 & 2\\
15 & 14 & 1
\end{array}$\\
(29, 17, 16, 15, 11, 10, 9, 7)
\end{minipage}
\begin{minipage}[b]{8.5cm}
$(n,m)=(5,1)$:\\
$\begin{array}{ccc}
5 & 8\\
6 & 7\\
9 & 4\\
10 & 3\\
11 & 2\\
13 & 12 & 1\end{array}$\\
(25, 13, 11, 10, 9, 6, 5)
\end{minipage}\\

\nt \begin{minipage}[b]{8.5cm}
$(n,m)=(5,2)$:\\
$\begin{array}{ccc}
7 & 8\\
9 & 6\\
10 & 5\\
11 & 4\\
12 & 3\\
16 & 13 & 2\\
15 & 14 & 1
\end{array}$\\
(29, 16, 15, 12, 11, 10, 9, 7)
\end{minipage}
 \begin{minipage}[b]{8.5cm}
$(n,m)=(6,1)$:\\
$\begin{array}{ccc}
7 & 8\\
9 & 6\\
10 & 5\\
11 & 4\\
12 & 3\\
13 & 2\\
15 & 14 & 1
\end{array}$\\
(29, 15, 13, 12, 11, 10, 9, 7)
\end{minipage}

\end{document}